\documentclass[12pt,a4paper]{amsart}
\usepackage{amsmath}
\usepackage{amssymb}
\usepackage{constants}

\makeatletter


\textwidth=16.5cm
\oddsidemargin=-0.1cm
\evensidemargin=-0.1cm
\textheight=23cm
\topmargin=0cm

\parindent=1.5em

\theoremstyle{plain}
\newtheorem{theorem}
{\indent Theorem}[section]
\newtheorem{proposition}[theorem]
{\indent Proposition}
\newtheorem{lemma}[theorem]
{\indent Lemma}
\newtheorem{corollary}[theorem]
{\indent Corollary}

\theoremstyle{definition}

\newtheorem{remark}[theorem]
{\indent Remark}

\theoremstyle{plain}

\numberwithin{equation}{section}

\newconstantfamily{c}{symbol=c}
\newconstantfamily{K}{symbol=K}

\newcommand\al{\alpha}
\newcommand\Cls[2]{C^#1_x\cap C^#2_t}
\newcommand\dds{\frac{d}{ds}}
\newcommand\ddt{\frac{d}{dt}}
\newcommand\dl{\delta}
\newcommand\DD{\nabla}
\newcommand\Ds{\DD_{\!s}}
\newcommand\Dt{\DD_{\!t}}
\newcommand\Dx{\DD_{\!\!x}}
\newcommand\ep{\varepsilon}
\newcommand\gm{\gamma}
\newcommand\Gm{\Gamma}
\newcommand\wh{\widehat}
\newcommand\Ip[1]{\langle#1\rangle}
\newcommand\Ker{\operatorname{Ker}}

\newcommand\Lm{\Lambda}
\newcommand\nr[1]{\mathopen|#1\mathclose|}
\newcommand\Nr[1]{\mathopen\|#1\mathclose\|}
\newcommand\ol{\overline}
\newcommand\pd{\partial}
\newcommand\td{\widetilde}
\newcommand\te{\theta}

\allowdisplaybreaks

\begin{document}


\title[\sc Motion of elastic wire]
{%
Motion of an elastic wire with thickness\\
in a Riemannian manifold
}

\author[\sc N.~Koiso]
{Norihito Koiso}

\subjclass[2010]{%
Primary
35Q74; 
Secondary
53C44, 
53A04. 
}

\keywords{%
Elastic wire,
Evolution equation,
Dynamics,
}

\thanks{%
Supported by the Grant-in-Aid for the Scientific Research (C) Grant Number 26400069.
}

\address{%
Norihito KOISO\\
Department of Mathematics\\
Faculty of Science\\
Osaka University\\
Toyonaka, Osaka, 560-0043\\
JAPAN
}

\begin{abstract}
There are several types of equation of motion of elastic wires.
In this paper, we treat an equation taking account of the thickness of wire.
The equation was introduced by Caflisch and Maddocks on plane curves, and they proved the existence of solutions.
Koiso and Sugimoto generalized the result to any dimensional Euclidean space.
In this paper, we will prove the existence of solutions on any riemannian manifold.
\end{abstract}

\maketitle

\section
{Introduction and results}

An {\em elastic wire} is a closed arcwise parametrized curve $\gm:S^1\to{\bf R}^N$ endowed with {\em elastic energy}
\begin{equation}
U(\gm):=\Nr{\gm_{xx}}^2:=\int\nr{\gm_{xx}}^2\,dx.
\end{equation}
According to Hamilton's principle, an equation of motion of an elastic wire is determined by the potential energy $U(\gm)$ and other kinetic energy.
For a start, we may consider $F(\gm)=\Nr{\gm_t}^2$ as kinetic energy.
The equation of motion we get is
\begin{equation}
\gm_{tt}+\gm_{xxxx}+(\mu\gm_x)_x=0,\quad
\nr{\gm_x}^2=1.
\end{equation}
Here, $\mu=\mu(x,t)$ is an unknown function determined by the condition $\nr{\gm_x}^2=1$.
The principal part of the equation is the $1$-dimensional plate equation.

For this equation, R.~Courant and D.~Hilbert \cite{CH} introduced its linear version.
Recently, there are researches by the author \cite{K-PlE} (case of Euclidean space ${\bf R}^n$),
A.~Burchard and L.~E.~Thomas \cite{BT} (case of ${\bf R}^3$, uses Hasimoto transformation on the vortex filament equation),
\cite{K-PlM} (case of Riemann manifold $(M,g)$).

We may consider a parabolic equation by replacing Newton dynamics by gradient flow method.
\begin{equation}
\gm_{t}+\gm_{xxxx}+(\mu\gm_x)_x=0,\quad
\nr{\gm_x}^2=1.
\end{equation}
For this direction, there are researches by the author
\cite{K-PaE} (case of Euclidean space ${\bf R}^n$),
\cite{K-PaM} (case of Riemann manifold $(M,g)$).
There is another equation which preserves curve length but does not preserve arcwise parametrization: Y.~Wen \cite{W} (case of Euclidean space ${\bf R}^n$).

In this paper, we consider an elastic wire with thickness under Newton dynamics.
When we change direction of infinitesimal part of center line of an elastic wire with thickness, its surrounding material really moves.
Therefore, we add quantity $\Nr{\gm_{xt}}^2$ to the previous kinetic energy $\Nr{\gm_{t}}^2$, and we apply Hamilton's principle to new kinetic energy
$F(\gm)=\Nr{\gm_t}^2+\Nr{\gm_{xt}}^2$.
We get the equation of motion:
\begin{equation}
\label{eq:wave-eq-of-elastic-wire}
\gm_{tt}-\gm_{xttx}+\gm_{xxxx}+(\mu\gm_x)_x=0,\quad
\nr{\gm_x}^2=1.
\end{equation}
The principal part of the equation is a hyperbolic equation for $\gm_{xx}$.
This equation was introduced by R.~Caflish \& J.~Maddocks (1984) \cite{CM} in the case of Euclidean plane ${\bf R}^2$.
They proved the existence and the uniqueness of the solution to the equation.
To prove it, they converted the equation to a hyperbolic equation for $\theta$, where $\gm_x=:\xi=(\cos\theta,\sin\theta)$.
Hence, we cannot directly generalize the result to higher dimensional case.
The existence and uniqueness of the solution in the case of general dimensional case was given by N.~Koiso \& M.~Sugimoto (2010) \cite{KS}.

Because this equation is physically natural and is given by Hamilton's principle with a clear functional,
it seems to be reasonable that the equation can be generalized to Riemannian manifolds,
i.~e., the Riemannian version of the equation can be solved.
This perturbation of equation is mild, because the principal part is preserved.
At first sight the perturbation seems to be easy because hyperbolic equations are stable under modifying lower derivatives.
However, direct generalization is not possible, because the proof on Euclidean spaces ${\bf R}^N$ in \cite{KS} depends on converting the equation to an equation for $\xi:=\gm_x\in S^{N-1}$.

We consider equation \eqref{eq:wave-eq-of-elastic-wire} in a Riemannian manifold $(M,g)$, hereinafter.
From now on, curves $\gm$ are closed curves$:S^1={\bf R}/{\bf Z}\to M$.
By rescaling the metric $g$ of the Riemannian manifold $M$,
we may assume that the initial curve is of length $1$ and arcwise parametrized.
We denote by $\Ip{*,*}$ the $L_2$ inner product for $x$ direction,
and by $\Nr*$ the $L_2$ norm.
I.~e., $\Nr{*}^2=\Ip{*,*}$.
We also use $L_2$ norm for pointwise norm.
That is, the norms of $v=(v^i)$, $a=(a^j{}_i)$ and $b=(b_i{}^k{}_j)$ are defined by
$\nr v^2=\sum_i(v^i)^2$,
$\nr a^2=\sum_{i,j}(a^j{}_i)^2$
and $\nr b^2=\sum_{i,j,k}(b_i{}^k{}_j)^2$,
respectively, in orthonormal systems.

For potential energy $U(\gm)=\Nr{\Dx\gm_x}^2$ and kinetic energy $F(\gm)=\Nr{\Dt\gm_x}^2+\Nr{\gm_t}^2$,
the first variation of the action integral becomes
\begin{equation}
\begin{aligned}
\frac12\frac{d}{ds}\int_0^TF(\gm)-U(\gm)\,dt
&=\frac12\frac{d}{ds}\int_0^T\Nr{\Dt\gm_x}^2+\Nr{\gm_t}^2-\Nr{\Dx\gm_x}^2\,dt\\
&=\int_0^T\Ip{-\Dt\gm_t+\Dx\Dt^2\gm_x-\Dx^3\gm_x+\Psi,\gm_s}\,dt,\\
\end{aligned}
\end{equation}
where $\Psi:=R(\gm_x,\Dx\gm_x)\gm_x-R(\gm_x,\Dt\gm_x)\gm_t$.

From this, we will see that the equation of motion becomes
\begin{equation}
\label{eq:base-single}
-\Dt\gm_t+\Dx\Dt^2\gm_x-\Dx^3\gm_x+\Psi=\Dx(\mu\gm_x),
\quad \nr{\gm_x}^2=1.
\end{equation}
Here, $\mu=\mu(x,t)$ is the Lagrange's multiplier.
The constrained condition for the initial data $\{\gm(x,0),\gm_t(x,0)\}$ is $\nr{\gm_x(x,0)}^2=1$ and $g(\Dx\gm_t(x,0),\gm_x(x,0))=0$.
We call this equation {\em the wave equation of motion of elastic wire}, or simply the equation of motion in this paper.

\begin{theorem}[Theorem \ref{theorem:long-time-C3-existence}]
The equation of motion \eqref{eq:base-single} has a unique short-time solution for any initial data $\{\gm(x,0),\gm_t(x,0)\}$ which is not a geodesic and satisfies the constrained condition.
The solution extends to an infinite time solution, if it stays away from geodesics.
\end{theorem}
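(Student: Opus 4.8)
The plan is to eliminate both the constraint and the Lagrange multiplier $\mu$, converting \eqref{eq:base-single} into a single nonlocal quasilinear equation of the hyperbolic type named in the introduction, and then to run a standard energy argument in the spaces $\Cls{k}{l}$. For the reduction, note that $\Dt\gm_x=\Dx\gm_t$ and $\Dt\Dx=\Dx\Dt+R(\gm_t,\gm_x)$ give $\Dx\Dt^2\gm_x=\Dx^2\Dt\gm_t+\Dx\big(R(\gm_t,\gm_x)\gm_t\big)$, so \eqref{eq:base-single} becomes
\begin{equation*}
(1-\Dx^2)\Dt\gm_t=-\Dx^3\gm_x+\Dx\big(R(\gm_t,\gm_x)\gm_t\big)+\Psi-\Dx(\mu\gm_x).
\end{equation*}
The operator $1-\Dx^2$ on sections over $S^1$ is positive and elliptic, hence invertible, and $(1-\Dx^2)^{-1}\Dx^2=(1-\Dx^2)^{-1}-1$; applying $(1-\Dx^2)^{-1}$ therefore expresses $\Dt\gm_t$ with principal part $-(1-\Dx^2)^{-1}\Dx^3\gm_x=\Dx\gm_x-(1-\Dx^2)^{-1}\Dx\gm_x$, of the same order as $\gm_{xx}$, the nonlocal factor $(1-\Dx^2)^{-1}$ acting only by smoothing and $R,\Psi$ contributing lower order terms.

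To determine $\mu$, differentiate $\nr{\gm_x}^2=1$ twice in $t$: $g(\Dt^2\gm_x,\gm_x)=-\nr{\Dt\gm_x}^2=-\nr{\Dx\gm_t}^2$, and since $\Dt^2\gm_x=\Dx\Dt\gm_t+R(\gm_t,\gm_x)\gm_t$ this reads $g(\Dx\Dt\gm_t,\gm_x)=-\nr{\Dx\gm_t}^2-g(R(\gm_t,\gm_x)\gm_t,\gm_x)$. Substituting $\Dt\gm_t$ from the previous step and applying $\Dx$, the only $\mu$-dependent term is $-(1-\Dx^2)^{-1}\Dx^2(\mu\gm_x)=\mu\gm_x-(1-\Dx^2)^{-1}(\mu\gm_x)$; pairing with $\gm_x$ and using $\nr{\gm_x}^2=1$ yields
\begin{equation*}
L\mu:=\mu-g\big((1-\Dx^2)^{-1}(\mu\gm_x),\gm_x\big)=\Phi[\gm,\gm_t],
\end{equation*}
where $\Phi$ is explicit in $\gm,\gm_t$ and finitely many $x$-derivatives (the apparent order three term $g(\Dx^2\gm_x,\gm_x)$ being reduced to $-\nr{\Dx\gm_x}^2$ by the constraint). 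The map $L=1-M^{*}(1-\Dx^2)^{-1}M$ with $M\mu=\mu\gm_x$ is a compact self-adjoint perturbation of the identity, and, since $0<(1-\Dx^2)^{-1}\le1$ and $\Nr{\mu\gm_x}=\Nr\mu$,
\begin{equation*}
\Ip{L\mu,\mu}=\Nr{\mu\gm_x}^2-\Ip{(1-\Dx^2)^{-1}(\mu\gm_x),\mu\gm_x}=\Nr{(1-\Dx^2)^{-1/2}\Dx(\mu\gm_x)}^2\ge0.
\end{equation*}
Equality forces $\Dx(\mu\gm_x)=0$; pairing with $\gm_x$ and using $g(\Dx\gm_x,\gm_x)=\tfrac12\pd_x\nr{\gm_x}^2=0$ gives $\mu_x=0$ and then $\mu\Dx\gm_x=0$, so $\mu$ is constant, and it vanishes unless $\gm$ is a geodesic. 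Thus for a non-geodesic $\gm$ the operator $L$, being Fredholm of index zero with trivial kernel, is invertible; $\mu=L^{-1}\Phi[\gm,\gm_t]$ depends smoothly on $\gm,\gm_t$, and $\Nr{L^{-1}}$ stays bounded as long as $\gm$ keeps a definite distance from the geodesics.

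Inserting $\mu=L^{-1}\Phi[\gm,\gm_t]$ into the equation of the first step yields a closed quasilinear equation $\Dt\gm_t=\mathcal F[\gm,\gm_t]$. I would solve it by the standard scheme: construct approximate solutions by freezing the top order coefficients at the previous iterate and solving the resulting linear hyperbolic equations (or by parabolic regularization), prove energy estimates in $\Cls{k}{l}$ uniform in the approximation parameter, and pass to the limit; this produces the short-time solution, with existence time controlled from below by the $\Cls{k}{l}$ norm of the data and its distance from the geodesics. Uniqueness follows from a Gronwall estimate for the difference of two solutions. For the constraint, set $\phi:=\nr{\gm_x}^2-1$; since $\mu$ was chosen precisely so that $g(\Dt^2\gm_x,\gm_x)+\nr{\Dx\gm_t}^2$ vanishes when $\phi\equiv0$, along a solution of the reduced equation $\phi$ satisfies a linear homogeneous second order (in $t$) equation whose coefficients involve $\phi$ and its $x$-derivatives, with $\phi(\cdot,0)=0$ and $\phi_t(\cdot,0)=2g(\Dx\gm_t(\cdot,0),\gm_x(\cdot,0))=0$ by the compatibility condition on the initial data; uniqueness then forces $\phi\equiv0$, so the reduced solution solves \eqref{eq:base-single}.

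For the global statement, the continuation principle says the solution extends past a time $T$ unless its $\Cls{k}{l}$ norm blows up or $\gm(\cdot,t)$ approaches a geodesic as $t\uparrow T$. The equation conserves the energy $\Nr{\gm_t}^2+\Nr{\Dx\gm_t}^2+\Nr{\Dx\gm_x}^2$, which together with $\nr{\gm_x}^2=1$ controls $\gm_x,\gm_t$ up to one $x$-derivative in $L_2$ for all time (and, if $M$ is complete, keeps the curve in a fixed compact set). If $\gm$ stays a definite distance from the geodesics on $[0,T)$, then $\Nr{L^{-1}}$ is bounded there, and differentiating the equation in $x$ and applying Gronwall to the higher order energies propagates the $\Cls{k}{l}$ bound to $[0,T)$; hence the solution continues, and is global as long as it stays away from geodesics. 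I expect the energy estimate for $\Dt\gm_t=\mathcal F[\gm,\gm_t]$ to be the main obstacle: the term $-(1-\Dx^2)^{-1}\Dx(\mu\gm_x)$ is nonlocal and of the same borderline order as the hyperbolic principal part, so it cannot be dismissed as a perturbation, and closing the estimates requires exploiting its tangential structure (it is a multiple of $\gm_x$ modulo genuinely lower order terms) together with the constraint — in effect estimating the coupled system for $(\gm,\mu)$ with a well-chosen energy. This is what the changes of unknown to $\theta$ and to $\xi=\gm_x\in S^{N-1}$ accomplished in the Euclidean proofs of \cite{CM} and \cite{KS}, and what has to be done intrinsically here.
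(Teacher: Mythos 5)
Your reduction is genuinely different from the paper's, and its central algebraic observation is correct and attractive: determining $\mu$ from the twice-differentiated constraint leads to the scalar operator $L=1-M^{*}(1-\Dx^2)^{-1}M$, your identity $\Ip{L\mu,\mu}=\Nr{(1-\Dx^2)^{-1/2}\Dx(\mu\gm_x)}^2$ is right, and the conclusion that $\Ker L=0$ exactly when $\gm$ is not a geodesic is the same phenomenon the paper captures by showing $\Ker(-D_x^2+{}^\perp)=0$ when $B(\gm,\xi)>0$. The paper, however, never writes $\mu$ at all: it introduces $\te=\Dt^2\xi-\Dx^2\xi-\mu\xi-\xi$, obtains $\te$ from the elliptic ODE $-D_x^2\te+\te^\perp=D_x\Psi+\Phi$, and then feeds $\te^\perp$ as the \emph{bounded} source of a semilinear wave equation for $\xi=\gm_x$ alone, coupled to ODEs for $\eta$ and $\gm$. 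That choice is not cosmetic; it is what makes the fixed-point argument close, because the only quantity that must be controlled pointwise at top order is $\te$, and Lemma \ref{lemma:O-te-base} bounds $m_0(\te)$ by $L_2$ data with a constant depending only on $B(\gm,\xi)^{-1}$.

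The two genuine gaps in your plan are precisely the places where this structure does the work. First, ``stays a definite distance from the geodesics'' is never made quantitative: you need a functional, continuous in the topology in which your iterates converge, whose positivity is equivalent to non-geodesicity and which controls $\Nr{L^{-1}}$ uniformly --- both for the short-time contraction (the iterates must not drift toward a geodesic) and for the continuation criterion in the long-time statement. This is exactly the role of the curve bentness $B(\gm,\xi)$ and Lemma \ref{lemma:B(gm,xi)-estimate}; without an analogue, the hypothesis of the theorem ($\inf_{0\le t<T}B(\gm)>0$) cannot even be stated for your scheme. Second, and more seriously, you defer the energy estimate for $\Dt\gm_t=\mathcal F[\gm,\gm_t]$, noting yourself that the $\mu$-term is borderline and must be handled through its tangential structure. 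That deferral is the proof: $\mu=L^{-1}\Phi[\gm,\gm_t]$ contains $\nr{\Dx\gm_t}^2$ and $\nr{\Dx\gm_x}^2$, which the conserved energy controls only in $L_1(x)$, while the Gronwall/contraction argument needs $L_\infty(x)$ control of the coefficients; the paper obtains exactly this pointwise control by propagating $m_0(\Dx\xi)+m_0(\Dt\xi)$ along characteristics ($\wh\xi^\pm$ in Lemma \ref{lemma:m0(xix)m0(xit)-bounded}) with the $L_\infty$ bound on $\te$ as input. Until you supply (i) a quantitative bentness functional bounding $\Nr{L^{-1}}$ and its Lipschitz dependence on $\gm$, and (ii) a closed pointwise estimate for the top-order quantities entering $\mu$, the proposal is an outline of an alternative route rather than a proof of either the short-time or the long-time statement.
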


Here, the condition ``the initial data is not a geodesic'' and ``the solution stays away from geodesics'' are natural on our setting.
For example, if the initial curve $\gm(x,0)$ is an isolated shortest geodesic,
any vector field $\eta(x)$ orthogonal to $\gm_x(x,0)$ satisfies the constrained condition as initial velocity,
but only translation of parameter $x$ is allowed as the solution.
And, because geodesics are singular points of the space of closed curves of given length,
it is difficult to analyze behaviour of curves around geodesics.

This paper is constructed as follows.
First, we derive the equation of motion \eqref{eq:base-single} in section \ref{section:derivation-equation},
and decompose it to a coupled system \eqref{eq:base-coupled} of equations in vector fields $\xi=\gm_x$, $\eta=\gm_t$ and $\te$.
\begin{equation*}
\left\{
\begin{aligned}
{\rm(O_\te)}\quad
&-D_x^2\te+\te^\perp
=D_x\Psi+\Phi,\\
{\rm(W_\xi)}\quad
&(D_t^2\xi-D_x^2\xi)^\perp=\te^\perp,
\quad \nr{\xi}^2=1,\\
{\rm(O_\eta)}\quad
&D_t\eta=D_x\te+\Psi+D_x\xi,\\
{\rm(O_\gm)}\quad
&\gm_t=\eta.
\end{aligned}
\right.
\end{equation*}
Here, $D_x$ and $D_t$ are differential operators approximating $\Dx$ and $\Dt$ respectively,
$\Psi$ and $\Phi$ are expressed by $\xi$ and $\eta$,
and $*^\perp$ is the orthogonal part to $\xi$.
We call this system {\em the coupled system of equations of motion of elastic wire}, or simply the coupled system in this paper.

Next, we prove existence of short-time solutions to the coupled system \eqref{eq:base-coupled} in section \ref{section:short-time-exisistence}.
For that, we need to analyze each decomposed equation.
This part is most important in this paper.
We will solve these equations in function spaces $\{\gm\mid\gm_t\in C^0\}$, $\{\eta\mid\eta_t\in C^0\}$, $\{\xi\in C^1\}$ and $\{\te\mid\te_x\in C^0\}$.

In subsection \ref{subsection:O-eta+O-gm}, we analyze equations $\rm(O_\eta)$ and $\rm(O_\gm)$.
They are simple ordinal differential equations, but their estimation gives idea of estimation of equation $\rm(W_\xi)$.
In subsection \ref{subsection:W-xi}, we analyze equation $\rm(W_\xi)$.
Equation $\rm(W_\xi)$ is converted to a standard semilinear wave integral equation.
However, since we don't assume that $\gm_x$ is continuous, we need careful treatment of it.
In subsection \ref{subsection:Bgmxi}, we define {\em curve bentness} $B(\gm)$ and show some basic properties.
The curve bentness guarantees that $\gm$ is not a geodesic in $H^1$ topology.
In subsection \ref{subsection:O-te}, we analyze equation $\rm(O_\te)$ using $B(\gm)$.
This part reflects geometry of the Riemannian metric $g$.
In subsection \ref{subsection:short-time}, we prove the short-time existence of solutions to the coupled system \eqref{eq:base-coupled} (Proposition \ref{proposition:short-time-C2-existence}).
The solution will give a solution $\gm\in C^3$ to the equation of motion \eqref{eq:base-single} (Proposition \ref{proposition:short-time-C3-exsitence}).

In conclusion, we prove the long-time existence of solutions (Theorem \ref{theorem:long-time-C3-existence}) in section \ref{section:long-time-existence}.
We also prove that the solution is of class $C^\infty$ if the initial data is of class $C^\infty(x)$ (Theorem \ref{theorem:long-time-Cinfty-existence}).

\section
{Derivation and decomposition of equation}
\label{section:derivation-equation}

We describe derivation of equation in details.
Let $\gm=\gm(x,t,s)$ be a variation of a motion $\gm(x,t)$ of a curve.
The variational vector field is $\gm_s$.
The first variation of the Hamilton's functional becomes as follows.
\begin{equation}
\begin{aligned}
\frac12\int_0^T(F(\gm)-U(\gm))\,dt
&=\frac12\frac{d}{ds}\int_0^T\Nr{\Dt\gm_x}^2+\Nr{\gm_t}^2-\Nr{\Dx\gm_x}^2\,dt\\
&=\int_0^T\Ip{\Dt\gm_x,\Ds\Dt\gm_x}+\Ip{\gm_t,\Ds\gm_t}
	-\Ip{\Dx\gm_x,\Ds\Dx\gm_x}\,dt.
\end{aligned}
\end{equation}
Since each term is,
\begin{align}
&\begin{aligned}
&\int_0^T\Ip{\Dt\gm_x,\Ds\Dt\gm_x}\,dt
=\int_0^T\Ip{\Dt\gm_x,\Dt\Ds\gm_x+R(\gm_s,\gm_t)\gm_x}\,dt\\
&\qquad=\int_0^T-\Ip{\Dt^2\gm_x,\Dx\gm_s}+\Ip{R(\gm_x,\Dt\gm_x)\gm_t,\gm_s}\,dt\\
&\qquad=\int_0^T\Ip{\Dx\Dt^2\gm_x,\gm_s}-\Ip{R(\gm_x,\Dt\gm_x)\gm_t,\gm_s}\,dt,
\end{aligned}\\
&\int_0^T\Ip{\gm_t,\Ds\gm_t}\,dt
=\int_0^T\Ip{\gm_t,\Dt\gm_s}\,dt
=\int_0^T-\Ip{\Dt\gm_t,\gm_s}\,dt,\\
&\begin{aligned}
&-\Ip{\Dx\gm_x,\Ds\Dx\gm_x}
=-\Ip{\Dx\gm_x,\Dx\Ds\gm_x+R(\gm_s,\gm_x)\gm_x}\\
&\qquad=\Ip{\Dx^2\gm_x,\Dx\gm_s}+\Ip{R(\gm_x,\Dx\gm_x)\gm_x,\gm_s}
=-\Ip{\Dx^3\gm_x,\gm_s}+\Ip{R(\gm_x,\Dx\gm_x)\gm_x,\gm_s}.
\end{aligned}
\end{align}
Summing up them, we see that the first variation is
\begin{equation}
2\int_0^T\Ip{-\Dt\gm_t+\Dx\Dt^2\gm_x-\Dx^3\gm_x+\Psi,\gm_s}\,dt.
\end{equation}

By Hamilton's principle,
the first variation have to vanish for any variational vector field $\gm_s$ satisfying
$0=\pd_s(\nr{\gm_x}^2)=2g(\gm_x,\Ds\gm_x)=2g(\gm_x,\Dx\gm_s)$.
It is equivalent to that,
the first variation vanishes if
\begin{equation}
0=-\int_0^T\int\mu g(\gm_x,\Dx\gm_s)\,dxdt=\int_0^T\Ip{\Dx(\mu\gm_x),\gm_s}\,dt
\end{equation}
for all functions $\mu=\mu(x,t)$.
In other words,
using $q:=-\Dt\gm_t+\Dx\Dt^2\gm_x-\Dx^3\gm_x+\Psi$ and $X:=$ the space $\{\Dx(\mu\gm_x)\}$,
it holds that $\gm_s\perp q$ if $\gm_s\perp X$,
with respect to the $L_2$-inner product on $S^1\times[0,T]$.
Therefore,
$q$ belongs to $X$.
That is, the equation \eqref{eq:base-single} holds.

We cannot directly obtain a priori estimate of $\mu$ in the form \eqref{eq:base-single}.
Instead, we will decompose \eqref{eq:base-single} and eliminate $\mu$.
Letting $\xi:=\gm_x$, we get
\begin{equation}
\begin{aligned}
&-\Dt\gm_t+\Dx\Dt^2\xi-\Dx^3\xi+\Psi=\Dx(\mu\xi),\\
&\Dx(\Dt^2\xi-\Dx^2\xi-\mu\xi)=\Dt\gm_t-\Psi.
\end{aligned}
\end{equation}
We rewrite it to $\Dx\al=\Dt\gm_t-\Psi$ using $\al:=\Dt^2\xi-\Dx^2\xi-\mu\xi$.
To express the $\gm_t$ by $\xi$, we differentiate both sides with respect to $x$.
\begin{equation}
\begin{aligned}
\Dx^2\al&=\Dx\Dt\gm_t-\Dx\Psi
=\Dt\Dx\gm_t+R(\gm_x,\gm_t)\gm_t-\Dx\Psi\\
&=\Dt^2\gm_x+R(\gm_x,\gm_t)\gm_t-\Dx\Psi
=\Dt^2\xi+R(\gm_x,\gm_t)\gm_t-\Dx\Psi.
\end{aligned}
\end{equation}

The definition of $\al$ implies that $(\Dt^2\xi-\Dx^2\xi)^\perp=\al^\perp$.
On the other hand, from $\nr{\xi}^2\equiv1$, we have
$g(\xi,\Dt\xi)=0$,
$g(\xi,\Dt^2\xi)=\pd_t(g(\xi,\Dt\xi))\allowbreak-\nr{\Dt\xi}^2=-\nr{\Dt\xi}^2$.
Similarly we have $g(\xi,\Dx\xi)=0$ and $g(\xi,\Dx^2\xi)=-\nr{\Dx\xi}^2$.
Hence, $\xi$-component of $\Dt^2\xi-\Dx^2\xi$ becomes
\begin{equation}
(\nr{\Dx\xi}^2-\nr{\Dt\xi}^2)\xi
=(\Dt^2\xi-\Dx^2\xi)-(\Dt^2\xi-\Dx^2\xi)^\perp
=\Dx^2(\al-\xi)-R(\gm_x,\gm_t)\gm_t+\Dx\Psi-\al^\perp.
\end{equation}
Therefore, by letting $\te:=\al-\xi$, we have
\begin{equation}
\Dx^2\te
=\te^\perp-(\nr{\Dt\xi}^2-\nr{\Dx\xi}^2)\xi+R(\xi,\gm_t)\gm_t-\Dx\Psi.
\end{equation}

Noting that $\Dt\gm_t=\Dx\al+\Psi=\Dx\te+\Dx\xi+\Psi$,
and introducing
$\eta:=\gm_t$,
we collect equations for
$\te$, $\xi$, $\eta$ and $\gm$
as follows.
\begin{equation}
\label{eq:base-coupled-pre}
\begin{aligned}
&\left\{
\begin{aligned}
&-\Dx^2\te+\te^\perp=\Dx\Psi+\Phi,\\
&(\Dt^2\xi-\Dx^2\xi)^\perp=\te^\perp,
\quad\nr\xi^2=1,\\
&\Dt\eta=\Dx\te+\Psi+\Dx\xi,\\
&\gm_t=\eta,\\
\end{aligned}
\right.\\
&\qquad\quad\Psi=R(\xi,\Dx\xi)\xi-R(\xi,\Dt\xi)\eta,
\quad\Phi=(\nr{\Dt\xi}^2-\nr{\Dx\xi}^2)\xi-R(\xi,\eta)\eta,\\
&\qquad\quad*^\perp:=*-g(*,\xi)\xi.
\end{aligned}
\end{equation}

Next, we give the coordinate expression of this system.
Let $\{e_i\}$ be an orthonormal frame field around the image of the initial curve $\gm(x)$.
We define a family of smooth functions $\{\Gm_i{}^k{}_j\}$ by
$\DD_{e_i}e_j=\Gm_i{}^k{}_je_k$.
It is modified Christoffel symbols.
When the conversion law of $\pd/\pd x^i$ and $e_i$
is given as $\pd/\pd x^i=h^j{}_ie_j$, we have
$\Dx p=p^i_xe_i+\Gm_i{}^k{}_j\gm^\ell_xh^i{}_\ell p^je_k$ for $p=p^ie_i$.
We define a vector valued bilinear form $\Gm$ by
$\Gm(e_i,e_j)=\Gm_i{}^k{}_je_k$.
We introduce differential operators
\begin{equation}
\begin{aligned}
D_xp&:=\Dx p-\Gm(\gm_x-\xi,p)
=p_x+\Gm(\xi,p),\\
D_tp&:=\Dt p-\Gm(\gm_t-\eta,p)
=p_t+\Gm(\eta,p),
\end{aligned}
\end{equation}
and use them instead of $\Dx$ and $\Dt$.
We use them only for proof of short time existence of solutions,
because the differential operators $D_x$ and $D_t$ depend on the choice of an orthonormal frame $\{e_i\}$.
We have
$D_xe_i=\DD_\xi e_i=\Gm(\xi,e_i)$
and
$D_te_i=\DD_\eta e_i=\Gm(\eta,e_i)$.
It holds that $D_x=\Dx$ if $\xi=\gm_x$,
and $D_t=\Dt$ if $\eta=\gm_t$.
Also, since
\begin{equation}
0=e_k(g(e_i,e_j))
=g(\DD_{e_k}e_i,e_j)+g(e_i,\DD_{e_k}e_j)
=\Gm_k{}^j{}_i+\Gm_k{}^i{}_j,
\end{equation}
$g(\Gm(v,p),q)$ is anti-symmetric with respect to $p$ and $q$.
Therefore,
\begin{equation}
\begin{aligned}
\pd_x(g(p,q))&=g(p_x,q)+g(p,q_x)
=g(p_x+\Gm(\xi,p),q)+g(p,q_x+\Gm(\xi,q))\\
&=g(D_xp,q)+g(p,D_xq).
\end{aligned}
\end{equation}
From this, the formula of integration by parts holds, i.e.,
\begin{equation}
\Ip{D_xp,q}+\Ip{p,D_xq}=0.
\end{equation}

We replace $\Dt$ and $\Dx$ in \eqref{eq:base-coupled-pre} by $D_t$ and $D_x$,
and call it the coupled system of equations.
\begin{equation}
\label{eq:base-coupled}
\begin{aligned}
&\left\{
\begin{aligned}
{\rm(O_\te)}\quad
&-D_x^2\te+\te^\perp
=D_x\Psi+\Phi,\\
{\rm(W_\xi)}\quad
&(D_t^2\xi-D_x^2\xi)^\perp=\te^\perp,
\quad\nr\xi^2=1,\\
{\rm(O_\eta)}\quad
&D_t\eta=D_x\te+\Psi+D_x\xi,\\
{\rm(O_\gm)}\quad
&\gm_t=\eta,
\end{aligned}
\right.\\
&\qquad\Psi:=R(\xi,D_x\xi)\xi-R(\xi,D_t\xi)\eta,
\quad\Phi:=(\nr{D_t\xi}^2-\nr{D_x\xi}^2)\xi-R(\xi,\eta)\eta,\\
&\qquad*^\perp:=*-g(*,\xi)\xi.
\end{aligned}
\end{equation}

We can restore the equation of motion \eqref{eq:base-single} from this system.
Namely, the equation of motion \eqref{eq:base-single} and the coupled system \eqref{eq:base-coupled} are equivalent.

\begin{proposition}
\label{proposition:restore-base-single}
Let $\{\gm,\eta,\xi,\te\}$ be a solution to \eqref{eq:base-coupled}.
Assume that functions $\gm$, $\eta$ and $\xi$ are of class $C^1$, and functions $\gm_{tx}=\gm_{xt}$, $\eta_{tx}=\eta_{xt}$, $\xi_{tt}$ and $\xi_{xx}$ are continuous.
If the initial data satisfies $\gm_x=\xi$ and $D_t\xi=D_t\gm_x$ at $t=0$,
then $\gm$ satisfies the equation of motion \eqref{eq:base-single} with a function $\mu$.
Moreover, $\te$, $\xi$ and $\eta$ coincide with those which are obtained by the above procedure starting from $\gm$ and $\mu$.
\end{proposition}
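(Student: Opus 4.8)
My plan is to reduce the proposition to the single assertion that $w:=\gm_x-\xi$ vanishes identically. Granting $w\equiv0$, the rest is immediate: $\xi=\gm_x$, so ${\rm(O_\gm)}$ gives $\eta=\gm_t$, the modified operators $D_x,D_t$ become $\Dx,\Dt$, and $\nr{\gm_x}^2=\nr\xi^2=1$ by ${\rm(W_\xi)}$; then I produce the multiplier $\mu$ explicitly and read \eqref{eq:base-single} off the equations.

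The key step, which I also expect to be the main obstacle, is showing $w\equiv0$. First I would record that the hypotheses make $w$ twice $t$-differentiable: $\gm_t=\eta$ gives $w_t=\gm_{xt}-\xi_t=\eta_x-\xi_t$, hence $w_{tt}=\eta_{tx}-\xi_{tt}$, continuous by hypothesis; moreover rewriting ${\rm(O_\eta)}$ as $\eta_t=D_x\te+\Psi+D_x\xi-\Gm(\eta,\eta)$ shows $\eta_t\in C^0$, so $\Dt w=w_t+\Gm(\eta,w)$ is again $C^1$ in $t$ and $\Dt^2w$ exists and is continuous. Next I would compute $\Dt^2w=\Dt^2\gm_x-\Dt^2\xi$, using $\Dt\gm_x=\Dx\gm_t$, the commutation rule $\Dt\Dx\gm_t=\Dx\Dt\gm_t-R(\gm_x,\gm_t)\gm_t$, the identity $\Dt\gm_t=D_t\eta=D_x\te+\Psi+D_x\xi$ from ${\rm(O_\eta)}$, the relation $\Dx(\cdot)=D_x(\cdot)+\Gm(w,\cdot)$, and $\Dt^2\xi=D_t^2\xi$, to arrive at
\[
\Dt^2w=\bigl(D_x^2\te+D_x\Psi\bigr)+D_x^2\xi-D_t^2\xi+\Gm\bigl(w,\,D_x\te+\Psi+D_x\xi\bigr)-R(\gm_x,\eta)\eta .
\]
Into this I would feed the remaining two equations: ${\rm(O_\te)}$ replaces $D_x^2\te+D_x\Psi$ by $\te^\perp-\Phi$; ${\rm(W_\xi)}$, together with $g(\xi,D_t^2\xi)=-\nr{D_t\xi}^2$ and $g(\xi,D_x^2\xi)=-\nr{D_x\xi}^2$, gives $\te^\perp+D_x^2\xi-D_t^2\xi=(\nr{D_t\xi}^2-\nr{D_x\xi}^2)\xi$; and the definition of $\Phi$ makes these cancel against the first two terms, leaving the homogeneous linear relation
\[
\Dt^2w=-R(w,\eta)\eta+\Gm\bigl(w,\,D_x\te+\Psi+D_x\xi\bigr).
\]
For each fixed $x$ this is a second-order linear ODE in $t$ along the curve $t\mapsto\gm(x,t)$, with continuous coefficients; its Cauchy data vanish, since $w|_{t=0}=0$ because $\gm_x=\xi$ there, and $w_t|_{t=0}=\eta_x|_{t=0}-\xi_t|_{t=0}=0$ because the assumption $D_t\xi=D_t\gm_x$ at $t=0$ reduces, after cancelling $\Gm(\eta,\xi)$, to $\xi_t=\eta_x$ at $t=0$. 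Uniqueness for linear ODEs then forces $w\equiv0$.

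With $\xi=\gm_x$, $\eta=\gm_t$ established, I would set $\al:=\te+\xi$ and define
\[
\mu:=\nr{\Dx\gm_x}^2-\nr{\Dt\gm_x}^2-g(\te,\gm_x)-1 .
\]
By ${\rm(W_\xi)}$ the $\perp$-parts of $\al$ and of $\Dt^2\xi-\Dx^2\xi$ agree, and the choice of $\mu$ matches their $\xi$-parts, so $\al=\Dt^2\xi-\Dx^2\xi-\mu\xi$; thus $\te=\al-\xi$, $\xi=\gm_x$ and $\eta=\gm_t$ are precisely the fields that the construction of Section \ref{section:derivation-equation} attaches to $\gm$ and this $\mu$. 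Finally ${\rm(O_\eta)}$ reads $\Dt\gm_t=\Dx\te+\Psi+\Dx\xi=\Dx\al+\Psi$, i.e.\ $\Dx(\Dt^2\gm_x-\Dx^2\gm_x-\mu\gm_x)=\Dt\gm_t-\Psi$, and rearranging gives \eqref{eq:base-single} (whose $\Psi=R(\gm_x,\Dx\gm_x)\gm_x-R(\gm_x,\Dt\gm_x)\gm_t$ is exactly the $\Psi$ of the coupled system).

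The hard part is the displayed collapse of $\Dt^2w$: it uses ${\rm(O_\te)}$, ${\rm(W_\xi)}$ and ${\rm(O_\eta)}$ all at once, and the computation has to be arranged so that only the derivatives controlled by the hypotheses (and by the equations themselves, which force a little extra regularity) are ever differentiated --- in particular $\Psi$ occurs only inside the combination $D_x^2\te+D_x\Psi$ that ${\rm(O_\te)}$ removes, and is never differentiated in $x$ by itself. Checking continuity of every coefficient of the limiting ODE, and the vanishing of $w_t$ at $t=0$, are the remaining points needing care; the rest is routine manipulation with the symmetries of $R$ and the anti-symmetry of $g(\Gm(\cdot,\cdot),\cdot)$.
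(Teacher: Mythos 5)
Your proposal is correct and follows essentially the same route as the paper: form the difference field ($w=\gm_x-\xi$, the paper's $\al=\xi-\gm_x$ up to sign), use $\rm(O_\gm)$, $\rm(O_\eta)$, $\rm(O_\te)$ and $\rm(W_\xi)$ to collapse its second time derivative to the homogeneous linear ODE $\Dt^2w=R(\eta,w)\eta+\Gm(w,D_t\eta)$, conclude $w\equiv0$ from the vanishing Cauchy data by ODE uniqueness, and then restore \eqref{eq:base-single} with the same explicit $\mu=\nr{\Dx\xi}^2-\nr{\Dt\xi}^2-g(\te,\xi)-1$. The only differences are cosmetic (sign convention and computing $\Dt^2\gm_x$ directly rather than via $\Dt(\Dt\xi-\Dx\eta)$).
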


\begin{proof}
We know $D_t=\Dt$ because $\gm_t=\eta$.
For $D_x$, similarly to $\Dx$,
we have $g(D_x\xi,\xi)=0$ and $g(D_x^2\xi,\xi)=-\nr{D_x\xi}^2$,
and
\begin{equation}
\label{eq:(Dt2xi-Dx2xi)perp}
(D_t^2\xi-D_x^2\xi)^\perp
=D_t^2\xi-D_x^2\xi+(\nr{D_t\xi}^2-\nr{D_x\xi}^2)\xi.
\end{equation}
We differentiate $\al:=\xi-\gm_x$ for time direction.
Note that $\Dx*=D_x\!*-\Gm(\al,*)$.
\begin{equation}
\begin{aligned}
&\Dt^2\al=\Dt(\Dt\xi-\Dx\gm_t)
=\Dt(\Dt\xi-\Dx\eta)
=\Dt^2\xi-\Dx\Dt\eta-R(\gm_t,\gm_x)\eta\\
&\qquad=D_t^2\xi-D_xD_t\eta+\Gm(\al,D_t\eta)-R(\gm_t,\gm_x)\eta.
\end{aligned}
\end{equation}
Here, we need $\gm_{tx}=\gm_{xt}$ and $\eta_{tx}=\eta_{xt}$ to show $\Dt\Dx\eta-\Dx\Dt\eta=R(\gm_t,\gm_x)\eta$.
In this equation,
\begin{equation}
\begin{aligned}
D_xD_t\eta&=D_x(D_x\te+\Psi+D_x\xi)
=-\Phi+\te^\perp+D_x^2\xi\\
&=-(\nr{D_t\xi}^2-\nr{D_x\xi}^2)\xi+R(\xi,\eta)\eta
+(D_t^2\xi-D_x^2\xi)^\perp+D_x^2\xi\\
&=-R(\eta,\xi)\eta+D_t^2\xi.
\end{aligned}
\end{equation}
Hence,
\begin{equation}
\Dt^2\al=R(\eta,\al)\eta+\Gm(\al,D_t\eta).
\end{equation}

Therefore, equality $\al=\Dt\al=0$ at $t=0$ implies $\xi=\gm_x$ for all $t$ by the uniqueness of solution to the ODE.
In particular, we have $D_x=\Dx$.
Next, letting $\mu=\nr{\Dx\xi}^2-\nr{\Dt\xi}^2-g(\te,\xi)-1$,
and using $\rm(W_\xi)$ and \eqref{eq:(Dt2xi-Dx2xi)perp}, we get
\begin{equation}
\Dt^2\xi-\Dx^2\xi-\mu\xi=\te^\perp+(g(\te,\xi)+1)\xi
=\te+\xi.
\end{equation}
Then, the original equation is restored as follows.
\begin{equation}
\begin{aligned}
\Dt\gm_t
&=\Dt\eta
=\Dx\te+\Psi+\Dx\xi
=\Dx(\Dt^2\xi-\Dx^2\xi-\mu\xi)+\Psi\\
&=\Dx(\Dt^2\gm_x-\Dx^2\gm_x-\mu\gm_x)+\Psi.
\end{aligned}
\end{equation}

Moreover, functions $\xi$, $\eta$ and $\te$ coincide with those which we defined by the previous procedure.
\end{proof}

\section
{Short time existence}
\label{section:short-time-exisistence}

To solve the coupled system \eqref{eq:base-coupled},
we need to specify clearly meaning of solving each equation.
Firstly, each equation other than $\rm(O_\gm)$ is an equation for a vector field along $\gm(x,t)$,
and each vector field is expressed as an ${\bf R}^n$-valued function via an orthonormal frame field $\{e_i\}$.
Any motion of $\gm$ is expressed by $\Gm$ and $R$, and, for example, continuousness of $\gm(x,t)$ is equivalent to continuousness of $\Gm(x,t)$ and $R(x,t)$.

We use usual notation $C^k$ only for functions $u(x,t)$.
When a function $u$ is of class $C^k$ only in $x$ (resp. $t$),
we denote it explicitly by $C^k(x)$ (resp. $C^k(t)$).
Also, we denote by $\Cls ij$ the class of functions $u(x,t)$ such that $\pd_x^k\pd_t^\ell u\in C^0(x,t)$ for $k\le i$ and $\ell\le j$.

We solve each equation in framework
$\gm\in\Cls01$,
$\xi\in C^1$,
$\eta\in\Cls01$,
$\te\in\Cls10$.
In this framework, we see $\Psi$, $\Phi\in C^0$.
Initial values are assumed $\gm(x,0)\in C^0(x)$, $\xi(x,0)\in C^1(x)$, $\xi_t(x,0)\in C^0(x)$ and $\eta(x,0)\in C^0(x)$.
In equations $\rm(O_\te)$, $\rm(W_\xi)$ and $\rm(O_\eta)$, the function $\gm$ is a known function and $\te$, $\xi$ and $\eta$ are vector fields along $\gm$.
The vector field $\xi$ is assumed to be a unit vector field.

We use following norms of functions.
\begin{equation}
\begin{aligned}
&m_0(u):=\sup_x\nr{u},
\quad m_1(u):=m_0(u)+m_0(u_x)+m_0(u_t),\\
&M_0(u,T):=\sup_{0\le t\le T}m_0(u)(t),\\
&M_{1,0}(u,T):=M_0(u,T)+M_0(u_x,T),
\quad M_{0,1}(u,T):=M_0(u,T)+M_0(u_t,T),\\
&M_1(u,T):=M_0(u,T)+M_0(u_x,T)+M_0(u_t,T).
\end{aligned}
\end{equation}

\subsection
{Estimation for $\rm(O_\eta)$ and $\rm(O_\gm)$}
\label{subsection:O-eta+O-gm}

We consider simple ordinary differential equations $\rm(O_\eta)$ and $\rm(O_\gm)$.
Propositions in this subsection are basic, but their proof give a plan of estimation of solutions to the partial differential equation $\rm(W_\xi)$.
So we include proofs of these propositions.

We give estimation of the solution $u$ to the ordinary differential equation
\begin{equation}
u'=F(t,u),\quad u(0)=a.
\end{equation}
The following lemma is well-known.

\begin{lemma}
\label{lemma:ODE-t-existence}
We consider the ordinary differential equation $u'(t)=F(t,u(t))$ with $u(0)=a$.
If the function $F(t,u)$ is continuous and satisfies $\nr{F(t,u)}\le K$ on the domain $\{(t,u)\mid0\le t\le T_0,\nr{u-a}\le r\}$,
then there exists a unique solution $u(t)\in C^1(t)$ on the time interval $[0,\min\{T_0,r/K\}]$, and satisfies
\begin{equation}
\nr{u-a}\le Kt,
\quad\nr{u'}\le K.
\end{equation}

Moreover, if the equation depends continuously on $x$, namely the equation has form
$u_t(x,t)=F(x,t,u(x,t))$ with $u(x,0)=a(x)$,
and function $F$ and initial value $a$ are continuous also with respect to $x$,
then the solution $u(x,t)$ belongs to $\Cls01$.
\end{lemma}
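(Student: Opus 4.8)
The plan is the classical Picard--Lindel\"of argument, with the dependence on the parameter $x$ handled by a Gronwall estimate; the very same scheme (integral equation, a priori $K$-bounds, contraction via a Lipschitz bound, continuous dependence via Gronwall) is the template that will be reused, in a far more delicate form, for the wave equation $\rm(W_\xi)$, which is why it is worth recording here.

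First I would pass to the integral form $u(t)=a+\int_0^t F(s,u(s))\,ds$ and set $T_1:=\min\{T_0,r/K\}$. On the complete metric space $X$ of continuous maps $u\colon[0,T_1]\to\{v:\nr{v-a}\le r\}$ with the sup metric, the operator $\Phi(u)(t):=a+\int_0^t F(s,u(s))\,ds$ is well defined, and the hypothesis $\nr F\le K$ gives $\nr{\Phi(u)(t)-a}\le Kt\le KT_1\le r$, so $\Phi$ maps $X$ into itself; the same estimate will supply the asserted bounds $\nr{u-a}\le Kt$ and $\nr{u'}\le K$ once a fixed point is produced. The functions $F$ that actually arise from the coupled system \eqref{eq:base-coupled} are built from the smooth tensors $\Gm$ and $R$ and depend smoothly---in fact polynomially---on the unknown, hence are Lipschitz in $u$ on the bounded domain at hand; with such a Lipschitz constant $L$ I would turn $\Phi$ into a contraction, either on a short enough subinterval followed by patching, or in one stroke by passing to the equivalent weighted metric $\sup_t e^{-2Lt}\nr{u(t)-w(t)}$, for which $\Phi$ becomes a strict contraction. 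The Banach fixed point theorem then yields a unique $u\in X$ with $\Phi(u)=u$. (If one insists on assuming only continuity of $F$, existence still follows from Peano's theorem by applying Arzel\`a--Ascoli to the Picard iterates, and uniqueness is precisely the place where the Lipschitz structure enters.) Since $s\mapsto F(s,u(s))$ is continuous, the right-hand side of $\Phi(u)=u$ is of class $C^1(t)$ with derivative $F(t,u(t))$; hence $u\in C^1(t)$ and $u'=F(t,u)$.

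For the parametrized statement I would run this fixed point for each $x$ to obtain $u(x,t)$ and then prove joint continuity. Fixing $x_0$, the relevant domain over a neighbourhood of $x_0$ is compact, so $F$ and $a$ are uniformly continuous there; subtracting the two integral equations for $u(x,\cdot)$ and $u(x_0,\cdot)$, splitting $F(x,s,u(x,s))-F(x_0,s,u(x_0,s))$ into a term controlled by the modulus of continuity of $F$ in $x$ and a term bounded by $L\nr{u(x,s)-u(x_0,s)}$, and applying Gronwall's inequality, I get $\sup_{0\le t\le T_1}\nr{u(x,t)-u(x_0,t)}\to0$ as $x\to x_0$. This uniform-in-$t$ convergence, together with the continuity in $t$ of each $u(x_0,\cdot)$, forces joint continuity of $u$, so $u\in C^0(x,t)$; then $u_t(x,t)=F(x,t,u(x,t))$ is a composition of continuous maps, hence also lies in $C^0(x,t)$, i.e.\ $u\in\Cls01$.

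The only genuinely substantive point is the continuous-dependence estimate in the last paragraph, and even that is a routine Gronwall argument, so I do not anticipate a real obstacle; everything else is bookkeeping around the $K$-bound and the Lipschitz bound. Its value is mainly that it pins down the pattern of reasoning that the much harder existence proof for $\rm(W_\xi)$ in \S\ref{subsection:W-xi} will imitate.
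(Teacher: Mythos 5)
The paper offers no proof of this lemma at all---it is simply declared well-known---so there is nothing to compare your argument against; what you have written is the standard Picard--Lindel\"of/Gronwall proof and it is essentially sound. Two points are worth flagging. First, as stated the lemma assumes only that $F$ is continuous and bounded, and under that hypothesis uniqueness is false in general (the classical example $u'=\nr{u}^{1/2}$, $u(0)=0$); you are right that the uniqueness claim is really borrowing the Lipschitz structure of the particular $F$'s that occur in the coupled system, which are polynomials in $u$ with continuous bounded coefficients, so your reading is the correct one for how the lemma is actually used. Second, your parenthetical shortcut for the pure-continuity case is not quite right: applying Arzel\`a--Ascoli to the Picard iterates yields a uniformly convergent subsequence $u_{n_k}$, but its limit need not be a fixed point of $\Phi$, since $\Phi(u_{n_k})=u_{n_k+1}$ lies along a different subsequence; the standard repair is to use Euler polygons or Schauder's fixed point theorem instead. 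Everything else---the invariance of the ball $\nr{u-a}\le r$ from $\nr F\le K$, the bounds $\nr{u-a}\le Kt$ and $\nr{u'}\le K$, the $C^1(t)$ regularity read off from the integral equation, and the joint continuity in $(x,t)$ via uniform continuity of $F$ and $a$ on compacta plus Gronwall---is correct, and is indeed the template reused in Lemmas \ref{lemma:ODE-t-difference} and \ref{lemma:ODE-t-contraction} and in subsection \ref{subsection:W-xi}.
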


We study influence of the function $F(t,u)$ to solutions.

\begin{lemma}
\label{lemma:ODE-t-difference}
Suppose that $F_1(t,u)$ and $F_2(t,u)$ are continuous functions,
and each ODE $u_i'=F_i(t,u_i)$ with $u_i(0)=a$ has a $C^1(t)$-solution $u_i$ on the time interval $[0,T]$,
and $\nr{u_i-a}\le r$ holds on the time interval $[0,T]$.
Let $S=\{(t,u)\mid0\le t\le T,\nr{u-a}\le r\}$,
$\dl F=F_2-F_1$ and $\dl u=u_2-u_1$.

If $\nr{(F_1)_u}\le K$ and $\nr{\dl F}\le\ep$ on $S$, then it holds that
\begin{equation}
\nr{\dl u}\le\ep K^{-1}(\exp(Kt)-1),\quad
\nr{\dl u'}\le\ep\exp(Kt).
\end{equation}
\end{lemma}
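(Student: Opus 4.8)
The plan is to estimate $\dl u = u_2 - u_1$ directly via a Gronwall-type argument applied to the integral form of the two ODEs. First I would write both solutions in integral form,
\begin{equation*}
u_i(t) = a + \int_0^t F_i(s,u_i(s))\,ds,
\end{equation*}
which is legitimate since each $u_i \in C^1(t)$ and each stays in $S$ on $[0,T]$ (so the integrands make sense). Subtracting gives
\begin{equation*}
\dl u(t) = \int_0^t \bigl(F_2(s,u_2(s)) - F_1(s,u_1(s))\bigr)\,ds,
\end{equation*}
and the key step is to split the integrand as
\begin{equation*}
F_2(s,u_2) - F_1(s,u_1) = \bigl(F_2(s,u_2) - F_1(s,u_2)\bigr) + \bigl(F_1(s,u_2) - F_1(s,u_1)\bigr) = \dl F(s,u_2) + \bigl(F_1(s,u_2) - F_1(s,u_1)\bigr).
\end{equation*}

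Next I would bound the two pieces: the first by $\nr{\dl F} \le \ep$ on $S$, and the second by the mean value theorem along the segment from $u_1(s)$ to $u_2(s)$ (which lies in $S$ because $S$ is convex in $u$), using $\nr{(F_1)_u} \le K$ to get $\nr{F_1(s,u_2) - F_1(s,u_1)} \le K\nr{\dl u(s)}$. This yields the scalar inequality
\begin{equation*}
\nr{\dl u(t)} \le \int_0^t \ep + K\nr{\dl u(s)}\,ds.
\end{equation*}
Setting $\phi(t) = \int_0^t \nr{\dl u(s)}\,ds$, this reads $\phi'(t) \le \ep + K\phi(t)$ with $\phi(0)=0$; multiplying by the integrating factor $\exp(-Kt)$ and integrating gives $\phi(t) \le \ep K^{-1}(\exp(Kt)-1)\cdot K^{-1}$... more precisely one obtains $\nr{\dl u(t)} \le \ep K^{-1}(\exp(Kt)-1)$ after feeding the bound on $\phi$ back into $\nr{\dl u(t)} \le \ep + K\phi(t)$. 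Finally, $\nr{\dl u'(t)} = \nr{F_2(t,u_2) - F_1(t,u_1)} \le \ep + K\nr{\dl u(t)} \le \ep + K\cdot\ep K^{-1}(\exp(Kt)-1) = \ep\exp(Kt)$, which is the second claimed estimate.

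There is no serious obstacle here; the statement is a textbook Gronwall estimate. The only point requiring a little care is the convexity of $S$ in the $u$-variable so that the mean value estimate $\nr{F_1(s,u_2)-F_1(s,u_1)} \le K\nr{u_2-u_1}$ is valid with the constant $K$ from the hypothesis $\nr{(F_1)_u}\le K$ on $S$ — this is immediate since $S$ is a product of an interval in $t$ with a ball in $u$. One should also note that the case $K=0$ is handled by the obvious limit (or directly, $\nr{\dl u}\le\ep t$ and $\nr{\dl u'}\le\ep$), so the formulas should be read in that limiting sense if one wants to allow $K=0$.
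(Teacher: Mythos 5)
Your proposal is correct and follows essentially the same route as the paper: the same decomposition $F_2(t,u_2)-F_1(t,u_1)=\dl F(t,u_2)+\{F_1(t,u_2)-F_1(t,u_1)\}$, the same integral inequality $\nr{\dl u(t)}\le\int_0^t\ep+K\nr{\dl u(\tau)}\,d\tau$ solved by Gronwall, and the same back-substitution to get the bound on $\nr{\dl u'}$. The remarks about convexity of $S$ in $u$ and the $K=0$ limiting case are sensible but not points the paper dwells on.
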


\begin{proof}
Since $\dl u'=F_2(t,u_2)-F_1(t,u_1)=\dl F(t,u_2)+\{F_1(t,u_2)-F_1(t,u_1)\}$,
\begin{equation}
\begin{aligned}
&\nr{\dl u'}\le\ep+\max\{\nr{(F_1)_u}\}\nr{\dl u}
\le\ep+K\nr{\dl u},\\
&\nr{\dl u}\le\int_0^t\nr{\dl u'(\tau)}\,d\tau
\le\int_0^t\ep+K\nr{\dl u(\tau)}\,d\tau.
\end{aligned}
\end{equation}
By solving this integral inequality, we get
\begin{equation}
\begin{aligned}
\nr{\dl u}&\le\ep K^{-1}(\exp(Kt)-1),\\
\nr{\dl u'}&\le\ep+K\nr{\dl u}\le\ep\exp(Kt).
\end{aligned}
\end{equation}
\end{proof}

To use the contraction mapping theorem, we prepare the following lemma.

\begin{lemma}
\label{lemma:ODE-t-contraction}
Suppose that functions $F_1(t,u)$ and $F_2(t,u)$ are continuous,
and each ODE $u_i'=F_i(t,u_i)$ with $u_i(0)=a$ has a $C^1(t)$-solution $u_i$ on the time interval $[0,T]$,
and $\nr{u_i-a}\le r$ on $[0,T]$.
Let $S=\{(t,u)\mid0\le t\le T,\nr{u-a}\le r\}$,
$\dl F:=F_2-F_1$ and $\dl u:=u_2-u_1$.

If $\nr{(F_1)_u}\le K$ and $\nr{\dl F_t}\le\ep$ on $S$,
and if $\dl F(0,u)=0$, then it holds that
\begin{equation}
\begin{aligned}
\nr{\dl u}&\le\ep K^{-2}(\exp(Kt)-Kt-1),\\
\nr{\dl u'}&\le\ep K^{-1}(\exp(Kt)-1).
\end{aligned}
\end{equation}
\end{lemma}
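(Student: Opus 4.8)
The plan is to run the argument of Lemma~\ref{lemma:ODE-t-difference} with essentially no change, the one new point being that the smallness hypothesis is now imposed on $\dl F_t$ rather than on $\dl F$ itself. First I would convert the former into the latter at the cost of a factor $t$: since $\dl F(0,u)=0$ and $\nr{\dl F_t}\le\ep$ throughout $S$, integrating in the time variable along $\tau\in[0,t]$ gives $\nr{\dl F(t,u)}\le\ep t$ for every $(t,u)\in S$.

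Next, exactly as before, write $\dl u'=F_2(t,u_2)-F_1(t,u_1)=\dl F(t,u_2)+\{F_1(t,u_2)-F_1(t,u_1)\}$. The segment joining $u_1(t)$ to $u_2(t)$ lies in $S$ because the ball $\{\nr{u-a}\le r\}$ is convex and both endpoints lie in it, so the mean value theorem together with $\nr{(F_1)_u}\le K$ yields the pointwise bound $\nr{F_1(t,u_2)-F_1(t,u_1)}\le K\nr{\dl u}$. Combining with the previous step, $\nr{\dl u'}\le\ep t+K\nr{\dl u}$, and since $\dl u(0)=0$ this integrates to $\nr{\dl u(t)}\le\int_0^t\bigl(\ep\tau+K\nr{\dl u(\tau)}\bigr)\,d\tau$.

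Finally I would solve this linear integral inequality. Putting $\psi(t):=\int_0^t(\ep\tau+K\nr{\dl u(\tau)})\,d\tau$, one has $\psi(0)=0$ and $\psi'\le\ep t+K\psi$, hence $(e^{-Kt}\psi)'\le\ep t\,e^{-Kt}$; integrating from $0$ to $t$ and using $\int_0^t\tau e^{-K\tau}\,d\tau=K^{-2}(1-e^{-Kt})-K^{-1}t e^{-Kt}$ gives $\psi(t)\le\ep K^{-2}(\exp(Kt)-Kt-1)$, which is the claimed bound for $\nr{\dl u}$. Substituting this back into $\nr{\dl u'}\le\ep t+K\nr{\dl u}$ and simplifying (the $\ep t$ cancels the $-Kt$ term after multiplying by $K^{-1}$) produces $\nr{\dl u'}\le\ep K^{-1}(\exp(Kt)-1)$.

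There is no real obstacle here; the only thing requiring a little care is that, unlike in Lemma~\ref{lemma:ODE-t-difference}, the forcing term $\ep t$ in the integral inequality is $t$-dependent, so one cannot merely quote that lemma. Carrying the same Gr\"onwall-type computation through with the extra integral $\int_0^t\tau e^{-K\tau}\,d\tau$ is what produces the $-Kt$ correction inside the exponential, and one should double-check that this correction is exactly what makes the two stated estimates consistent with $\nr{\dl u'}\le\ep t+K\nr{\dl u}$.
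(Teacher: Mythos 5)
Your proposal is correct and follows essentially the same route as the paper: the same decomposition $\dl u'=\dl F(t,u_2)+\{F_1(t,u_2)-F_1(t,u_1)\}$, the same conversion of the bound on $\dl F_t$ into $\nr{\dl F(t,u)}\le\ep t$ via $\dl F(0,u)=0$, and the same integral inequality $\nr{\dl u(t)}\le\frac12\ep t^2+K\int_0^t\nr{\dl u(\tau)}\,d\tau$, which the paper solves without showing the Gr\"onwall computation you carry out explicitly (and your computation checks out, including the consistency of the two stated bounds).
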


\begin{proof}
For $\dl u'=F_2(t,u_2)-F_1(t,u_1)=\dl F(t,u_2)+\{F_1(t,u_2)-F_1(t,u_1)\}$,
we have
\begin{equation}
\begin{aligned}
&\nr{\dl F(t,u_2)}
\le\int_0^t\nr{\dl F_t(\tau,u_2)}\,d\tau
\le\ep t,\\
&\nr{F_1(t,u_2)-F_1(t,u_1)}
\le K\nr{\dl u}.
\end{aligned}
\end{equation}
Hence,
\begin{equation}
\nr{\dl u(t)}\le\int_0^t\nr{\dl u'(\tau)}\,d\tau
\le\int_0^t\ep t+K\nr{\dl u(\tau)}\,d\tau
=\frac12\ep t^2+K\int_0^t\nr{\dl u(\tau)}\,d\tau.
\end{equation}
By solving this integral inequality, we get
\begin{equation}
\begin{aligned}
\nr{\dl u}&\le\ep K^{-2}(\exp(Kt)-Kt-1),\\
\nr{\dl u'}&\le\ep t+K\nr{\dl u(t)}
\le\ep K^{-1}(\exp(Kt)-1).
\end{aligned}
\end{equation}
\end{proof}

We treat equation $\rm(O_\eta)$
$D_t\eta=D_x\te+\Psi+D_x\xi$,
$\Psi=R(\xi,D_x\xi)\xi-R(\xi,D_t\xi)\eta$,
$\eta(x,0)=a(x)$
in the following form.
\begin{equation}
\label{eq:O-eta-local-expression}
\eta_t+\Gm(\eta,\eta)+R(\xi,\xi_t+\Gm(\eta,\xi))\eta=D_x\te+D_x\xi.
\end{equation}
I.e., we define the function $F(x,t,u)$ in Lemma \ref{lemma:ODE-t-existence} by
\begin{equation}
F(x,t,u)=-\Gm(u,u)-R(\xi,\xi_t+\Gm(u,\xi))u+D_x\te+D_x\xi.
\end{equation}

By Lemma \ref{lemma:ODE-t-existence}, this ODE has a solution $\eta=u\in\Cls01$ for known functions $\gm\in C^0$, $\xi\in C^1$ and $\te\in\Cls10$, and a initial value $u(x,0)=a(x)\in C^0(x)$, as follows.

\begin{proposition}
\label{proposition:O-eta-existence}
We consider the ODE $\rm(O_\eta)$ on the time interval $[0,T_0]$.
We assume that known functions
$\gm\in C^0$, $\xi\in C^1$ and $\te\in\Cls10$ satisfies
$M_0(\Gm,T_0)$, $M_0(R,T_0)$, $M_1(\xi,T_0)$, $M_{1,0}(\te,T_0)\le K_0$,
and that the initial value satisfies $m_0(a)\le r_0$.
Then, for any $r>r_0$, there exists a positive constant $K_1$ depending only on $K_0$ and $r$,
such that a unique solution $\eta\in\Cls01$ exists on the time interval $[0,T=\min\{T_0,(r-r_0)/K_1\}]$,
and the solution satisfies $M_0(\eta,T)\le r$ and $M_0(\eta_t,T)\le K_1$.
\end{proposition}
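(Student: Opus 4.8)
The plan is to read $\rm(O_\eta)$ in the local form \eqref{eq:O-eta-local-expression} as a first–order ODE $u_t=F(x,t,u)$ with
\[
F(x,t,u)=-\Gm(u,u)-R(\xi,\xi_t+\Gm(\eta,\xi))u\big|_{\eta=u}+D_x\te+D_x\xi ,
\]
i.e.\ $F(x,t,u)=-\Gm(u,u)-R(\xi,\xi_t+\Gm(u,\xi))u+D_x\te+D_x\xi$, and then apply Lemma \ref{lemma:ODE-t-existence} on the tube $S=\{(x,t,u)\mid 0\le t\le T_0,\ \nr{u-a(x)}\le r-r_0\}$. First I would verify that $F$ is continuous in $(x,t,u)$, and continuous in $x$ in the refined sense needed: the coefficient fields $\Gm$ and $R$ are continuous because $\gm\in C^0$; the terms $\xi_t$, $D_x\xi=\xi_x+\Gm(\xi,\xi)$ and $D_x\te=\te_x+\Gm(\xi,\te)$ are continuous because $\xi\in C^1$ and $\te\in\Cls10$ (so in particular $\te_x\in C^0$); and the remaining dependence on $u$ is polynomial. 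Since $F$ and $a$ are also continuous in $x$, the last assertion of Lemma \ref{lemma:ODE-t-existence} will deliver $\eta\in\Cls01$.

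Next I would produce a bound $\nr{F}\le K_1$ on $S$ with $K_1$ depending only on $K_0$ and $r$. On $S$ one has $\nr{u}\le m_0(a)+(r-r_0)\le r$, and the hypotheses give $\nr\Gm,\nr R\le K_0$, $\nr\xi\le K_0$ (and indeed $\nr\xi=1$), $\nr{\xi_t},\nr{\xi_x}\le K_0$, $m_0(\te),m_0(\te_x)\le K_0$ on $[0,T_0]$. Using $\nr{\Gm(p,q)}\le c\nr\Gm\nr p\nr q$ and $\nr{R(p,q)r}\le c\nr R\nr p\nr q\nr r$ with an absolute constant $c$, one gets
\[
\nr{\Gm(u,u)}\le cK_0r^2,\qquad
\nr{R(\xi,\xi_t+\Gm(u,\xi))u}\le cK_0(K_0+cK_0^2r)r,\qquad
\nr{D_x\te}+\nr{D_x\xi}\le c(K_0+K_0^3),
\]
so $K_1:=$ the sum of these bounds is a fixed polynomial in $K_0$ and $r$; note it does not involve $T_0$.

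With continuity and the bound $\nr F\le K_1$ on $S$ in hand, Lemma \ref{lemma:ODE-t-existence}, applied with its parameter $r$ taken to be $r-r_0$ and its $K$ taken to be $K_1$, yields a unique solution $\eta=u\in\Cls01$ on $[0,T]$, $T=\min\{T_0,(r-r_0)/K_1\}$, with $\nr{u-a}\le K_1t\le r-r_0$ and $\nr{u_t}\le K_1$. The first estimate gives $M_0(\eta,T)\le m_0(a)+(r-r_0)\le r$ and the second gives $M_0(\eta_t,T)\le K_1$, which is the assertion.

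There is no deep obstacle here; the points that need care are (i) checking that $D_x\xi$ and $D_x\te$ really lie in $C^0$ under the stated regularity — this is exactly where $\te\in\Cls10$ (not merely $\te\in C^0$) is used — and (ii) bookkeeping so that the constant $K_1$ is a function of $K_0$ and $r$ alone, with the tube $S$ chosen (radius $r-r_0$ about $a$) so that $\nr u\le r$ throughout it; this is what forces the final time $T=\min\{T_0,(r-r_0)/K_1\}$. As the text remarks, this argument is the template for the genuinely harder estimates for $\rm(W_\xi)$.
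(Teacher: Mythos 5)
Your proposal is correct and follows essentially the same route as the paper: read $\rm(O_\eta)$ in the local form \eqref{eq:O-eta-local-expression} as $u_t=F(x,t,u)$ with $F$ a second order polynomial in $u$ whose coefficients are continuous and bounded in terms of $K_0$, bound $\nr F\le K_1(K_0,r)$ on the tube of radius $r-r_0$ about $a$, and invoke Lemma \ref{lemma:ODE-t-existence}. Your version merely spells out the continuity check for $D_x\xi$, $D_x\te$ and the bookkeeping for $K_1$ that the paper leaves implicit.
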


\begin{proof}
The function $F$ is a second order polynomial in $u=\eta$,
and its coefficients are all continuous and bounded from above using $K_0$.
Therefore, there exists a positive constant $K_1$ depending only on $r$ and $K_0$
such that $\max_{S(T_0,r)}\{\nr F\}\le K_1$.
Hence, by Lemma \ref{lemma:ODE-t-existence},
there exists a solution $\eta\in\Cls01$ on the time interval $[0,T]$,
and it satisfies
$m_0(\eta)\le r$ and $m_0(\eta_t)\le K_1$.
\end{proof}

Moreover, the difference of solutions corresponding to 2 sets of known functions is estimated as follows.

\begin{proposition}
\label{proposition:O-eta-difference}
We consider on the time interval $[0,T]$.
Let $i$ be an index $1$ or $2$.
We assume that, for each $i$,
functions $\gm_i\in C^0$, $\xi_i\in C^1$ and $\te_i\in\Cls10$ satisfy
$M_0(\Gm_i,T)$, $M_0(R_i,T)$, $M_1(\xi_i,T)$, $M_{1,0}(\te_i,T)\le K_0$,
and that there exists a solution $\eta_i$ to the ODE $\rm(O_\eta)$ with initial value $a$,
and it satisfies $M_0(\eta_i)\le r$.
We denote by $\dl p$ the difference $p_2-p_1$ of functions,
and assume that
$M_0(\dl\Gm,T)$, $M_0(\dl R,T)$, $M_1(\dl\xi,T)$, $M_{1,0}(\dl\te,T)\le\ep$.
Then, there exists a positive constant $K$ depending only on $K_0$ and $r$ such that $\dl\eta$ satisfies
$M_0(\dl\eta,T)\le KT\ep$ and $M_0(\dl\eta_t,T)\le K\ep$.
\end{proposition}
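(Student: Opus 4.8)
\noindent\textit{Proof plan.}
The plan is to recast both instances of $\rm(O_\eta)$ in the normal form of Lemma~\ref{lemma:ODE-t-difference} and read off the conclusion. For $i=1,2$, write \eqref{eq:O-eta-local-expression} for the data set $\{\gm_i,\xi_i,\te_i\}$ as $(\eta_i)_t=F_i(x,t,\eta_i)$ with $\eta_i(x,0)=a(x)$, where
\begin{equation*}
F_i(x,t,u):=-\Gm_i(u,u)-R_i\bigl(\xi_i,(\xi_i)_t+\Gm_i(u,\xi_i)\bigr)u+D_x^{(i)}\te_i+D_x^{(i)}\xi_i,\qquad
D_x^{(i)}p:=p_x+\Gm_i(\xi_i,p).
\end{equation*}
Continuity of the data makes each $F_i$ continuous, and by hypothesis both solutions stay in $S:=\{(x,t,u)\mid0\le t\le T,\ \nr u\le r\}$; since $\eta_1,\eta_2$ share the initial value $a$, we have $\dl\eta(x,0)=0$. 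Fixing $x$, I would apply Lemma~\ref{lemma:ODE-t-difference} on $S$ (its argument is insensitive to whether the ball is centred at $a$ or at the origin), and, since the constants below are uniform in $x$, take the supremum over $x$ at the end to pass from pointwise bounds to bounds on $M_0$.

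Two estimates on $S$ are needed, both with a constant $K$ depending only on $K_0$ and $r$. First, $\nr{(F_1)_u}\le K$: this is immediate, since $F_1$ is a polynomial of degree $2$ in $u$ whose coefficients are built from $\Gm_1$, $R_1$, $\xi_1$, $(\xi_1)_t$, all bounded by $K_0$, so $(F_1)_u$ is affine in $u$ and bounded for $\nr u\le r$. Second, $\nr{\dl F}\le K\ep$ where $\dl F:=F_2-F_1$: here I would expand $\dl F$ term by term as a telescoping sum in which exactly one factor carries a difference $\dl\Gm$, $\dl R$, $\dl\xi$, $\dl\xi_t$, $\dl\te_x$ or $\dl\xi_x$ — each $\le\ep$ by assumption — and all remaining factors are bounded by $K_0$ or by $\nr u\le r$. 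For instance $D_x^{(2)}\te_2-D_x^{(1)}\te_1=\dl\te_x+\dl\Gm(\xi_2,\te_2)+\Gm_1(\dl\xi,\te_2)+\Gm_1(\xi_1,\dl\te)$, and similarly for the $D_x^{(i)}\xi_i$ term; summing over all terms gives $\nr{\dl F}\le K\ep$.

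Feeding $K$ and $K\ep$ into Lemma~\ref{lemma:ODE-t-difference} then gives, for every $x$,
\begin{equation*}
\nr{\dl\eta}\le K\ep\cdot K^{-1}\bigl(\exp(Kt)-1\bigr)=\ep\int_0^t\exp(K\tau)\,d\tau\le\ep\,t\exp(KT),\qquad
\nr{(\dl\eta)_t}\le K\ep\exp(Kt)\le K\ep\exp(KT).
\end{equation*}
On the bounded time interval the factor $\exp(KT)$ is a constant, so absorbing it and renaming yields $M_0(\dl\eta,T)\le KT\ep$ and $M_0(\dl\eta_t,T)\le K\ep$, as claimed. I expect the only real work to be the bookkeeping in the telescoping bound for $\nr{\dl F}$ — in particular keeping track of the fact that $D_x^{(i)}$ itself carries $\Gm_i$ and $\xi_i$, so the $D_x^{(i)}\te_i$ and $D_x^{(i)}\xi_i$ terms contribute both the derivative differences $\dl\te_x,\dl\xi_x$ and additional zeroth-order differences $\dl\Gm,\dl\xi$; everything else is the Gronwall argument already carried out in Lemma~\ref{lemma:ODE-t-difference}.
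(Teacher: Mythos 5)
Your proposal is correct and follows essentially the same route as the paper: both bound $\nr{(F_1)_u}$ using the polynomial structure of $F$ in $u$, bound $\nr{\dl F}$ by the telescoping expansion in which each term carries exactly one difference factor, and then invoke Lemma~\ref{lemma:ODE-t-difference}. Your explicit extraction of the factor $t$ from $K^{-1}(\exp(Kt)-1)$ to get the $KT\ep$ bound on $M_0(\dl\eta,T)$ is a small point the paper leaves implicit, but the argument is the same.
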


\begin{proof}
Since $F$ is a second order polynomial in $u=\eta$, $\nr{(F_1)_u}$ is bounded from above by a positive constant $K_1$ depending only on $K_0$ and $r$.
The difference $\dl F$ is a second order polynomial in $\dl u$.
When we replace all $p_2$ in its coefficients by $p_1+\dl p$ and expand them, each term contains at least one $\dl p$.
Therefore, there exists a positive constant $K_2$ depending only on $K_0$ and $r$
such that $\dl F$ satisfies $M_0(\dl F,T)\le K_2\ep$.
Applying Lemma \ref{lemma:ODE-t-difference} with $K:=K_1$ and $\ep:=K_2\ep$,
we have $M_{0,1}(\dl\eta,T)\le K_3\ep$ using $K_3=K_2\max\{K_0^{-1}(\exp K_0-1),\exp K_0\}$.
\end{proof}

We express the ODE $\rm(O_\gm)$
$\gm_t=\eta$ by a local coordinate system.
At $\gm(x,t)\in M$, we express the conversion matrix of $e_i$ and $\pd/\pd x^i$ by $(h^i{}_j(\gm(x,t)))$.
I.e., $e_j=h^i{}_j\cdot\pd/\pd x^i$.
Then, $\gm^i_t\cdot\pd/\pd x^i=\eta^je_j=\eta^jh^i{}_j\cdot\pd/\pd x^i$,
and the ODE becomes $\gm^i_t(x,t)=h^i{}_j(\gm(x,t))\eta^j(x,t)$.
Since $h^i{}_j$ is smooth, when we express $\gm_t(x,t)=F(x,t,\gm)$,
$F_t$ is continuous if $\eta\in\Cls01$.
From this we have the following propositions about the existence of solutions and the continuousness of solutions with respect to coefficients.

\begin{proposition}
\label{proposition:O-gm-sol}
We consider the ODE $\rm(O_\gm)$: $\gm^i_t=h^i{}_j(\gm)\eta^j$ with $\gm(x,0)=a(x)$ on the time interval $[0,T_0]$.
We assume that the function $h^i{}_j(u)$ on the manifold satisfies
$\nr{h^i{}_j(u)}$, $\nr{(h^i{}_j)_u(u)}\le K_0$
on $\cup_x\{u\mid\nr{u-a(x)}\le r\}$,
and the known functions satisfy
$\eta\in\Cls01$ and $M_{0,1}(\eta,T_0)\le K_0$.
Then, there exists a positive constant $K_1$ depending only on $K_0$
such that a solution $\gm\in\Cls01$ uniquely exists on the time interval $[0,T]$
and satisfies $M_0(\gm-a,T)\le r$ and $M_0(\gm_t)\le K_1$, where $T:=\min\{T_0,r/K_1\}$.
\end{proposition}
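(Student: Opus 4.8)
The plan is to recognize equation $\rm(O_\gm)$, in the coordinate form $\gm^i_t=h^i{}_j(\gm)\eta^j$ with $\gm(x,0)=a(x)$, as an instance of the parametrized ODE handled by Lemma \ref{lemma:ODE-t-existence}, with right-hand side $F(x,t,u):=h^i{}_j(u)\,\eta^j(x,t)$. First I would verify that $F$ is continuous on the domain $\{(x,t,u)\mid 0\le t\le T_0,\ \nr{u-a(x)}\le r\}$: the functions $h^i{}_j$ are smooth, hence continuous in $u$, and $\eta\in\Cls01$ is continuous in $(x,t)$, so the product is continuous; moreover $F$ and the initial value $a$ depend continuously on $x$, which is exactly what the second part of Lemma \ref{lemma:ODE-t-existence} needs in order to promote the solution to class $\Cls01$.

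Next I would bound $F$ on that domain. Since $\nr{h^i{}_j(u)}\le K_0$ whenever $\nr{u-a(x)}\le r$, and $\nr{\eta(x,t)}\le M_0(\eta,T_0)\le M_{0,1}(\eta,T_0)\le K_0$, the quantity $\nr{F(x,t,u)}$ is bounded above by a positive constant $K_1$ depending only on $K_0$ (and the ambient dimension, which is fixed throughout). Applying Lemma \ref{lemma:ODE-t-existence} with this $K:=K_1$ and the given $r$, $T_0$, we obtain a unique solution $\gm\in\Cls01$ on the interval $[0,T]$, $T:=\min\{T_0,r/K_1\}$, satisfying $\nr{\gm-a}\le K_1 t\le r$ and $\nr{\gm_t}\le K_1$; that is, $M_0(\gm-a,T)\le r$ and $M_0(\gm_t)\le K_1$, which is the assertion.

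There is no real obstacle here; the whole content is the verification that the hypotheses of Lemma \ref{lemma:ODE-t-existence} are met. The only point demanding a little care is the bookkeeping of constants: $K_1$ must depend on $K_0$ alone, not on $r$ or $T_0$, and this is automatic because the sup-bound on $F$ uses only the sup-bounds on $h^i{}_j$ and on $\eta$, both of which are controlled by $K_0$. One may also note, as in the discussion preceding the proposition, that $\gm_t=h^i{}_j(\gm)\eta^j$ is continuous directly ($h^i{}_j$ smooth, $\gm$ and $\eta$ continuous), which is consistent with the claimed $\Cls01$ regularity.
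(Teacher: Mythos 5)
Your proposal is correct and follows essentially the same route as the paper: both identify $F(x,t,u)=h^i{}_j(u)\eta^j(x,t)$, bound it by a constant $K_1$ depending only on $K_0$ via the hypotheses on $h^i{}_j$ and $\eta$, and invoke Lemma \ref{lemma:ODE-t-existence} to obtain the unique $\Cls01$ solution with the stated estimates. Your version merely spells out the continuity check and the constant bookkeeping in slightly more detail than the paper does.
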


\begin{proof}
It is similar to Proof of Proposition \ref{proposition:O-eta-existence}.
The function $F(x,t,u)=h^i{}_j(u)\eta^j(x,t)$ is continuous, and there is a positive constant $K_1$ depending only on $K_0$ such that $\nr{F}\le K_1$ on the time interval $[0,1]$.
Therefore, by Lemma \ref{lemma:ODE-t-existence}, there exists a solution $\gm\in\Cls01$ on the time interval $[0,T]$ such that $m_0(\gm-a)\le r$ and $m_0(\gm_t)\le K_1$.
\end{proof}

\begin{proposition}
\label{proposition:O-gm-difference}
We consider the ODE $\rm(O_\gm)$ in Proposition \ref{proposition:O-gm-sol}
for $2$ known functions $\eta=\eta_k$ $(k=1,2)$,
and assume that each has a solution $\gm_k\in\Cls01$ on the time interval $[0,T]$.
We also assume that $h^i{}_j(u)$ satisfies $\nr{h^i{}_j(u)}$, $\nr{(h^i{}_j)_u(u)}\le K_0$ on $\cup_x\{u\mid\nr{u-a(x)}\le r\}$,
and that $\eta_1(x,0)=\eta_2(x,0)$, $\eta_k\in\Cls01$ and $M_{0,1}(\eta_k,T)\le K_0$.
And, we assume that the solution $\eta_k$ satisfies $M_0(\gm_k-a,T)\le r$ and $M_0(\gm_t)\le K_1$.
We denote by $\dl*=*_2-*_1$ the difference of functions,
and assume that $M_{0,1}(\dl\eta,T)\le\ep$.
Then, there exists a positive constant $K_1$ depending only on $K_0$
such that $M_{0,1}(\dl\gm,T)\le K_1T\ep$.
\end{proposition}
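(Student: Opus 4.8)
\medskip

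The plan is to treat $\rm(O_\gm)$ as an instance of the Gronwall-type estimates of this subsection, with the coefficient function in the role of $F$. For $k=1,2$ put $F_k(x,t,u):=h^i{}_j(u)\,\eta_k^j(x,t)$, so that $\gm_k$ solves $(\gm_k)^i_t=F_k(x,t,\gm_k)$ with the common initial value $a$, and work on the set $S:=\bigcup_x\{(t,u)\mid 0\le t\le T,\ \nr{u-a(x)}\le r\}$, on which the hypotheses on $h^i{}_j$ hold and, since $M_0(\gm_k-a,T)\le r$, the equations are valid. First I would record the background bounds on $S$: since $\nr{(h^i{}_j)_u}\le K_0$ there and $M_0(\eta_1,T)\le M_{0,1}(\eta_1,T)\le K_0$, we get $\nr{(F_1)_u}=\nr{(h^i{}_j)_u(u)\,\eta_1^j}\le cK_0^2$ ($c$ a harmless combinatorial constant from the $L_2$ norms on indices), which supplies the Lipschitz constant for the lemmas. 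Note $r$ does not enter, consistently with the claim that $K_1$ depends only on $K_0$. The estimates below are pointwise in $x$, so the lemmas are applied at each fixed $x$, exactly as in the preceding propositions.

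The key computation is the forcing difference. Since $\dl F$ is evaluated at a fixed $u$, the $h$-factor survives: $\dl F(x,t,u)=F_2(x,t,u)-F_1(x,t,u)=h^i{}_j(u)\,\dl\eta^j(x,t)$. Two features are then immediate: $\dl F(x,0,u)=h^i{}_j(u)\,\dl\eta^j(x,0)=0$, because $\eta_1(x,0)=\eta_2(x,0)$; and $\nr{\dl F_t(x,t,u)}=\nr{h^i{}_j(u)\,(\dl\eta_t)^j(x,t)}\le K_0\,M_0(\dl\eta_t,T)\le K_0\ep$. Hence Lemma~\ref{lemma:ODE-t-contraction} applies on $S$ with $K:=cK_0^2$ and the lemma's ``$\ep$'' replaced by $K_0\ep$, giving for $0\le t\le T$
\begin{equation*}
\nr{\dl\gm(x,t)}\le K_0\ep\,K^{-2}\bigl(\exp(Kt)-Kt-1\bigr),\qquad
\nr{\dl\gm_t(x,t)}\le K_0\ep\,K^{-1}\bigl(\exp(Kt)-1\bigr).
\end{equation*}

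To finish I would use the elementary bounds $\exp(Kt)-Kt-1\le\tfrac12(Kt)^2\exp(KT)$ and $\exp(Kt)-1\le Kt\exp(KT)$ for $0\le t\le T$, which turn these into $\nr{\dl\gm}\le\tfrac12K_0\,T^2\exp(KT)\,\ep$ and $\nr{\dl\gm_t}\le K_0\,T\exp(KT)\,\ep$. Taking the supremum over $x$ and $t\in[0,T]$, bounding $T^2\le T$ and $\exp(KT)$ by a constant depending only on $K_0$ (on the relevant short time scale $T\le1$, exactly as in Propositions~\ref{proposition:O-gm-sol} and~\ref{proposition:O-eta-difference}), and collecting everything into one constant $K_1$ yields $M_0(\dl\gm,T)\le K_1T\ep$ and $M_0(\dl\gm_t,T)\le K_1T\ep$, i.e.\ $M_{0,1}(\dl\gm,T)\le K_1T\ep$.

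The only point needing attention is that the factor $T$ appears in the bound on $\dl\gm_t$ as well, not just on $\dl\gm$: it is produced precisely by $\dl\eta$ vanishing at $t=0$, which makes the forcing discrepancy $\dl F$ of size $O(t)$ uniformly in $u$ rather than merely $O(1)$. Equivalently one may bypass Lemma~\ref{lemma:ODE-t-contraction}: observe $\nr{\dl\eta(x,t)}\le\int_0^t\nr{\dl\eta_t}\,d\tau\le\ep t\le\ep T$, hence $\nr{\dl F}\le K_0\ep T$ on $S$, and apply Lemma~\ref{lemma:ODE-t-difference} with $K:=cK_0^2$ and ``$\ep$''$:=K_0\ep T$; this weaker route reaches the same conclusion. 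In either form the argument is a direct specialization of the lemmas already proved, completely parallel to the proof of Proposition~\ref{proposition:O-eta-difference}, so no genuine obstacle remains.
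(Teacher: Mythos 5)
Your proposal is correct and is essentially the paper's own argument: both write $F_k(x,t,u)=h^i{}_j(u)\eta_k^j$, observe that $\dl F(x,0,u)=0$ and $\nr{\dl F_t}\le K_0\ep$ while $\nr{(F_1)_u}$ is bounded in terms of $K_0$, and then invoke Lemma~\ref{lemma:ODE-t-contraction} (with the same implicit restriction $T\le1$ to convert $\exp(Kt)-Kt-1$ and $\exp(Kt)-1$ into multiples of $t$). Your closing remark via Lemma~\ref{lemma:ODE-t-difference} is a valid alternative but adds nothing beyond the paper's route.
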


\begin{proof}
From the assumption, we have $\dl F(x,0,u)=0$, and there is a positive constant $K_1$ depending only on $K_0$ such that $\nr{F_u}\le K_1$ and $\nr{F_t}\le K_1\ep$ on the time interval $[0,T]$.
Therefore, from Lemma \ref{lemma:ODE-t-contraction},
we have $\nr{\dl\gm(x,t)}\le\ep K_2t$ and $\nr{\dl\gm_t(x,t)}\le\ep K_2t$
where $K_2=\max\{K_1^{-1}(\exp(K_1)-K_1-1),\exp(K_1)-1\}$.
\end{proof}

\subsection
{Estimation for $\rm(W_\xi)$}
\label{subsection:W-xi}

We transform equation $\rm(W_\xi)$ $(D_t^2\xi-D_x^2\xi)^\perp=\te^\perp$ as follows.
This transformation is justified under the assumption $\xi\in C^2$.
Because $\nr\xi^2=1$,
we have
$g(D_x^2\xi,\xi)=-\nr{D_x\xi}^2$,
$(D_x^2\xi)^\perp=D_x^2\xi+\nr{D_x\xi}^2\xi$.
Similarly, we have
$(D_t^2\xi)^\perp=D_t^2\xi+\nr{D_t\xi}^2\xi$.
Therefor equation $\rm(W_\xi)$ is transformed to
\begin{equation}
\label{eq:xi-wave-rep}
D_t^2\xi-D_x^2\xi=(\nr{D_x\xi}^2-\nr{D_t\xi}^2)\xi+\te-g(\te,\xi)\xi.
\end{equation}

Equation \eqref{eq:xi-wave-rep} is an equation for $\xi(x,t)$, for given $\gm(x,t)$ and $\te(x,t)$.
It can be written as a semilinear hyperbolic equation
\begin{equation}
\xi_{tt}-\xi_{xx}=\text{(lower order non-linear terms)}f+h_x
\end{equation}
using a local coordinate system.
We use the term $h_x$ when we cannot estimate $h_x$ but have bounds of $h_t$.
And, the semilinear hyperbolic equation is transformed to an integral equation
\begin{equation}
\label{eq:wave-integral-equation}
\begin{aligned}
\xi(x,t)
&=\frac12\{a(x+t)+a(x-t)\}+\frac12\int_{x-t}^{x+t}b(y)\,dy
+\frac12\int_0^t\int_{x-(t-\tau)}^{x+(t-\tau)}f(y,\tau)\,dyd\tau\\
&\qquad+\frac12\int_0^th(x+(t-\tau),\tau)-h(x-(t-\tau),\tau)\,d\tau.
\end{aligned}
\end{equation}
The functions $a$ and $b$ are initial values: $a(x)=\xi(x,0)$ and $b(x)=\xi_t(x,0)$.

We write $I(a,b,f,h)$ the right hand side of \eqref{eq:wave-integral-equation}.
We need some caution to show regularity of solutions to the equation $u=I(a,b,f,h)$,
because we assume only $\gm\in\Cls01$.
In addition, the variable $x$ runs in ${\bf R}$ and functions are periodic with period $1$ with respect to $x$ in this expression.

The term of integration of $h$ becomes, by integration by substitution: $x\pm(t-\tau)=y$,
\begin{equation}
\int_0^th(x\pm(t-\tau),\tau)\,d\tau
=\mp\int_{x\pm t}^xh(y,t\mp(y-x))\,dy,
\end{equation}
and,
\begin{align}
&\begin{aligned}
&\pd_x\int_0^th(x\pm(t-\tau),\tau)\,d\tau
=\mp\{h(x,t)-h(x\pm t,0)\pm\int_{x\pm t}^xh_t(y,t\mp(y-x))\,dy\}\\
&\qquad=\pm\{h(x\pm t,0)-h(x,t)\}\pm\int_0^th_t(x\pm(t-\tau),\tau)\,d\tau,
\end{aligned}\\
&\begin{aligned}
&\pd_t\int_0^th(x\pm(t-\tau),\tau)\,d\tau
=\mp\{\mp h(x\pm t,0)+\int_{x\pm t}^xh_t(y,t\mp(y-x))\,dy\}\\
&\qquad=h(x\pm t,0)+\int_0^th_t(x\pm(t-\tau),\tau)\,d\tau.
\end{aligned}
\end{align}
Therefore, by defining
\begin{equation}
2\wh h(x,t):=h(x+t,0)+h(x-t,0)-2h(x,t),
\end{equation}
we have
\begin{align}
&\begin{aligned}
2u_x(x,t)&=\{a'(x+t)+a'(x-t)\}+\{b(x+t)-b(x-t)\}\\
&\qquad+\int_0^tf(x+(t-\tau),\tau)-f(x-(t-\tau),\tau)\,d\tau\\
&\qquad+2\wh h(x,t)
+\int_0^th_t(x+(t-\tau),\tau)+h_t(x-(t-\tau),\tau)\,d\tau,\\
\end{aligned}\\
&\begin{aligned}
2u_t(x,t)&=\{a'(x+t)-a'(x-t)\}+\{b(x+t)+b(x-t)\}\\
&\qquad+\int_0^tf(x+(t-\tau),\tau)+f(x-(t-\tau),\tau)\,d\tau\\
&\qquad+\{h(x+t,0)-h(x-t,0)\}\\
&\qquad+\int_0^th_t(x+(t-\tau),\tau)-h_t(x-(t-\tau),\tau)\,d\tau.
\end{aligned}
\end{align}

Thus we get the following formula.
\begin{lemma}
\label{lemma:I(a,b,f,h)-C1-formula}
Let $u^\pm(x,t):=u_x(x,t)\pm u_t(x,t)$.
Then we have
\begin{equation}
\begin{aligned}
u^\pm(x,t)&=a'(x\pm t)\pm b(x\pm t)
\pm\int_0^tf(x\pm(t-\tau),\tau)\,d\tau\\
&\quad+\{h(x\pm t,0)-h(x,t)\}
+\int_0^th_t(x\pm(t-\tau),\tau)\,d\tau.
\end{aligned}
\end{equation}
\end{lemma}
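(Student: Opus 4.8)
The plan is to deduce the formula for $u^\pm$ directly from the two identities for $2u_x(x,t)$ and $2u_t(x,t)$ established immediately above the statement. Since $u^\pm := u_x \pm u_t$, I would simply add those identities (for the upper sign) or subtract them (for the lower sign) and divide by $2$. No new analytic input is needed: the continuity and periodicity of $a$, $b$, $f$, $h$, $h_t$ (and differentiability of $a$) that justify differentiating $I(a,b,f,h)$ under the integral sign were already used to obtain the $2u_x$ and $2u_t$ formulas.

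Carrying out the combination, the terms fall into four groups. The $a'$-contributions $\{a'(x+t)+a'(x-t)\}$ and $\{a'(x+t)-a'(x-t)\}$ collapse to $2a'(x\pm t)$ once the $a'(x\mp t)$ pieces cancel, and similarly $\{b(x+t)-b(x-t)\}$ combined with $\pm\{b(x+t)+b(x-t)\}$ collapses to $\pm 2b(x\pm t)$. The two $f$-integrals (one odd, one even under reflection of the spatial argument about $x$) combine to $\pm 2\int_0^t f(x\pm(t-\tau),\tau)\,d\tau$, and the two $h_t$-integrals combine to $2\int_0^t h_t(x\pm(t-\tau),\tau)\,d\tau$. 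The one step that is not mere symmetry is the $h$-boundary group: I would substitute $2\wh h(x,t)=h(x+t,0)+h(x-t,0)-2h(x,t)$ into the $2u_x$-identity and combine it with the term $\pm\{h(x+t,0)-h(x-t,0)\}$ coming from $2u_t$; the $h(x\mp t,0)$ pieces cancel, leaving $2\{h(x\pm t,0)-h(x,t)\}$. Dividing by $2$ gives exactly the asserted expression.

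I do not expect any genuine obstacle here — the lemma is a bookkeeping reorganization of identities already proved. The only thing requiring care is the sign bookkeeping in the $\pm$/$\mp$ alternations, so I would write the $+$ and $-$ cases out separately rather than invoke a symmetry argument.
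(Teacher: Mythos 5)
Your proposal is correct and is exactly how the paper obtains the lemma: the paper derives the displayed formulas for $2u_x$ and $2u_t$ and then states the lemma with no further argument, the combination $u^\pm=\frac12(2u_x\pm 2u_t)$ being left implicit. Your sign bookkeeping, including the cancellation of the $h(x\mp t,0)$ pieces against $2\wh h$, checks out in both cases.
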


We can solve the semilinear integral equation \eqref{eq:wave-integral-equation} by a standard contraction method.
However, we explain details of it for completeness and preparation of estimation of $C^2$-norm.
Also, we use the following norm for initial data.
\begin{equation}
m_0(a,a',b):=m_0(a)+m_0(a')+m_0(b).
\end{equation}

By Lemma \ref{lemma:I(a,b,f,h)-C1-formula}, we have
\begin{equation}
\begin{aligned}
m_0(u)(t)&\le m_0(a)+tm_0(b)
	+\int_0^t(t-\tau)m_0(f)(\tau)\,d\tau
	+\int_0^tm_0(h)(\tau)\,d\tau,\\
m_0(u_x)(t)&\le m_0(a')+m_0(b)
	+\int_0^tm_0(f)(\tau)\,d\tau
	+m_0(\wh h)(t)
	+\int_0^tm_0(h_t)(\tau)\,d\tau,\\
m_0(u_t)(t)&\le m_0(a')+m_0(b)
	+\int_0^tm_0(f)(\tau)\,d\tau
	+\int_0^tm_0(h_t)(\tau)\,d\tau.
\end{aligned}
\end{equation}
Summing up them, we get the following estimation.

\begin{lemma}
\label{lemma:I(a,b,f,h)-C1-estimation}
The function $u=I(a,b,f,h)$ is estimated as follows.
\begin{equation}
\begin{aligned}
m_1(u)(t)&\le(2+t)m_0(a,a',b)+(2+t)\int_0^tm_0(f)(\tau)\,d\tau\\
&\qquad+\int_0^tm_0(h)(\tau)\,d\tau
+m_0(\wh h)(t)+2\int_0^tm_0(h_t)(\tau)\,d\tau,\\
M_1(u,T)&\le(2+T)m_0(a,a',b)+T(2+T)M_0(f,T)\\
&\qquad+TM_0(h,T)+M_0(\wh h,T)+2TM_0(h_t,T).
\end{aligned}
\end{equation}
\end{lemma}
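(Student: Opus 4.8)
\textit{Proof proposal.} The plan is to simply assemble the three scalar estimates for $m_0(u)$, $m_0(u_x)$ and $m_0(u_t)$ displayed just above the statement (which follow from Lemma \ref{lemma:I(a,b,f,h)-C1-formula} by taking pointwise norms and crude bounds on the integrands), and then to bookkeep the coefficients. First I would add the three inequalities, using that $m_1(u)=m_0(u)+m_0(u_x)+m_0(u_t)$ by definition. The initial-data contributions collect as $m_0(a)+2m_0(a')+(2+t)m_0(b)$, and since $1\le 2$ and $2\le 2+t$ this is at most $(2+t)\,m_0(a,a',b)$.

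Next I would group the three integrals of $m_0(f)$: they appear with the weights $t-\tau$, $1$ and $1$, whose sum is $t-\tau+2\le t+2$, so the total $f$-contribution is at most $(2+t)\int_0^tm_0(f)(\tau)\,d\tau$. The terms built from $h$, $\wh h$ and $h_t$ occur respectively once, once and twice, and simply carry over unchanged; this gives the first asserted inequality.

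Finally, for the $M_1$ bound I would take the supremum over $t\in[0,T]$ of the right-hand side of the first inequality: replace every factor $(2+t)$ by $(2+T)$, bound each integral of a quantity $m_0(\cdot)(\tau)$ over $[0,t]\subseteq[0,T]$ by $t\,M_0(\cdot,T)\le T\,M_0(\cdot,T)$, and bound $m_0(\wh h)(t)$ by $M_0(\wh h,T)$. This produces $(2+T)\,m_0(a,a',b)+T(2+T)M_0(f,T)+TM_0(h,T)+M_0(\wh h,T)+2TM_0(h_t,T)$, as claimed.

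I do not expect a genuine obstacle here; the lemma is a purely arithmetic consequence of the formula in Lemma \ref{lemma:I(a,b,f,h)-C1-formula}. The only point needing slight care is the asymmetric weight $t-\tau$ on the $f$-integral coming from the \emph{double} time integration present in $u$ itself (whereas $u_x$ and $u_t$ involve only a single time integration); bounding $t-\tau$ by $t$ is precisely what forces the uniform factor $(2+t)$ in front of the $f$-term rather than something smaller.
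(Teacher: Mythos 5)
Your proposal is correct and is exactly the paper's argument: the paper displays the three bounds for $m_0(u)$, $m_0(u_x)$, $m_0(u_t)$ immediately before the lemma and obtains the statement by "summing up them," with the same bookkeeping ($m_0(a)+2m_0(a')+(2+t)m_0(b)\le(2+t)m_0(a,a',b)$, the weight $t-\tau+2\le 2+t$ on the $f$-integral, and the passage to suprema for the $M_1$ bound). Nothing further is needed.
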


We gather estimations of $M_0(\wh h,T)$ which we need later.
We define domain $S=S(T,r):=\{(x,t,u)\mid0\le t\le T,\nr u\le r\}$
and $S=S(T,r):=\{(x,t,u,p,q)\mid0\le t\le T,\nr u+\nr p+\nr q\le r\}$.
For continuous functions $H(x,t,u)$ and $F(x,t,u,p,q)$, we define
\begin{equation}
\begin{aligned}
\max_S\{\nr H\}&:=\max\{\nr{H(x,t,u)}\mid(x,t,u)\in S\},\\
\max_S\{\nr F\}&:=\max\{\nr{H(x,t,u,p,q)}\mid(x,t,u,p,q)\in S\}.
\end{aligned}
\end{equation}
And, for plural functions $F_1$, $F_2$, etc.,
we write shortly
\begin{equation}
\max_S\{\nr{F_1},\nr{F_2}\}:=\max\{\max_S\{\nr{F_1}\},\max_S\{\nr{F_2}\}\},
\end{equation}
etc.
For a continuous function $H(x,t,u)$, we define
\begin{equation}
Z(t,r):=\max\{\nr{H(x_1,0,u)-H(x_2,0,u)}\mid\nr{x_1-x_2}\le t,\nr{u}\le r\}
\end{equation}
Note that $Z(0,r)=0$ and that $Z$ depends only on $H(x,0,u)$.
If $H(x,0,u)$ is differentiable with respect to $x$, then $Z(t,r)\le\max\{\nr{H_x(x,0,u)}\mid\nr u\le r\}\cdot t$.
And, we define
\begin{equation}
2\wh H(x,t,u):=H(x+t,0,u)+H(x-t,0,u)-2H(x,t,u).
\end{equation}
Moreover, for $u=u(x,t)$, we define
\begin{equation}
\begin{aligned}
H(u)(x,t)&:=H(x,t,u(x,t)),\\
\wh H(u)(x,t)&:=H(u)(x+t,0)+H(u)(x-t,0)-2H(u)(x,t)
\end{aligned}
\end{equation}
Then,
\begin{equation}
\begin{aligned}
2\wh H(x,t,u)&=\{H(x+t,0,u)-H(x,0,u)\}+\{H(x-t,0,u)-H(x,0,u)\}\\
&\qquad-2\{H(x,t,u)-H(x,0,u)\},
\end{aligned}
\end{equation}
and we have the following estimation.

\begin{lemma}
\label{lemma:htH-Z-estimation}
If the function $H(x,t,u)$ is continuous and $H_t$ is continuous,
then
\begin{equation}
\nr{\wh H(x,t,u)}\le Z(t,\nr u)+t\max_{S(t,\nr u)}\{\nr{H_t}\}.
\end{equation}
\end{lemma}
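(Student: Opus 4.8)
The plan is to use the three-term decomposition of $2\wh H$ displayed just before the statement, namely
\[
2\wh H(x,t,u)=\{H(x+t,0,u)-H(x,0,u)\}+\{H(x-t,0,u)-H(x,0,u)\}-2\{H(x,t,u)-H(x,0,u)\},
\]
and to estimate each of the three brackets separately. For the first two brackets, which only involve the slice $t=0$, I would observe that $\nr{(x\pm t)-x}=t$, so by the very definition of $Z(t,r)$ we get $\nr{H(x\pm t,0,u)-H(x,0,u)}\le Z(t,\nr u)$.

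For the third bracket I would invoke the fundamental theorem of calculus in the $t$ variable: since $H_t$ is assumed continuous, $H(x,t,u)-H(x,0,u)=\int_0^t H_t(x,\tau,u)\,d\tau$. For every $\tau\in[0,t]$ the point $(x,\tau,u)$ lies in $S(t,\nr u)$ (the $u$-coordinate is unchanged and $\tau\le t$), so this integral is bounded in norm by $t\max_{S(t,\nr u)}\{\nr{H_t}\}$.

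Combining the three estimates and dividing by $2$ then gives
\[
\nr{\wh H(x,t,u)}\le\frac12\bigl(2Z(t,\nr u)+2t\max_{S(t,\nr u)}\{\nr{H_t}\}\bigr)=Z(t,\nr u)+t\max_{S(t,\nr u)}\{\nr{H_t}\},
\]
which is the claimed inequality. There is essentially no obstacle in this argument; the only point requiring a moment of care is checking that the argument $(x,\tau,u)$ remains inside the domain $S(t,\nr u)$ over which the supremum of $\nr{H_t}$ is taken, which is immediate.
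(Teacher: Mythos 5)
Your proof is correct and is exactly the argument the paper intends: the displayed three-term decomposition of $2\wh H$ immediately preceding the lemma is the whole setup, with the first two brackets controlled by the definition of $Z(t,\nr u)$ and the third by integrating $H_t$ in time. The paper omits the proof entirely, and your write-up supplies precisely the missing details, including the (easy but worth noting) check that $(x,\tau,u)$ stays in $S(t,\nr u)$.
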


Now preparations to prove existence of solutions to the semilinear integral wave equation $u=I(a,b,F(u),H(u))$ are complete.

\begin{lemma}
\label{lemma:wave-eq-C1-existence}
Assume that the functions $f$ and $h$ are expressed as
$f(x,t)=F(u):=F(x,t,u,u_x,u_t)$ and $h(x,t)=H(u):=H(x,t,u)$ respectively,
where functions $F$ and $H$ are continuous and $F_u$, $F_p$, $F_q$, $H_t$, $H_u$, $H_{tu}$ and $H_{uu}$ are continuous.
If initial value satisfies $a\in C^1(x)$ and $b\in C^0(x)$,
then the semilinear integral equation $u=I(a,b,F(u),H(u))$ has a unique short-time solution $u(x,t)\in C^1$.
\end{lemma}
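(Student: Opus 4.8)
The plan is to realize $u$ as the fixed point of $\Phi(u):=I(a,b,F(u),H(u))$ by the contraction mapping theorem, using Lemmas~\ref{lemma:I(a,b,f,h)-C1-formula}, \ref{lemma:I(a,b,f,h)-C1-estimation} and \ref{lemma:htH-Z-estimation}. Fix $r>2m_0(a,a',b)$ and, for $0<T\le1$, let $\mathcal B_{r,T}$ be the set of $u\in C^1(S^1\times[0,T])$ with $M_1(u,T)\le r$ and $u(x,0)=a(x)$; with the metric induced by $M_1(\cdot,T)$ this is a nonempty (it contains the $t$-independent function $a$) complete metric space, since a sequence Cauchy in $M_1$ converges uniformly together with its first derivatives and the two constraints are closed. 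For $u\in\mathcal B_{r,T}$ the point $(x,t,u,u_x,u_t)$ lies in the compact set $\{0\le t\le T,\ \nr u+\nr{u_x}+\nr{u_t}\le r\}$ (and, after shrinking $T$, stays in the domains of $F$ and $H$, since $\nr{u(x,t)-a(x)}\le Tr$), so $F(u)=F(\cdot,\cdot,u,u_x,u_t)$, $H(u)=H(\cdot,\cdot,u)$ and $(H(u))_t=H_t(\cdot,\cdot,u)+H_u(\cdot,\cdot,u)u_t$ are continuous and bounded by a constant depending only on $r$; in particular $\Phi(u)\in C^1$ by Lemma~\ref{lemma:I(a,b,f,h)-C1-formula}, and $\Phi(u)(x,0)=a(x)$, so $\Phi$ preserves the $t=0$ constraint of $\mathcal B_{r,T}$.

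Next I would check the self-map property. By Lemma~\ref{lemma:I(a,b,f,h)-C1-estimation},
\[
\begin{aligned}
M_1(\Phi u,T)&\le(2+T)m_0(a,a',b)+T(2+T)M_0(F(u),T)\\
&\quad+TM_0(H(u),T)+M_0(\wh{H(u)},T)+2TM_0((H(u))_t,T).
\end{aligned}
\]
Every term except the first and $M_0(\wh{H(u)},T)$ carries a factor $T$ and a coefficient bounded in terms of $r$. For the hat term, since $u(\cdot,0)=a$, I would write
\[
\begin{aligned}
\wh{H(u)}(x,t)&=\bigl[H(x+t,0,a(x+t))-H(x,0,a(x))\bigr]+\bigl[H(x-t,0,a(x-t))-H(x,0,a(x))\bigr]\\
&\quad-2\bigl[H(x,t,u(x,t))-H(x,0,a(x))\bigr],
\end{aligned}
\]
bound the first two brackets by the modulus of continuity $Z_a(t):=\sup\{\,\nr{H(x_1,0,a(x_1))-H(x_2,0,a(x_2))}:\nr{x_1-x_2}\le t\,\}$, which is finite and tends to $0$ as $t\to0$ because $x\mapsto H(x,0,a(x))$ is continuous on the circle, and bound the last bracket by $t\bigl(\sup\nr{H_t}+r\sup\nr{H_u}\bigr)$ via the mean value theorem in $t$ and in $u$ together with $\nr{u(x,t)-u(x,0)}\le tM_0(u_t,T)$. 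Thus $M_0(\wh{H(u)},T)\le2Z_a(T)+O(T)\to0$ as $T\to0$, so the right-hand side above tends to $2m_0(a,a',b)<r$ and hence is $\le r$ for $T$ small; that is, $\Phi(\mathcal B_{r,T})\subset\mathcal B_{r,T}$.

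For the contraction estimate, take $u_1,u_2\in\mathcal B_{r,T}$; linearity of $I$ in its last two arguments gives $\Phi u_1-\Phi u_2=I(0,0,F(u_1)-F(u_2),H(u_1)-H(u_2))$, and Lemma~\ref{lemma:I(a,b,f,h)-C1-estimation} yields
\[
\begin{aligned}
M_1(\Phi u_1-\Phi u_2,T)&\le T(2+T)M_0(\dl F,T)+TM_0(\dl H,T)\\
&\quad+M_0(\wh{H(u_1)}-\wh{H(u_2)},T)+2TM_0((\dl H)_t,T),
\end{aligned}
\]
with $\dl F:=F(u_1)-F(u_2)$ and $\dl H:=H(u_1)-H(u_2)$. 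The mean value theorem and continuity of $F_u,F_p,F_q$ give $M_0(\dl F,T)\le C\,M_1(u_1-u_2,T)$; continuity of $H_u$ gives $M_0(\dl H,T)\le C\,M_0(u_1-u_2,T)$; and continuity of $H_u,H_{tu},H_{uu}$ gives, after expanding $(\dl H)_t=[H_t(x,t,u_1)-H_t(x,t,u_2)]+[H_u(x,t,u_1)u_{1,t}-H_u(x,t,u_2)u_{2,t}]$, the bound $M_0((\dl H)_t,T)\le C\,M_1(u_1-u_2,T)$. Finally, because $u_1(\cdot,0)=u_2(\cdot,0)=a$, the $t=0$ contributions to the hats cancel, so $\wh{H(u_1)}(x,t)-\wh{H(u_2)}(x,t)=-2\bigl(H(x,t,u_1(x,t))-H(x,t,u_2(x,t))\bigr)$ and hence $M_0(\wh{H(u_1)}-\wh{H(u_2)},T)\le2\sup\nr{H_u}\cdot TM_0((u_1-u_2)_t,T)$ — this term too carries a factor $T$, since $u_1-u_2$ vanishes at $t=0$. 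Therefore $M_1(\Phi u_1-\Phi u_2,T)\le\kappa(T)\,M_1(u_1-u_2,T)$ with $\kappa(T)\to0$ as $T\to0$; choosing $T$ small enough that $\kappa(T)<1$ and the self-map bound holds, the contraction mapping theorem provides a unique fixed point $u\in\mathcal B_{r,T}\subset C^1$, which is the desired short-time solution.

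I expect the only genuinely delicate point to be the treatment of $\wh H$, i.e.\ of the ``$h_x$-type'' term that cannot be controlled directly — which is why continuity of $H_x$ is not among the hypotheses: in the self-map step it is absorbed by the vanishing of the modulus of continuity $Z_a(T)$, and in the contraction step by the cancellation of the initial values of $u_1$ and $u_2$, which restores the missing factor $T$. Everything else is the standard Banach fixed-point scheme.
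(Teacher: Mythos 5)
Your proposal is correct and follows essentially the same route as the paper: a Banach fixed-point argument in the $M_1$ norm, with the self-map step controlling $\wh{H(u)}$ through the modulus of continuity of the data at $t=0$ (the paper's $Z(t,r)$) and the contraction step exploiting the cancellation of initial values to recover a factor of $T$ from the hat term. The only discrepancy is a harmless constant: with the paper's normalization $2\wh h=h(x+t,0)+h(x-t,0)-2h(x,t)$, the difference of hats equals $-(H(u_1)-H(u_2))$ at $(x,t)$ rather than $-2(H(u_1)-H(u_2))$.
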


\begin{proof}
We choose a real number $r_0>2m_0(a,a',b)$, and consider on the domain $S(T_0,r_0)$ hereinafter.
We assume that
\begin{equation}
\max_{S(T_0,r_0)}\{\nr{F},\nr{F_u},\nr{F_p},\nr{F_q},\nr{H},\nr{H_t},\nr{H_u},\nr{H_{tu}},\nr{H_{uu}}\}
\le K_0.
\end{equation}
And, using $Z(t,r)$ in Lemma\ref{lemma:htH-Z-estimation}, we estimate $\wh H$ as
\begin{equation}
\nr{\wh H(x,t,u)}
\le Z(t,r_0)+t\max_{S(t,r_0)}\{\nr{H_t}\}
\le Z(t,r_0)+tK_0.
\end{equation}

For $u(x,t)$ satisfying $M_1(u,T_0)\le r_0$, let $f(x,t):=F(u)$ and $h(x,t):=H(u)$.
Then we have
\begin{equation}
\begin{aligned}
m_0(f)(t)&\le\max_{S(t,m_1(u)(t))}\{\nr F\}
\le K_0,\\
m_0(h)(t)&\le\max_{S(t,m_0(u)(t))}\{\nr H\}
\le K_0,\\
m_0(h_t)(t)&=m_0(H_t+H_uu_t)(t)
\le\max_{S(t,m_0(u)(t))}\{\nr{H_t},\nr{H_u}\}(1+m_1(u)(t))\\
&\le K_0(1+r_0).
\end{aligned}
\end{equation}
And,
\begin{equation}
\begin{aligned}
2\wh h(x,t)&=H(x+t,0,u(x+t,0))+H(x-t,0,u(x-t,0))
-2H(x,t,u(x,t))\\
&=2\wh H(x,t,u(x,t))+(H(x+t,0,u(x+t,0))-H(x+t,0,u(x,t)))\\
&\qquad\qquad+(H(x-t,0,u(x-t,0))-H(x-t,0,u(x,t))).
\end{aligned}
\end{equation}

Here, because
\begin{equation}
\begin{aligned}
&m_0(\wh H(x,t,u(x,t)))
\le Z(t,r_0)+tK_0,\\
&\nr{H(x\pm t,0,u(x\pm t,0))-H(x\pm t,0,u(x,t))}\\
&\qquad\le\max_{S(0,M_0(u,t))}\{\nr{H_u}\}\cdot\nr{u(x\pm t,0)-u(x,t)}\\
&\qquad\le K_0\cdot(M_0(u_t,t)+M_0(u_x,t))t
\le tK_0r_0,
\end{aligned}
\end{equation}
we get
\begin{equation}
m_0(\wh h)
\le Z(t,r_0)+tK_0+tK_0r_0
=Z(t,r_0)+tK_0(1+r_0).
\end{equation}

Therefore,
\begin{equation}
\begin{aligned}
&M_0(f,T),M_0(h,T)\le K_0,\quad M_0(h_t,T)\le K_0(1+r_0),\\
&M_0(\wh h,T)\le Z(T,r_0)+TK_0(1+r_0).
\end{aligned}
\end{equation}
And, by Lemma \ref{lemma:I(a,b,f,h)-C1-estimation},
$w(x,t):=I(a,b,F(u),H(u))$ satisfies
\begin{equation}
\label{eq:M1(w,T)-estimate}
\begin{aligned}
M_1(w,T)&\le(2+T)m_0(a,a',b)+T(2+T)K_0+TK_0+2TK_0(1+r_0)\\
&\qquad+Z(T,r_0)+TK_0(1+r_0)\\
&\le(2+T)m_0(a,a',b)+T(6+T)K_0(1+r_0)+Z(T,r_0).
\end{aligned}
\end{equation}

Let $\al(t)$ be a monotone increasing function with respect to $t$
defined by
\begin{equation}
\al(t):=(2+t)m_0(a,a',b)+t(6+t)K_0(1+r_0)+Z(t,r_0),
\end{equation}
and we express the last side of \eqref{eq:M1(w,T)-estimate} by $\al(T)$.
We choose $u_0(x,t)$ as
\begin{equation}
u_0(x,t):=a(x)+\frac12\int_{x-t}^{x+t}b(y)\,dy,
\end{equation}
which satisfies the initial condition.
Since
$\al(0)=2m_0(a,a',b)<r_0$ and $M_1(u_0,0)=m_0(a,a',b)<r_0$,
we can choose $0<T_1\le T_0$ such that
$\al(T_1)$, $M_1(u_0,T_1)\le r_0$.
We define a set of functions $\mathcal S$ on $S^1\times[0,T_1]$ by
$\mathcal S:=\{u\in C^1\mid\text{$u$ satisfies the initial condition, } M_1(u,T_1)\le r_0\}$.
Since $M_1(u_0,T_1)\le r_0$, the set $\mathcal S$ is not empty.
From the above estimation, $w=\Lm(u):=I(a,b,F(u),H(u))$ satisfies $M_1(w,T_1)\le\al(T_1)\le r_0$ for $u\in\mathcal S$,
and $\Lm$ is a mapping from $\mathcal S$ to $\mathcal S$.

For $u_1$, $u_2\in\mathcal S$, let
$w_i=\Lm(u_i)$, $\dl u=u_2-u_1$, $\dl w=w_2-w_1$,
$\dl f=F(u_2)-F(u_1)$ and $\dl h=H(u_2)-H(u_1)$.
Then we have $\dl w=I(0,0,\dl f,\dl h)$.
And,
\begin{equation}
\begin{aligned}
&\nr{\dl f}\le\max_{S(t,r_0)}\{\nr{F_u},\nr{F_p},\nr{F_q}\}
	(\nr{\dl u}+\nr{\dl u_x}+\nr{\dl u_t})
\le K_0m_1(\dl u),\\
&\nr{\dl h}\le\max_{S(t,r_0)}\{\nr{H_u}\}\nr{\dl u}
\le K_0m_1(\dl u).
\end{aligned}
\end{equation}

Moreover, because
\begin{equation}
\begin{aligned}
\dl h_t
&=H_t(u_2)+H_u(u_2)(u_2)_t-H_t(u_1)-H_u(u_1)(u_1)_t\\
&=\{H_t(u_2)-H_t(u_1)\}+\{H_u(u_2)(u_2)_t-H_u(u_1)(u_2)_t\}+H_u(u_1)(\dl u_t),
\end{aligned}
\end{equation}
we get
\begin{equation}
\begin{aligned}
\nr{\dl h_t}
&\le\max_{S(t,r_0)}\{\nr{H_{tu}},\nr{H_{uu}},\nr{H_u}\}(\nr{\dl u}+\nr{\dl u}\nr{(u_2)_t}+\nr{\dl u_t})
\le K_0(1+r_0)m_1(\dl u).
\end{aligned}
\end{equation}

We summarize the above into estimations
$\nr{\dl f}$, $\nr{\dl h}$, $\nr{\dl h_t}\le K_0(1+r_0)m_1(\dl u)$.
For $\dl\wh h$, since $u_1$ and $u_2$ have same initial value, we get
\begin{equation}
\begin{aligned}
&\dl\wh h(x,t)=-H(x,t,u_2(x,t))+H(x,t,u_1(x,t)),\\
&\nr{\dl\wh h}\le\max_{S(t,r_0)}\{\nr{H_u}\}\cdot\nr{\dl u(x,t)}
\le K_0\cdot M_0(\dl u_t,t)t
\le K_0M_1(\dl u,t)t.
\end{aligned}
\end{equation}

Hence, by Lemma \ref{lemma:I(a,b,f,h)-C1-estimation}, we have
\begin{equation}
\begin{aligned}
M_1(\dl w,T)&\le T(2+T)M_0(\dl f,T)
+TM_0(\dl h,T)+2TM_0(\dl h_t,T)+M_0(\dl\wh h,T)\\
&\le K_0(1+r_0)(T(2+T)+T+2T+T)M_1(\dl u,T)\\
&=K_0(1+r_0)T(6+T)M_1(\dl u,T).
\end{aligned}
\end{equation}
Now, we replace $T_1$ so that $K_0(1+r_0)T_1(6+T_1)<1$ holds.
Then, $\Lm:\mathcal S\to\mathcal S$ is a contraction map with respect to the norm $M_1$, and has a unique fixed point, i.e., a solution $u$.
\end{proof}

For later use, we give a refinement of this lemma.

\begin{lemma}
\label{lemma:wave-eq-M1-existence-uHC1}
Under assumption of Lemma \ref{lemma:wave-eq-C1-existence}, we also assume that $H(x,0,u)$ is differentiable with respect to $x$ and that the following conditions are satisfied.
The initial value satisfies $2m_0(a,a',b)<r_0$, and it holds that
\begin{equation}
\max_{S(T_0,r_0)}\{\nr{F},\nr{F_u},\nr{F_p},\nr{F_q},\nr{H},\nr{H_t},\nr{H_u},\nr{H_{tu}},\nr{H_{uu}}\},
\max_{S(0,r_0)}\{\nr{H_x}\}
\le K_0.
\end{equation}

Then, there exists a positive constant $T_1\le T_0$ depending only on
$m_0(a,a',b)$, $T_0$, $r_0$ and $K_0$,
such that the semilinear integral equation $u=I(a,b,F(u),H(u))$ has a unique solution $u(x,t)\in C^1$ on the time interval $[0,T_1]$ and it satisfies $M_1(u,T_1)\le r_0$.
\end{lemma}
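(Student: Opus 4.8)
The plan is to run the proof of Lemma~\ref{lemma:wave-eq-C1-existence} essentially verbatim, and to observe that the only place where the choice of $T_1$ was not visibly controlled by the listed quantities was the term $Z(T,r_0)$ entering the bounding function $\al$; under the extra hypothesis this term becomes linear in $T$, and the dependence becomes transparent. First I would recall the setup of that proof: fix $r_0$ with $2m_0(a,a',b)<r_0$, work on $S(T_0,r_0)$, and set $\Lm(u):=I(a,b,F(u),H(u))$. All the estimates obtained there, namely $M_0(f,T),M_0(h,T)\le K_0$, $M_0(h_t,T)\le K_0(1+r_0)$, and $M_0(\wh h,T)\le Z(T,r_0)+TK_0(1+r_0)$, go through unchanged, since they used only the bounds on $F$, $H$ and their first derivatives together with Lemma~\ref{lemma:htH-Z-estimation}. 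Hence by Lemma~\ref{lemma:I(a,b,f,h)-C1-estimation}, $w=\Lm(u)$ satisfies $M_1(w,T)\le\al(T)$ with
\begin{equation*}
\al(t)=(2+t)m_0(a,a',b)+t(6+t)K_0(1+r_0)+Z(t,r_0).
\end{equation*}

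The key new point is that, because $H(x,0,u)$ is differentiable with respect to $x$ and $\max_{S(0,r_0)}\{\nr{H_x}\}\le K_0$, the remark following the definition of $Z$ gives $Z(t,r_0)\le K_0t$. Therefore
\begin{equation*}
\al(t)\le(2+t)m_0(a,a',b)+t(6+t)K_0(1+r_0)+K_0t,
\end{equation*}
which is a continuous, monotone increasing function of $t$ depending only on $m_0(a,a',b)$, $r_0$ and $K_0$, and which satisfies $\al(0)=2m_0(a,a',b)<r_0$. Likewise the reference function $u_0(x,t):=a(x)+\frac12\int_{x-t}^{x+t}b(y)\,dy$ obeys $M_1(u_0,t)\le(2+t)m_0(a,a',b)$, so $M_1(u_0,0)<r_0$. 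Consequently there is a $T_1\le T_0$, depending only on $m_0(a,a',b)$, $T_0$, $r_0$ and $K_0$, such that $\al(T_1)\le r_0$ and $M_1(u_0,T_1)\le r_0$.

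Finally I would close exactly as in Lemma~\ref{lemma:wave-eq-C1-existence}: on the nonempty set $\mathcal S:=\{u\in C^1\mid u\text{ satisfies the initial condition},\ M_1(u,T_1)\le r_0\}$ over $S^1\times[0,T_1]$, the map $\Lm$ sends $\mathcal S$ into itself, and the difference estimate from that proof yields $M_1(\dl w,T_1)\le K_0(1+r_0)T_1(6+T_1)\,M_1(\dl u,T_1)$. Shrinking $T_1$ further so that $K_0(1+r_0)T_1(6+T_1)<1$ — again a condition involving only $K_0$ and $r_0$ — makes $\Lm$ a contraction for the $M_1$ norm, and its unique fixed point is the desired solution $u\in C^1$ with $M_1(u,T_1)\le r_0$. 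I do not expect a genuine obstacle here: the whole content of the refinement is the bound $Z(t,r_0)\le K_0t$, which replaces the a priori uncontrolled modulus of continuity of $H(\cdot,0,\cdot)$ in $x$ by an explicit linear function of $t$, thereby making the admissible time $T_1$ depend only on the stated constants.
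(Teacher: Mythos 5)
Your proposal is correct and follows exactly the paper's route: the paper's entire proof is the one-line observation that $Z(t,r_0)$ in the proof of Lemma \ref{lemma:wave-eq-C1-existence} may be replaced by $K_0t$, which is precisely the key point you identify and then carry through the contraction argument. Your write-up simply makes explicit the bookkeeping that the paper leaves implicit.
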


\begin{proof}
We can replace $Z(t,r_0)$ in Proof of Lemma \ref{lemma:wave-eq-C1-existence} by $K_0t$.
\end{proof}

Next, we study dependency of the solution $u$ on the functions $F$ and $H$.

\begin{lemma}
\label{lemma:wave-eq-C1-difference}
For indices $i=1,2$, let $F_i$ and $H_i$ be functions satisfying the condition of Lemma \ref{lemma:wave-eq-C1-existence}.
Assume that there exists a solution $u_i\in C^1$ on the time interval $[0,T]$,
and it satisfies $M_1(u_i,T)\le K$.
Assume also that their initial values are same and $H_1(x,0,u)=H_2(x,0,u)$.
We use notation: $\dl*=*_2-*_1$.
If, on $S:=S(T,K)$,
\begin{equation}
\begin{aligned}
&\max_S\{\nr{(F_1)_u},\nr{(F_1)_p},\nr{(F_1)_q},\nr{(H_1)_u},\nr{(H_1)_{tu}},\nr{(H_1)_{uu}}\}\le K,\\
&\max_S\{\nr{\dl F},\nr{\dl H},\nr{\dl H_t},\nr{\dl H_u}\}\le\ep
\end{aligned}
\end{equation}
then there exists a positive constant $C$ depending monotone increasingly only on $T$ and $K$ such that
\begin{equation}
M_1(\dl u,T)\le CT\ep.
\end{equation}
\end{lemma}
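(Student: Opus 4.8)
The plan is to reduce everything to Lemma~\ref{lemma:I(a,b,f,h)-C1-estimation} by exploiting the linearity of $I$ in its four arguments. Put $f_i:=F_i(u_i)$, $h_i:=H_i(u_i)$, $\dl f:=f_2-f_1$, $\dl h:=h_2-h_1$. Since $u_i=I(a,b,f_i,h_i)$ with the same $a,b$, linearity gives $\dl u=I(0,0,\dl f,\dl h)$. Because $u_1(x,0)=u_2(x,0)=a(x)$ and $H_1(x,0,u)=H_2(x,0,u)$, we get $\dl h(x,0)=0$, so in the notation of that lemma $\wh{\dl h}(x,t)=-\dl h(x,t)$, and therefore
\begin{equation*}
m_1(\dl u)(t)\le(2+t)\int_0^tm_0(\dl f)(\tau)\,d\tau+\int_0^tm_0(\dl h)(\tau)\,d\tau+2\int_0^tm_0(\dl h_t)(\tau)\,d\tau+m_0(\dl h)(t).
\end{equation*}
It then remains to bound $m_0(\dl f)$, $m_0(\dl h)$ and $m_0(\dl h_t)$ in terms of $\ep$ and $m_1(\dl u)$.

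For this I would split each difference at the ``frozen'' argument $u_2$: $\dl f=\dl F(u_2)+\{F_1(u_2)-F_1(u_1)\}$ and $\dl h=\dl H(u_2)+\{H_1(u_2)-H_1(u_1)\}$. In the bracketed terms the arguments run along the segment between $(u_1,(u_1)_x,(u_1)_t)$ and $(u_2,(u_2)_x,(u_2)_t)$, which lies in the relevant convex domain $S(T,K)$ because $M_1(u_i,T)\le K$; the bounds on $(F_1)_u,(F_1)_p,(F_1)_q$ and on $(H_1)_u$ then give $\nr{F_1(u_2)-F_1(u_1)}\le K\,m_1(\dl u)(t)$ and $\nr{H_1(u_2)-H_1(u_1)}\le K\,\nr{\dl u(x,t)}$. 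The first summands require the two vanishing-at-$t=0$ observations: $\nr{\dl F(u_2)}\le\ep$ directly, while $\dl H(x,0,u)=0$ together with $\nr{\dl H_t}\le\ep$ yields $\nr{\dl H(u_2)}\le\ep t$; and $\dl u(x,0)=0$ yields $\nr{\dl u(x,t)}\le\int_0^tm_0(\dl u_t)\,d\tau\le\int_0^tm_1(\dl u)\,d\tau$. Hence $m_0(\dl f)(t)\le\ep+K\,m_1(\dl u)(t)$ and, crucially, $m_0(\dl h)(t)\le\ep t+K\int_0^tm_1(\dl u)\,d\tau$. For $\dl h_t$ I would differentiate $h_i=H_i(x,t,u_i)$ by the chain rule, $h_{i,t}=(H_i)_t(u_i)+(H_i)_u(u_i)(u_i)_t$, split each of the two pieces again at $u_2$, and use $\nr{(u_2)_t}\le K$ together with the bounds on $(H_1)_{tu},(H_1)_{uu},(H_1)_u,\dl H_t,\dl H_u$; this produces $m_0(\dl h_t)(t)\le C_1(\ep+m_1(\dl u)(t))$ with $C_1$ depending only on $K$.

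Substituting these into the displayed inequality, every $\ep$-contribution either sits under a $\int_0^t$ or already carries an explicit factor $t$; this is the delicate point, since the one non-integrated term $m_0(\dl h)(t)$ — the one coming from $\wh{\dl h}$ — has to be $O(\ep t)$ rather than merely $O(\ep)$, and that is exactly what the identities $\dl H(x,0,\cdot)=0$ and $\dl u(x,0)=0$ provide. One thus obtains a constant $C_2$, built from $(2+T)$, $K$ and $C_1$ and hence monotone increasing in $T$ and $K$, such that
\begin{equation*}
m_1(\dl u)(t)\le C_2\,\ep\,t+C_2\int_0^tm_1(\dl u)(\tau)\,d\tau\qquad(0\le t\le T).
\end{equation*}
Finally I would close with Gronwall: the $C^1$ function $\psi(t):=C_2\ep t+C_2\int_0^tm_1(\dl u)$ satisfies $\psi'\le C_2\ep+C_2\psi$ and $\psi(0)=0$, whence $\psi(t)\le\ep(e^{C_2t}-1)\le C_2\ep t\,e^{C_2t}$, so $m_1(\dl u)(t)\le C_2e^{C_2T}\,\ep\,t$ and $M_1(\dl u,T)\le CT\ep$ with $C:=C_2e^{C_2T}$, which is monotone increasing in $T$ and $K$. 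The genuine obstacle here is not any single estimate but the bookkeeping — tracking which convex domain each difference lives in for the mean-value estimates, and making sure the $\wh{\dl h}$ term is upgraded to order $\ep t$ so that the final bound is proportional to $T$.
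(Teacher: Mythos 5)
Your proof is correct and follows essentially the same route as the paper: write $\dl u=I(0,0,\dl f,\dl h)$, split each difference at the frozen argument $u_2$, use $\dl H(x,0,\cdot)=0$ and $\dl u(x,0)=0$ to upgrade the non-integrated $\wh h$-contribution to order $\ep t$, and close with Gronwall. Your observation that $\wh{\dl h}=-\dl h$ outright is a slight streamlining of the paper's two-term decomposition of $\dl\wh h$, but the substance is identical.
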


\begin{proof}
Let $f_k=F_k(u_k)$ and $h_k=H_k(u_k)$.
Then, $\dl u=I(\dl a,\dl b,\dl f,\dl h)$, and
\begin{equation}
\begin{aligned}
&\dl f=F_2(u_2)-F_1(u_1)
=(\dl F)(u_2)+\{F_1(u_2)-F_1(u_1)\},\\
&\begin{aligned}
m_0(\dl f)
&\le\max_S\{\nr{\dl F}\}
+\max_S\{\nr{(F_1)_u},\nr{(F_1)_p},\nr{(F_1)_q}\}\cdot m_1(\dl u)
\le\ep+Km_1(\dl u).
\end{aligned}
\end{aligned}
\end{equation}
Similarly, from
\begin{equation}
\dl h=H_2(u_2)-H_1(u_1)
=(\dl H)(u_2)+\{H_1(u_2)-H_1(u_1)\},
\end{equation}
we get
\begin{equation}
m_0(\dl h)\le\max_S\{\nr{\dl H}\}+\max_S\{\nr{(H_1)_u}\}\cdot m_0(\dl u)
\le\ep+Km_0(\dl u).
\end{equation}

Since
\begin{equation}
\begin{aligned}
\dl h_t&=\{(H_2)_t(u_2)+(H_2)_u(u_2)(u_2)_t\}
-\{(H_1)_t(u_1)+(H_1)_u(u_1)(u_1)_t\}\\
&=(\dl H_t)(u_2)+(\dl H_u)(u_2)(u_2)_t+\{(H_1)_t(u_2)-(H_1)_t(u_1)\}\\
&\qquad+((H_1)_u(u_2)-(H_1)_u(u_1))(u_2)_t
+(H_1)_u(u_1)\dl u_t,
\end{aligned}
\end{equation}
we have
\begin{equation}
\begin{aligned}
m_0(\dl h_t)
&\le\max_S\{\nr{\dl H_t}\}+\max_S\{\nr{\dl H_u}\}\cdot m_1(u_2)
+\max_S\{\nr{(H_1)_{tu}}\}\cdot m_0(\dl u)\\
&\qquad+\max_S\{\nr{(H_1)_{uu}}\}\cdot m_0(\dl u)m_1(u_2)
+\max_S\{\nr{(H_1)_u}\}\cdot m_1(\dl u)\\
&\le\ep(1+K)+(2K+K^2)\cdot m_1(\dl u).
\end{aligned}
\end{equation}

Gathering the above and using a positive constant $K_1$ depending only on $K$,
we estimate as
$m_0(\dl f)$, $m_0(\dl h)$, $m_0(\dl h_t)\le K_1\ep+K_1m_1(\dl u)$.
For $\dl\wh h$, we have
\begin{equation}
\begin{aligned}
&\dl\wh h=\wh H_2(u_2)-\wh H_1(u_1)
=\dl\wh H(u_2)+\{\wh H_1(u_2)-\wh H_1(u_1)\},\\
&\dl\wh H(u_2)=-\dl H(x,t,u_2(x,t)),\\
&\nr{\dl\wh H(u_2)}\le\max_S\{\nr{\dl H_t}\}\cdot t
\le\ep t.
\end{aligned}
\end{equation}
And, from
\begin{equation}
\begin{aligned}
&2\nr{\wh H_1(u_2)-\wh H_1(u_1)}
\le\nr{H_1(x+t,0,u_2(x+t,0))-H_1(x+t,0,u_1(x+t,0))}\\
&\qquad\qquad+\nr{H_1(x-t,0,u_2(x-t,0))-H_1(x-t,0,u_1(x-t,0))}\\
&\qquad\qquad+2\nr{H_1(x,t,u_2(x,t))-H_1(x,t,u_1(x,t))}\\
&\qquad\le2\max_S\{\nr{(H_1)_u}\}m_0(\dl a)+2\max_S\{\nr{(H_1)_u}\}m_0(\dl u),
\end{aligned}
\end{equation}
we have
\begin{equation}
\begin{aligned}
m_0(\dl\wh h)
&\ep t+Km_0(\dl u)
\le\ep t+Km_0(\int_0^tm_0(\dl u_t)(\tau)\,d\tau)
\le\ep t+K\int_0^tm_1(\dl u)(\tau)\,d\tau.
\end{aligned}
\end{equation}

Then, by Lemma \ref{lemma:I(a,b,f,h)-C1-estimation}, it holds that
\begin{equation}
\begin{aligned}
m_1(\dl u)(t)&\le m_0(\dl\wh h)(t)
+\int_0^t(2+t)m_0(\dl f)(\tau)+m_0(\dl h)(\tau)+2m_0(\dl h_t)(\tau)\,d\tau\\
&\le T(6+T)K_1\ep
+(K+(5+T)K_1)\int_0^tm_1(\dl u)(\tau)\,d\tau.
\end{aligned}
\end{equation}

We express this inequality as
\begin{equation}
m_1(\dl u)(t)\le\ep_2+K_2\int_0^tm_1(\dl u)(\tau)\,d\tau
\end{equation}
and solve the integral inequality, and get
\begin{equation}
m_1(\dl u)(t)\le\ep_2\exp(K_2t),
\end{equation}
which is the conclusion.
\end{proof}

To apply above lemmas, we write explicitly the functions $f=F(\xi)$ and $h=H(\xi)$ in the semilinear integral equation transformed from Equation $\rm(W_\xi)$.
Since
\begin{equation}
\begin{aligned}
D_x^2\xi&=\pd_x(D_x\xi)+\Gm(\xi,D_x\xi)
=\xi_{xx}+\pd_x(\Gm(\xi,\xi))+\Gm(\xi,D_x\xi)),\\
D_t^2\xi&=\pd_t(D_t\xi)+\Gm(\eta,D_t\xi)
=\xi_{tt}+\pd_t(\Gm(\eta,\xi))+\Gm(\eta,D_t\xi)),
\end{aligned}
\end{equation}
the equation
$D_t^2\xi-D_x^2\xi=(\nr{D_x\xi}^2-\nr{D_t\xi}^2)\xi+\te^\perp$
is rewritten as
\begin{equation}
\begin{aligned}
\xi_{tt}-\xi_{xx}
&=-\pd_t(\Gm(\eta,\xi))-\Gm(\eta,D_t\xi)
+\pd_x(\Gm(\xi,\xi))+\Gm(\xi,D_x\xi)\\
&\qquad+(\nr{D_x\xi}^2-\nr{D_t\xi}^2)\xi
+\te^\perp.
\end{aligned}
\end{equation}
We express the right hand side using
\begin{equation}
\label{eq:F-H-byGmxietate}
\begin{aligned}
&F(x,t,\xi,\xi_x,\xi_t)
=-\pd_t(\Gm(\eta,\xi))-\Gm(\eta,D_t\xi)+\Gm(\xi,D_x\xi)\\
&\qquad\qquad\qquad\qquad+(\nr{D_x\xi}^2-\nr{D_t\xi}^2)\xi
+\te-g(\te,\xi)\xi,\\
&H(x,t,\xi)=\Gm(\xi,\xi),
\end{aligned}
\end{equation}
and apply the above lemmas.
The functions $F$ and $H$ are polynomials in $\xi$, $\xi_x$ and $\xi_t$,
and their coefficients contain only $\Gm$, $\Gm_t$, $\eta$, $\eta_t$ and $\te$.
And, $\Gm=\Gm(\gm)$ has same differentiability to $\gm$.
Therefore,
the function $F$ satisfies the condition in Lemma \ref{lemma:wave-eq-C1-existence}, when $\gm$, $\gm_t$, $\eta$, $\eta_t$, $\te\in C^0$.

\begin{remark}
If we use $\Dx$ instead of $D_x$, then the expression of $f$ contains $(\Gm(\gm_x,\xi))_x$, and coefficients are not continuous unless $\gm\in C^2$.
\end{remark}

We make sure that the solution $\xi=u\in C^1$ satisfies $\nr\xi^2\equiv1$.
Since it is not of class $C^2$, we need careful consideration.

\begin{proposition}
\label{proposition:nrxi2=1}
If the initial data satisfies $\nr{a}^2=1$ and $g(a,b)=0$,
then $\nr\xi^2=1$ identically.
\end{proposition}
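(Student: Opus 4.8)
The plan is to show that $N:=\nr\xi^2=g(\xi,\xi)$, which is of class $C^1$ because $\xi\in C^1$, satisfies a linear homogeneous wave equation with zero Cauchy data, and then to conclude $N\equiv1$ by Gronwall's inequality. The main obstacle is that $\xi$ is only $C^1$, so the second derivatives of $N$ exist a priori only in the weak sense; the crux of the matter is that, once contracted with $\xi$, they turn out to be continuous functions, thanks to the antisymmetry $\Gm_k{}^j{}_i=-\Gm_k{}^i{}_j$ of the connection symbols (which holds pointwise together with all its derivatives).

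By the very definition of $I(a,b,f,h)$ in \eqref{eq:wave-integral-equation}, the solution $\xi=I(a,b,F(\xi),H(\xi))$ is a weak solution of $\xi_{tt}-\xi_{xx}=F(\xi)+\pd_x(H(\xi))$ with $\xi(\cdot,0)=a$, $\xi_t(\cdot,0)=b$, where by \eqref{eq:F-H-byGmxietate} the right-hand side reassembles into
\[
(\nr{D_x\xi}^2-\nr{D_t\xi}^2)\xi+\te^\perp-\pd_t(\Gm(\eta,\xi))-\Gm(\eta,D_t\xi)+\Gm(\xi,D_x\xi)+\pd_x(\Gm(\xi,\xi)).
\]
The first derivatives $N_t=2g(D_t\xi,\xi)$ and $N_x=2g(D_x\xi,\xi)$ are genuine continuous functions, using $g(\Gm(\eta,\xi),\xi)=g(\Gm(\xi,\xi),\xi)=0$. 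Differentiating once more (weakly) and substituting the equation, $N_{tt}-N_{xx}=2g(\xi_{tt}-\xi_{xx},\xi)+2\nr{\xi_t}^2-2\nr{\xi_x}^2$; I would then replace $\xi_{tt}-\xi_{xx}$ by the expression displayed above and pair each term with $\xi$. The antisymmetry does all the work: the a priori non-continuous distribution $\pd_x(\Gm(\xi,\xi))$ pairs against $\xi$ to give $-g(\Gm(\xi,\xi),\xi_x)$ because $g(\Gm(\xi,\xi),\xi)\equiv0$, so it is in fact continuous; likewise $g((\pd_x\Gm)(\xi,\xi),\xi)=0$, $g(\Gm(\xi_x,\xi),\xi)=0$, $g((\pd_t\Gm)(\eta,\xi),\xi)=0$, and so on. Writing $\phi:=N-1$ and using $g(\te^\perp,\xi)=-g(\te,\xi)\phi$, all terms not proportional to $\phi$ cancel identically — this is the same algebra that converts $\rm(W_\xi)$ into \eqref{eq:xi-wave-rep} in the smooth case — leaving
\[
\phi_{tt}-\phi_{xx}=c\,\phi,\qquad c:=2\bigl(\nr{D_x\xi}^2-\nr{D_t\xi}^2-g(\te,\xi)\bigr)\in C^0,
\]
with $\phi(\cdot,0)=\nr a^2-1=0$ and $\phi_t(\cdot,0)=2g(b,a)=0$ by the hypotheses on $a$ and $b$.

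It then remains to run the standard argument. Since $\phi\in C^1$ is a weak solution of a wave equation with continuous right-hand side $c\phi$ and vanishing Cauchy data, uniqueness for the d'Alembert problem gives $\phi=I(0,0,c\phi,0)$, that is $\phi(x,t)=\tfrac12\int_0^t\int_{x-(t-\tau)}^{x+(t-\tau)}c(y,\tau)\phi(y,\tau)\,dy\,d\tau$. Bounding the inner integral by its length $2(t-\tau)$ times $M_0(c,T)M_0(\phi,\tau)$ yields $M_0(\phi,t)\le TM_0(c,T)\int_0^tM_0(\phi,\tau)\,d\tau$, and since $M_0(\phi,0)=0$, Gronwall's inequality forces $\phi\equiv0$, i.e. $\nr\xi^2\equiv1$. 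The one delicate point, as noted, is verifying that the weak second derivative $N_{tt}-N_{xx}$ is really a continuous function: this fails for $\xi_{tt}-\xi_{xx}$ itself (which contains $\pd_x\Gm$, hence $\gm_x$, not assumed continuous) but succeeds after contracting with $\xi$, precisely because every offending $\Gm$-term is annihilated by the antisymmetry of the connection symbols.
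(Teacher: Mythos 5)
Your proof is correct, and at its core it is the same proof as the paper's: both reduce the statement to the identity $\phi_{tt}-\phi_{xx}=2\phi\,(\nr{D_x\xi}^2-\nr{D_t\xi}^2-g(\te,\xi))$ for $\phi=\nr\xi^2-1$, with the same cancellations coming from the antisymmetry of $\Gm$ and the orthogonality of $\Gm(\xi,\xi)$ and $\Gm(\eta,\xi)$ to $\xi$, and both finish with the same integration-plus-Gronwall argument. Where you differ is only in the device used to make sense of second derivatives of $\nr\xi^2$ at $C^1$ regularity. The paper never leaves the classical setting: it observes from \eqref{eq:upm+h} that the combination $(D_x\xi\pm D_t\xi\mp\Gm(\eta,\xi))(x\mp t,t)$ is an explicit $t$-integral of continuous functions, hence literally differentiable along each characteristic, and carries out the algebra pointwise there. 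You instead read $\xi=I(a,b,F(\xi),H(\xi))$ as a distributional solution of $\xi_{tt}-\xi_{xx}=F+\pd_x H$, pair with the $C^1$ function $\xi$, and use $g(\pd_x(\Gm(\xi,\xi)),\xi)=-g(\Gm(\xi,\xi),\xi_x)$ to see that the contracted d'Alembertian is a continuous function; this is clean and somewhat shorter. The price is the step you cite without proof, namely that a $C^1$ distributional solution of $\phi_{tt}-\phi_{xx}=c\phi$ with vanishing Cauchy data coincides with $I(0,0,c\phi,0)$. That statement is true (mollify in $x$ only and invoke classical uniqueness, or pass to characteristic coordinates), but it is exactly the content that the paper's characteristic-integral computation establishes by hand, and in an argument this scrupulous about regularity it should be proved or precisely referenced rather than asserted.
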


\begin{proof}
Under notation of Lemma \ref{lemma:I(a,b,f,h)-C1-formula},
\begin{equation}
\label{eq:vpm}
\begin{aligned}
u^\pm(x\mp t,t)&=a'(x)\pm b(x)
\pm\int_0^tf(x\mp\tau,\tau)\,d\tau\\
&\qquad+\{h(x,0)-h(x\mp t,t)\}
+\int_0^th_t(x\mp\tau,\tau)\,d\tau.
\end{aligned}
\end{equation}
Hence,
\begin{equation}
\label{eq:upm+h}
(u^\pm+h)(x\mp t,t)
=a'(x)\pm b(x)+h(x,0)+\int_0^t(\pm f+h_t)(x\mp\tau,\tau)\,d\tau.
\end{equation}

The left hand side of \eqref{eq:upm+h} becomes, using $k=\Gm(\eta,u)$,
\begin{equation}
u^\pm+h
=u_x\pm u_t+\Gm(u,u)
=D_xu\pm D_tu\mp k.
\end{equation}
And, the right hand side of \eqref{eq:upm+h} is differentiable with respect to $t$.
Hence,
\begin{equation}
\pd_t\{(D_xu\pm D_tu\mp k)(x\mp t,t)\}
=(\pm f+h_t)(x\mp t,t).
\end{equation}

Therefore, along the line $x=x_0\mp\tau$ and $t=\tau$, it holds that
\begin{equation}
\label{eq:g(dds,u)}
g(\dds(\pm D_xu+D_tu-k),u)=g(f\pm h_t,u).
\end{equation}

The left hand side of \eqref{eq:g(dds,u)} becomes, remarking that $k$ is orthogonal to $u$,
\begin{equation}
\begin{aligned}
&\dds g(\pm D_xu+D_tu,u)-g(\pm D_xu+D_tu-k,\dds u)\\
&\qquad=\frac12\dds\{\pm\pd_x(\nr u^2)+\pd_t(\nr u^2)\}
-g(\pm D_xu+D_tu-k,u_t\mp u_x).
\end{aligned}
\end{equation}
Here,
\begin{equation}
\begin{aligned}
&-g(\pm D_xu+D_tu-k,u_t\mp u_x)
=g(\mp D_xu-D_tu+k,D_tu\mp D_xu-k\pm h)\\
&\qquad=\nr{D_xu}^2-\nr{D_tu}^2
-g(D_xu,h)+g(D_tu,2k\mp h)+g(k,-k\pm h).\\
\end{aligned}
\end{equation}

We substitute
$f=-k_t-\Gm(\eta,D_tu)+\Gm(u,D_xu)+(\nr{D_xu}^2-\nr{D_tu}^2)u+\te-g(\te,u)u$
into the right hand side of \eqref{eq:g(dds,u)}.
By the anti-symmetry of $\Gm$ and the orthogonality of $h$ and $k$ to $u$,
we have
\begin{equation}
\begin{aligned}
&g(-k_t-\Gm(\eta,D_tu)+\Gm(u,D_xu)\pm h_t,u)\\
&\qquad=g(k\mp h,u_t)+g(\Gm(\eta,u),D_tu)-g(\Gm(u,u),D_xu)\\
&\qquad=g(k\mp h,D_tu-k)+g(k,D_tu)-g(h,D_xu)\\
&\qquad=g(D_tu,2k\mp h)-g(D_xu,h)
+g(k,-k\pm h).
\end{aligned}
\end{equation}

And,
\begin{equation}
\begin{aligned}
&g((\nr{D_xu}^2-\nr{D_tu}^2)u+\te-g(\te,u)u,u)\\
&\qquad=\nr u^2(\nr{D_xu}^2-\nr{D_tu}^2)+(1-\nr u^2)g(\te,u).
\end{aligned}
\end{equation}

Combining them, we get
\begin{equation}
\dds\{\pm\pd_x(\nr u^2)+\pd_t(\nr u^2)\}
=2(\nr u^2-1)(\nr{D_xu}^2-\nr{D_tu}^2-g(\te,u)).
\end{equation}
Let $\al$ be the right hand side of this equation.
By integrating this equation, we have
\begin{equation}
\begin{aligned}
&\pm\pd_x(\nr u^2)+\pd_t(\nr u^2)
=2\int_0^t\al(x\pm(t-\tau),\tau)\,d\tau,\\
&\pd_t(\nr u^2)
=\int_0^t\al(x+(t-\tau),\tau)+\al(x-(t-\tau),\tau)\,d\tau.
\end{aligned}
\end{equation}
Since $u$ is $C^1$-bounded, we get
$m_0(\al)(t)\le Ap(t)$,
with $p(t):=m_0(\nr u^2-1)$ and a positive constant $A$.
Since the initial value satisfies $\nr u^2-1=\pd_t(\nr u^2-1)=0$,
when $t\le T$,
\begin{equation}
\begin{aligned}
&\nr{\pd_t(\nr u^2-1)}\le2A\int_0^tp(\tau)\,d\tau,\\
&\nr{\nr u^2-1}\le2A\int_0^t\int_0^sp(\tau)\,d\tau ds
=2A\int_0^t\int_\tau^tp(\tau)\,dsd\tau\\
&\qquad=2A\int_0^t(t-\tau)p(\tau)\,d\tau
\le2AT\int_0^tp(\tau)\,d\tau.
\end{aligned}
\end{equation}

Therefore,
\begin{equation}
p(t)\le2AT\int_0^tp(\tau)\,d\tau.
\end{equation}

By solving this integral inequality, we see that $p(t)=0$ and $\nr u^2-1=0$ identically.
\end{proof}

\begin{proposition}
\label{proposition:wave-eq-C1-solution}
We consider the semilinear integral equation \eqref{eq:wave-integral-equation}.
We assume that $\Gm\in\Cls01$, $\eta\in\Cls01$, $\te\in C^0$ and
$\Gm(x,0)\in C^1(x)$,
and that the initial values $\xi(x,0)=a(x)\in C^1(x)$ and $\xi_t(x,0)=b(x)\in C^0(x)$ satisfy
$\nr a^2=1$ and $g(a,b)=0$.
We also assume that, for $T_0>0$ and $r_0>2m_0(a,a',b)$,
\begin{equation}
\max_{S(T_0,r_0)}\{\nr\Gm,\nr{\Gm_t},\nr\eta,\nr{\eta_t},\nr\te\}\le K_0,
\quad\max_{S(0,r_0)}\{\nr{\Gm_x}\}\le K_0.
\end{equation}
Then, there exists a positive constant $T_1\le T_0$ depending only on $m_0(a,a',b)$, $r_0$, $T_0$ and $K_0$,
such that there exists a unique solution $\xi(x,t)\in C^1$ on the time interval $[0,T_1]$,
and the solution satisfies $M_1(\xi,T_1)\le r_0$ and $\nr\xi^2=1$.
\end{proposition}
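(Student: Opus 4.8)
The plan is to read \eqref{eq:wave-integral-equation} as the semilinear integral equation $\xi=I(a,b,F(\xi),H(\xi))$ with $F$ and $H$ the functions displayed in \eqref{eq:F-H-byGmxietate}, to obtain existence and uniqueness by applying Lemma \ref{lemma:wave-eq-M1-existence-uHC1} to those concrete $F$ and $H$, and finally to deduce $\nr\xi^2=1$ from Proposition \ref{proposition:nrxi2=1}. Thus the whole matter reduces to checking, for the $F$ and $H$ of \eqref{eq:F-H-byGmxietate}, the hypotheses of Lemma \ref{lemma:wave-eq-M1-existence-uHC1} (those of Lemma \ref{lemma:wave-eq-C1-existence} together with differentiability of $H(x,0,u)$ in $x$ and the accompanying bounds), with the constant $K_0$ of that lemma replaced by a suitably enlarged one.

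First I would verify the regularity. Writing $D_t\xi=\xi_t+\Gm(\eta,\xi)$, $D_x\xi=\xi_x+\Gm(\xi,\xi)$ and $\pd_t(\Gm(\eta,\xi))=\Gm_t(\eta,\xi)+\Gm(\eta_t,\xi)+\Gm(\eta,\xi_t)$, one sees that $F$ is a polynomial of bounded degree in $(\xi,\xi_x,\xi_t)$ whose coefficients are built only from $\Gm$, $\Gm_t$, $\eta$, $\eta_t$, $\te$, and that $H(x,t,\xi)=\Gm(\xi,\xi)$ is bilinear in $\xi$ with coefficient $\Gm$; in particular no factor $\gm_x$ nor any second derivative of $\gm$ occurs, which is exactly the reason $D_x$, $D_t$ were introduced in place of $\Dx$, $\Dt$ (cf.\ the remark above). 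Hence, under the standing hypotheses $\Gm,\eta\in\Cls01$ and $\te\in C^0$, the functions $\Gm,\Gm_t,\eta,\eta_t,\te$ are all continuous, so $F$, $H$ and all the partial derivatives $F_u,F_p,F_q,H_t,H_u,H_{tu},H_{uu}$ required in Lemma \ref{lemma:wave-eq-C1-existence} are continuous. Moreover $H(x,0,u)=\Gm(x,0)(u,u)$ is differentiable in $x$ with $H_x(x,0,u)=\Gm_x(x,0)(u,u)$ because $\Gm(x,0)\in C^1(x)$, which supplies the extra hypothesis of Lemma \ref{lemma:wave-eq-M1-existence-uHC1}.

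Next I would produce the constant. On $S(T_0,r_0)$ each of $F,F_u,F_p,F_q,H,H_t,H_u,H_{tu},H_{uu}$ is a polynomial expression in $(\xi,\xi_x,\xi_t)$ with coefficients bounded by $K_0$ and arguments bounded by $r_0$, so there is a positive constant $K_1$, depending only on $K_0$ and $r_0$, dominating $\max_{S(T_0,r_0)}\{\nr F,\nr{F_u},\nr{F_p},\nr{F_q},\nr H,\nr{H_t},\nr{H_u},\nr{H_{tu}},\nr{H_{uu}}\}$, and likewise $\max_{S(0,r_0)}\{\nr{H_x}\}\le K_0 r_0^2\le K_1$. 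Since also $2m_0(a,a',b)<r_0$ by hypothesis, Lemma \ref{lemma:wave-eq-M1-existence-uHC1} applied with $K_1$ in place of $K_0$ yields a time $T_1\le T_0$ depending only on $m_0(a,a',b)$, $T_0$, $r_0$, $K_1$ — hence only on $m_0(a,a',b)$, $T_0$, $r_0$, $K_0$ — and a $C^1$ solution $\xi$ on $[0,T_1]$ with $M_1(\xi,T_1)\le r_0$; uniqueness among all $C^1$ solutions on $[0,T_1]$ follows from Lemma \ref{lemma:wave-eq-C1-difference} with $F_1=F_2$, $H_1=H_2$, $\ep=0$, since a $C^1$ function on a compact interval has finite $M_1$. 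Finally, because $F$, $H$ are exactly the functions of \eqref{eq:F-H-byGmxietate} and the initial data satisfies $\nr a^2=1$, $g(a,b)=0$, Proposition \ref{proposition:nrxi2=1} applies to this $\xi$ and gives $\nr\xi^2=1$ identically.

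I do not anticipate a genuine difficulty, since the analytic substance has been isolated in Lemma \ref{lemma:wave-eq-M1-existence-uHC1} and Proposition \ref{proposition:nrxi2=1}. The one step needing care is the bookkeeping of the previous two paragraphs: confirming that after the substitution $\Dx,\Dt\mapsto D_x,D_t$ the coefficients of $F$ involve no more than $\Gm,\Gm_t,\eta,\eta_t,\te$ (so the assumed $C^0$-regularity suffices) and that $H(x,0,\cdot)$ is $C^1$ in $x$, and then tracking that the enlarged constant $K_1$ still depends only on $K_0$ and $r_0$, so that the existence time $T_1$ has precisely the asserted dependence.
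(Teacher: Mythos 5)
Your proposal is correct and follows essentially the same route as the paper: verify that $F$ and $H$ from \eqref{eq:F-H-byGmxietate} are polynomials in $\xi,\xi_x,\xi_t$ with coefficients involving only $\Gm,\Gm_t,\eta,\eta_t,\te$ (and that $H_x(x,0,u)=\Gm_x(x,0)(u,u)$), bound everything by a constant $K_1$ depending only on $K_0$ and $r_0$, apply Lemma \ref{lemma:wave-eq-M1-existence-uHC1}, and conclude $\nr\xi^2=1$ from Proposition \ref{proposition:nrxi2=1}. The extra remarks on uniqueness via Lemma \ref{lemma:wave-eq-C1-difference} are harmless but unnecessary, since Lemma \ref{lemma:wave-eq-M1-existence-uHC1} already asserts uniqueness.
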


\begin{proof}
The function $F$ is a polynomial in $\xi$, $\xi_x$, $\xi_t$, $\Gm$, $\Gm_t$, $\eta$, $\eta_t$ and $\te$,
and the function $H$ is a polynomial in $\xi$ and $\Gm$.
And, $H_x(x,0,u)=\Gm_x(x,0)(u,u)$.
Hence, by the assumption, there is a positive constant $K_1$ depending only on $r_0$ and $K_0$, such that
\begin{equation}
\begin{aligned}
&\max_{S(T_0,r_0)}\{\nr F,\nr{F_u},\nr{F_p},\nr{F_q},\nr H,\nr{H_t},\nr{H_u},\nr{H_{tu}},\nr{H_{uu}}\}\le K_1,\\
&\max_{S(0,r_0)}\{\nr{H_x}\}\le K_1.
\end{aligned}
\end{equation}
I.e., the condition of Lemma \ref{lemma:wave-eq-M1-existence-uHC1} is satisfied.
Therefore, there exists a positive constant $T_1\le T_0$ depending only on $m_0(a,a',b)$, $T_0$, $r_0$ and $K_1$, such that there is a unique solution $\xi\in C^1$ on the time interval $[0,T_1]$.
Moreover, the solution $\xi$ satisfies $\nr\xi^2=1$ by Lemma \ref{proposition:nrxi2=1}.
\end{proof}

\begin{lemma}
\label{lemma:wave-eq-C1-solution-difference}
We assume that, for $i=1,2$, functions $\Gm_i$, $\eta_i$, $\te_i\in C^0$ and common initial values $a\in C^1(x)$ and $b\in C^0(x)$ satisfy
the assumption of Proposition \ref{proposition:wave-eq-C1-solution}.
Namely, it holds that $(\Gm_i),(\eta_i)\in\Cls01$,
and $T_0>0$ and $r_0>2m_0(a,a',b)$ are given, and
\begin{equation}
\begin{aligned}
&\max_{S(T_0,r_0)}\{\nr{\Gm_i},\nr{(\Gm_i)_t},\nr{\eta_i},\nr{(\eta_i)_t},\nr{\te_i}\}\le K_0,
\quad\max_{S(0,r_0)}\{\nr{(\Gm_i)_x}\}\le K_0,\\
&\nr a^2=1,
\quad g(a,b)=0.
\end{aligned}
\end{equation}
We also assume that $\Gm_1(x,0)=\Gm_2(x,0)$,
and, there is a solution $\xi_i$ on a time interval $[0,T]\subset[0,T_0]$
satisfying $M_1(\xi_i,T)\le r_0$ for each $i$,
and that $\max_{S(T,r_0)}\{\nr{\dl\Gm},\nr{\dl\Gm_t},\nr{\dl\eta},\nr{\dl\eta_t},\nr{\dl\te}\}\le\ep$, where $\dl p:=p_2-p_1$.

Then, there exists a positive constant $K_1$ depending only on $T$, $r_0$ and $K_0$
such that $M_1(\dl\xi,t)\le K_1t\ep$ for $t\in[0,T]$.
\end{lemma}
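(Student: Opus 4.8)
The plan is to translate the statement into the language of Lemma \ref{lemma:wave-eq-C1-difference}, since the two solutions $\xi_i$ solve semilinear integral wave equations $\xi_i = I(a,b,F_i(\xi_i),H_i(\xi_i))$ with the functions $F_i$, $H_i$ built from $\Gm_i$, $\eta_i$, $\te_i$ exactly as in \eqref{eq:F-H-byGmxietate}. First I would record that $F_i$ and $H_i$ are polynomials in $\xi$, $\xi_x$, $\xi_t$ whose coefficients involve only $\Gm_i$, $(\Gm_i)_t$, $\eta_i$, $(\eta_i)_t$, $\te_i$; hence the bounds $\max_{S(T_0,r_0)}\{\nr{\Gm_i},\dots\}\le K_0$ give, via the same reasoning as in Proof of Proposition \ref{proposition:wave-eq-C1-solution}, a constant $K$ depending only on $r_0$ and $K_0$ with
\begin{equation*}
\max_{S(T,r_0)}\{\nr{(F_1)_u},\nr{(F_1)_p},\nr{(F_1)_q},\nr{(H_1)_u},\nr{(H_1)_{tu}},\nr{(H_1)_{uu}}\}\le K.
\end{equation*}
Next I would estimate the differences $\dl F = F_2 - F_1$ and $\dl H = H_2 - H_1$ together with $\dl H_t$ and $\dl H_u$: because $F_i$, $H_i$ are polynomials in the arguments and affine-polynomial in their coefficients, expanding $\Gm_2 = \Gm_1 + \dl\Gm$ etc.\ and collecting terms shows every summand of $\dl F$, $\dl H$, $\dl H_t$, $\dl H_u$ carries at least one factor among $\dl\Gm$, $\dl\Gm_t$, $\dl\eta$, $\dl\eta_t$, $\dl\te$, while the remaining factors are bounded by $K_0$ and $r_0$. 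This yields $\max_{S(T,r_0)}\{\nr{\dl F},\nr{\dl H},\nr{\dl H_t},\nr{\dl H_u}\}\le K'\ep$ for a constant $K'$ depending only on $r_0$ and $K_0$.

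With these two families of bounds in hand, the hypotheses of Lemma \ref{lemma:wave-eq-C1-difference} are met, taking its ``$K$'' to be $\max\{r_0, K, K'\}$ (which controls both $M_1(\xi_i,T)$ and the coefficient bounds) and its ``$\ep$'' to be $K'\ep$; the initial values agree and $H_1(x,0,u) = \Gm_1(x,0)(u,u) = \Gm_2(x,0)(u,u) = H_2(x,0,u)$ by the assumption $\Gm_1(x,0) = \Gm_2(x,0)$, so the hypothesis $H_1(x,0,u) = H_2(x,0,u)$ holds. Lemma \ref{lemma:wave-eq-C1-difference} then produces a constant $C$, depending monotone increasingly only on $T$ and on that combined ``$K$'', with $M_1(\dl\xi,T)\le C T (K'\ep)$. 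Since $K$, $K'$ depend only on $r_0$ and $K_0$, the product $C K'$ is a constant $K_1$ depending only on $T$, $r_0$ and $K_0$, and replacing $T$ by a running $t\in[0,T]$ (all estimates above are valid on each subinterval $[0,t]$) gives $M_1(\dl\xi,t)\le K_1 t\ep$, as claimed.

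The routine but only mildly delicate point is the bookkeeping in the second step: one must be sure that the difference of the two $H$'s really has a factor of $\dl\Gm$ in $\dl H$, $\dl H_t$ and $\dl H_u$ — and that $\dl H$ vanishes at $t=0$, which is where $\Gm_1(x,0)=\Gm_2(x,0)$ enters and is exactly what Lemma \ref{lemma:wave-eq-C1-difference} needs for the $\dl\wh h$ estimate. No genuinely new difficulty arises; the main obstacle is simply organizing the polynomial-coefficient estimates so that all constants are seen to depend only on the allowed quantities, and this is the same pattern already used in Propositions \ref{proposition:O-eta-difference} and \ref{proposition:O-gm-difference}.
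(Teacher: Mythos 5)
Your proposal is correct and follows essentially the same route as the paper: both reduce the statement to Lemma \ref{lemma:wave-eq-C1-difference} by observing that $F$ and $H$ are polynomials in $\xi$, $\xi_x$, $\xi_t$ with coefficients built from $\Gm$, $\Gm_t$, $\eta$, $\eta_t$, $\te$, so that the differences $\dl F$, $\dl H$, $\dl H_t$, $\dl H_u$ are bounded by a constant times $\ep$, while $\Gm_1(x,0)=\Gm_2(x,0)$ supplies the hypothesis $H_1(x,0,u)=H_2(x,0,u)$. Your write-up is merely more explicit about the bookkeeping and about passing from $T$ to a running $t$ via the monotonicity of the constant in Lemma \ref{lemma:wave-eq-C1-difference}, which the paper leaves implicit.
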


\begin{proof}
Since the function $F$ is a polynomial in $\xi$, $\xi_x$, $\xi_t$, $\Gm$, $\Gm_t$, $\eta$, $\eta_t$ and $\te$,
and the function $H$ is a polynomial in $\xi$ and $\Gm$,
there is a positive constant $K_1$ depending only on $r_0$ and $K_0$
such that $\max_{S(T,r_0)}\{\nr{\dl F},\nr{\dl H},\nr{\dl H_t},\nr{\dl H_u}\}\le K_1\ep$ holds.
Namely, in Lemma \ref{lemma:wave-eq-C1-difference}, the assumption holds for $\ep_1=K_1\ep$ and $\ep_2=0$.
Therefore there exists a positive constant $K_2$ depending only on $T$, $r_0$ and $K_0$, such that $M_1(\dl\xi,t)\le K_2t\ep$ holds.
\end{proof}

\subsection
{Estimation of curve bentness}
\label{subsection:Bgmxi}

We study the curve bentness $B$ to analyze equation $\rm(O_\te)$.
The curve bentness is defined as follows.
\begin{equation}
B(\gm,\xi):=\inf_\phi\{\Nr{\phi-\xi}^2+\Nr{D_x\phi}^2\}^{1/2},
\quad B(\gm):=B(\gm,\gm').
\end{equation}
Here, we assume that $\nr{\xi}^2=1$.
We will see that $B(\gm)>0$ means $\gm$ is not a geodesic.
Note that $\Nr{\xi}=1$, because curves are mapping $\gm:S^1={\bf R}/{\bf Z}\to(M,g)$ in this paper.

\begin{lemma}
For all $\gm$ and $\xi$, it holds that $B(\gm,\xi)\le1$.
And, the infimum $B(\gm,\xi)$ is attained by $\phi$ satisfying $-D_x^2\phi+\phi=\xi$.
\end{lemma}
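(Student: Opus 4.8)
The plan is to treat $B(\gm,\xi)$ as the square root of a quadratic minimization problem over the Hilbert space $V:=\{\phi\in H^1(S^1;\text{sections along }\gm)\}$ and to identify the minimizer via the associated Euler--Lagrange equation. First I would establish the upper bound $B(\gm,\xi)\le1$ by simply plugging in the competitor $\phi=0$: then $\Nr{\phi-\xi}^2+\Nr{D_x\phi}^2=\Nr{\xi}^2=1$, since $\Nr{\xi}=1$ by the normalization $\gm:S^1={\bf R}/{\bf Z}\to(M,g)$ and $\nr{\xi}^2\equiv1$. Taking the infimum gives $B(\gm,\xi)\le1$ immediately.

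For the second assertion, introduce the functional $J(\phi):=\Nr{\phi-\xi}^2+\Nr{D_x\phi}^2=\Ip{\phi-\xi,\phi-\xi}+\Ip{D_x\phi,D_x\phi}$ on $V$. This is a strictly convex, coercive quadratic functional (the coercivity uses $\Nr{\phi}^2\le 2\Nr{\phi-\xi}^2+2\Nr{\xi}^2$ together with the $\Nr{D_x\phi}$ term to control the full $H^1$ norm), so it attains its infimum at a unique $\phi$. At that minimizer the first variation vanishes: for every test section $\psi$,
\begin{equation}
0=\frac12\left.\frac{d}{ds}\right|_{s=0}J(\phi+s\psi)
=\Ip{\phi-\xi,\psi}+\Ip{D_x\phi,D_x\psi}.
\end{equation}
Using the integration-by-parts formula $\Ip{D_x p,q}+\Ip{p,D_x q}=0$ established earlier in the paper, the second term equals $-\Ip{D_x^2\phi,\psi}$, so that $\Ip{-D_x^2\phi+\phi-\xi,\psi}=0$ for all $\psi$. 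Since $\psi$ is arbitrary, $-D_x^2\phi+\phi=\xi$, which is the claimed equation.

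The one point requiring a little care --- and the step I would expect to be the main obstacle --- is the regularity needed to legitimately integrate by parts and to interpret $D_x^2\phi$: the minimizer is a priori only in $H^1$, so the identity $-D_x^2\phi+\phi=\xi$ should first be read weakly, and then elliptic regularity for the operator $-D_x^2+\mathrm{id}$ (whose coefficients involve $\Gm(\xi,\cdot)$, hence inherit the regularity of $\gm$ and $\xi$) upgrades $\phi$ to $H^3$, or to $C^2$ when $\xi$ is continuous, so that $-D_x^2\phi+\phi=\xi$ holds classically. In the context of this paper, where $\xi\in C^1$ along the relevant curves, this is routine one-dimensional ODE regularity and poses no real difficulty; existence and uniqueness of $\phi$ solving $-D_x^2\phi+\phi=\xi$ on $S^1$ also follows directly from standard periodic ODE theory, giving a second, self-contained route to the minimizer.
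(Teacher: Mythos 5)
Your proposal is correct, but it runs the paper's argument in the opposite logical direction. The paper first \emph{solves} the equation $-D_x^2\phi+\phi=\xi$ (the operator $L=-D_x^2+\mathrm{id}$ is self-adjoint with $\Ker L=0$, since $0=\Ip{L\phi,\phi}=\Nr{D_x\phi}^2+\Nr{\phi}^2$ forces $\phi=0$, so $L\phi=\xi$ is solvable), and only then checks that this $\phi$ is the minimizer by an explicit completing-the-square identity: for any perturbation $\dl$, the cross terms $\Ip{\dl,\phi-\xi}+\Ip{D_x\dl,D_x\phi}=\Ip{\dl,-D_x^2\phi+\phi-\xi}$ vanish by the equation, leaving $B(\gm,\xi,\phi+\dl)^2-B(\gm,\xi,\phi)^2=\Nr{\dl}^2+\Nr{D_x\dl}^2\ge0$. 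You instead first produce a minimizer by the direct method (coercivity plus strict convexity of the quadratic functional on $H^1$) and then derive the equation as the Euler--Lagrange condition, with a regularity upgrade to make sense of $D_x^2\phi$. Both are sound. The paper's order has the small advantage that minimality is verified by a purely algebraic identity, so no weak-compactness or lower-semicontinuity machinery and no elliptic regularity step are needed to conclude that the infimum is attained; the cost is that it must invoke solvability of the self-adjoint equation. Your order gets existence of the minimizer without Fredholm theory but must then justify the integration by parts and the interpretation of $D_x^2\phi$ for an a priori $H^1$ minimizer --- a point you correctly flag and resolve. Your closing remark that periodic ODE theory gives a second self-contained route to $\phi$ is essentially the paper's own existence step.
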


\begin{proof}
Let $B(\gm,\xi,\phi):=(\Nr{\phi-\xi}^2+\Nr{D_x\phi}^2)^{1/2}$.
First, it holds that
\begin{equation}
B(\gm,\xi)\le B(\gm,\xi,0)
=\Nr\xi
=1.
\end{equation}

Next, we show existence of $\phi\in H^1$ satisfying $-D_x^2\phi+\phi=\xi$.
The operator $L(\phi):=-D_x^2\phi+\phi$ is self-adjoint.
And, if $-D_x^2\phi+\phi=0$, then
\begin{equation}
0
=\Ip{-D_x^2\phi+\phi,\phi}
=\Ip{D_x\phi,D_x\phi}+\Ip{\phi,\phi}
\end{equation}
and $\phi=0$.
Hence, $\Ker L=0$.
Therefore, the equation $L(\phi)=\xi$ has a solution.

Let $\phi$ be the solution, and choose any $\td\phi=\phi+\dl$.
Then, in the equality
\begin{equation}
\begin{aligned}
&B(\gm,\xi,\phi+\dl)^2-B(\gm,\xi,\phi)^2
=\Nr{\dl}^2+2\Ip{\dl,\phi-\xi}
	+\Nr{D_x\dl}^2+2\Ip{D_x\dl,D_x\phi},
\end{aligned}
\end{equation}
we have
\begin{equation}
\Ip{\dl,\phi-\xi}+\Ip{D_x\dl,D_x\phi}
=\Ip{\dl,\phi-\xi}-\Ip{\dl,D_x^2\phi}
=\Ip{\dl,-D_x^2\phi+\phi-\xi}
=0,
\end{equation}
hence
\begin{equation}
B(\gm,\xi,\phi+\dl)^2-B(\gm,\xi,\phi)^2
=\Nr{\dl}^2+\Nr{D_x\dl}^2
\ge0.
\end{equation}
Therefore, $\phi$ attains the infimum $B(\gm,\xi)$.
\end{proof}

\begin{lemma}
$B(\gm,\xi)=0$ if and only if $D_x\xi=0$.
\end{lemma}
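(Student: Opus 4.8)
The plan is to read off both implications directly from the preceding lemma, which guarantees that the infimum defining $B(\gm,\xi)$ is actually attained, namely by the unique solution $\phi\in H^1$ of $-D_x^2\phi+\phi=\xi$; I will also use the auxiliary notation $B(\gm,\xi,\phi):=(\Nr{\phi-\xi}^2+\Nr{D_x\phi}^2)^{1/2}$ introduced there.

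First the easy direction. If $D_x\xi=0$, then $\xi$ itself is an admissible competitor, so
\begin{equation}
B(\gm,\xi)^2\le B(\gm,\xi,\xi)^2=\Nr{\xi-\xi}^2+\Nr{D_x\xi}^2=0,
\end{equation}
and hence $B(\gm,\xi)=0$.

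For the converse, suppose $B(\gm,\xi)=0$. Take the minimizer $\phi$ furnished by the previous lemma, so that
\begin{equation}
\Nr{\phi-\xi}^2+\Nr{D_x\phi}^2=B(\gm,\xi,\phi)^2=B(\gm,\xi)^2=0.
\end{equation}
Since the left-hand side is a sum of two non-negative terms, both must vanish: $\Nr{\phi-\xi}=0$ and $\Nr{D_x\phi}=0$. The first equality gives $\phi=\xi$ as $L_2$ vector fields along $\gm$ (in particular $\xi\in H^1$), and the second gives $D_x\phi=0$; combining the two, $D_x\xi=D_x\phi=0$.

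The argument is essentially immediate once the existence and the characterization of the minimizer are in hand, so I do not expect a genuine obstacle. The one point deserving a word of care is the meaning of "$D_x\xi=0$" when $\xi$ is a priori only an $L_2$ unit vector field: the hypothesis $B(\gm,\xi)=0$ forces $\xi$ to coincide with an $H^1$ field $\phi$ with $D_x\phi=0$, which is exactly the sense in which $D_x\xi$ is to be understood here, so no extra regularity discussion is needed.
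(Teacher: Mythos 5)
Your proof is correct and follows essentially the same route as the paper: the paper's (very terse) argument likewise takes the minimizer $\phi$ from the preceding lemma and observes that $B(\gm,\xi)=0$ forces $\phi=\xi$ and $D_x\phi=0$, while the converse direction is the trivial competitor $\phi=\xi$. Your version simply spells out both implications and the regularity remark more explicitly.
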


\begin{proof}
A function $\phi$ attaining $B(\gm,\xi)=0$ satisfies $\phi=\xi$ and $D_x\phi=0$.
Therefore $D_x\xi=0$.
\end{proof}

And, the functional $B$ is continuous in the following sense.

\begin{lemma}
\label{lemma:B(gm,xi)-estimate}
The functional $B(\gm,\xi)$ is continuous with respect to $L_2(x)$-topology of $\Gm$ and $\xi$.
\end{lemma}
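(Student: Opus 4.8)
The plan is to compare $B(\gm,\xi)$ at two nearby data $(\Gm_1,\xi_1)$ and $(\Gm_2,\xi_2)$ by feeding the minimizer of one into the functional of the other, exactly as in the proof that the infimum is attained. Write $\dl\Gm:=\Gm_2-\Gm_1$, $\dl\xi:=\xi_2-\xi_1$ and $\ep:=\Nr{\dl\Gm}+\Nr{\dl\xi}$ for the $L_2(x)$-distance of the data. The basic observation is that for a \emph{fixed} competitor $\phi$ the quantity $\Nr{\phi-\xi_i}^2+\Nr{\phi_x+\Gm_i(\xi_i,\phi)}^2$ is a polynomial of degree $\le2$ in $\xi_i$ and $\Gm_i$, and the pointwise estimate $\nr{\Gm_i(\xi_i,\phi)}\le\nr{\Gm_i}\,\nr{\xi_i}\,\nr\phi=\nr{\Gm_i}\,\nr\phi$ (using $\nr{\xi_i}\equiv1$) turns a small $L_2(x)$-change of the data into a small change of the value, \emph{provided} $\phi$ is bounded in $L^\infty$. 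So the only non-formal ingredient I need is an a priori $L^\infty$-bound on the minimizer, which I establish first; everything else is expansion of a square.

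\emph{A priori bound on the minimizer.} Let $\phi_1$ be the minimizer for $(\Gm_1,\xi_1)$, i.e.\ $-D_x^2\phi_1+\phi_1=\xi_1$ with $D_x$ formed from $\Gm_1$ and $\xi_1$. Since $B(\gm_1,\xi_1)\le1$ and $\phi_1$ realizes the infimum, $\Nr{\phi_1-\xi_1}\le1$ and $\Nr{D_x\phi_1}\le1$; hence $\Nr{\phi_1}\le\Nr{\xi_1}+1=2$ and $\Nr{(\phi_1)_x}\le\Nr{D_x\phi_1}+\Nr{\Gm_1(\xi_1,\phi_1)}\le1+2\Nr{\Gm_1}_{L^\infty}$. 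Thus $\Nr{\phi_1}_{H^1}$, and therefore $\Nr{\phi_1}_{L^\infty}$ by the Sobolev embedding $H^1(S^1)\hookrightarrow C^0(S^1)$, is bounded by a constant depending only on $\Nr{\Gm_1}_{L^\infty}$. Since $\Gm=\Gm(\gm)$ and the curves in question stay in a fixed compact region of $M$, the quantity $\Nr{\Gm}_{L^\infty}$ is automatically bounded; this is the ambient bound under which the continuity statement is applied, so it costs nothing.

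\emph{Comparison and conclusion.} Using $\phi_1$ as competitor for $(\Gm_2,\xi_2)$,
\begin{equation*}
B(\gm_2,\xi_2)^2\le\Nr{\phi_1-\xi_2}^2+\Nr{(\phi_1)_x+\Gm_2(\xi_2,\phi_1)}^2 .
\end{equation*}
Substituting $\phi_1-\xi_2=(\phi_1-\xi_1)-\dl\xi$ and $\Gm_2(\xi_2,\phi_1)=\Gm_1(\xi_1,\phi_1)+\Gm_2(\dl\xi,\phi_1)+(\dl\Gm)(\xi_1,\phi_1)$ and expanding the two squares, the terms free of $\dl\xi$ and $\dl\Gm$ add up to $\Nr{\phi_1-\xi_1}^2+\Nr{(\phi_1)_x+\Gm_1(\xi_1,\phi_1)}^2=B(\gm_1,\xi_1)^2$, while each remaining term carries a factor $\Nr{\dl\xi}$ or $\Nr{\dl\Gm}$; using $\Nr{\phi_1-\xi_1}\le1$, $\Nr{D_x\phi_1}\le1$, the pointwise bound for $\Gm(\cdot,\cdot)$, and the $L^\infty$-bound on $\phi_1$, they are estimated by $C_1\ep+C_2\ep^2$ with $C_1,C_2$ depending only on $\Nr{\Gm_1}_{L^\infty}$ and $\Nr{\Gm_2}_{L^\infty}$. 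Hence $B(\gm_2,\xi_2)^2\le B(\gm_1,\xi_1)^2+C_1\ep+C_2\ep^2$, and interchanging the indices $1$ and $2$ gives the reverse inequality with the same constants, so $\nr{B(\gm_2,\xi_2)^2-B(\gm_1,\xi_1)^2}\le C_1\ep+C_2\ep^2\to0$ as $\ep\to0$; since $B\ge0$ this yields $B(\gm_2,\xi_2)\to B(\gm_1,\xi_1)$, i.e.\ the asserted $L_2(x)$-continuity.

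The main obstacle is precisely the $L^\infty$-control of the minimizer — and, behind it, the fact that $B$ involves the product $\Gm(\xi,\phi)$ while one is only handed $L_2(x)$-convergence of $\Gm$ and $\xi$. Once the uniform bound $\Nr{\Gm}_{L^\infty}\le K$ is in hand (automatic for curves in a fixed compact set), the minimizer lies in a fixed ball of $H^1\subset C^0$, the product is controlled, and the rest is the routine algebra above.
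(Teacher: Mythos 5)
Your proof is correct and follows essentially the same route as the paper's: insert the minimizer for one datum as a competitor for the other, control that minimizer in sup norm, and expand the square. The only substantive differences are that you vary $\Gm$ and $\xi$ simultaneously where the paper does two separate one-variable steps, and that you obtain the sup bound on the minimizer from the flat $H^1$ norm plus an ambient $L^\infty$ bound on $\Gm$, whereas the paper gets the universal bound $m_0(\phi)\le\Nr{\phi}+\Nr{D_x\phi}\le3$ directly from the covariant $H^1$ control.
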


\begin{proof}
To show continuity in $\xi$, consider $\gm$, $\xi_1$ and $\xi_2$,
and let $\dl\xi=\xi_2-\xi_1$.
Take $\phi$ attaining $B(\gm,\xi_1)$.
Then, we have $\Nr{\phi-\xi_1}\le B(\gm,\xi_1)$.
Therefore,
\begin{equation}
\begin{aligned}
&B(\gm,\xi_2)^2-B(\gm,\xi_1)^2
\le B(\gm,\xi_2,\phi)^2-B(\gm,\xi_1,\phi)^2
=\Nr{\phi-\xi_2}^2-\Nr{\phi-\xi_1}^2\\
&\qquad=\Nr{\dl\xi}^2+2\Ip{\phi-\xi_1,\dl\xi}
\le\Nr{\dl\xi}^2+2\Nr{\phi-\xi_1}\Nr{\dl\xi}
\le\Nr{\dl\xi}^2+2\Nr{\dl\xi}B(\gm,\xi_1).
\end{aligned}
\end{equation}
I.e., $B(\gm,\xi_2)^2\le(B(\gm,\xi_1)+\Nr{\dl\xi})^2$.
Since the inequality exchanging $\xi_1$ and $\xi_2$ similarly holds, we get
\begin{equation}
\nr{B(\gm,\xi_1)-B(\gm,\xi_2)}\le\Nr{\xi_1-\xi_2}.
\end{equation}

Next, to show continuity in $\Gm$, consider $\Gm_1$, $\Gm_2$ and $\xi$.
We use corresponding notations $\gm_i$ and $D_i$.
And, let $\dl\Gm=\Gm_2-\Gm_1$.
If $\phi$ attains $B(\gm_1,\xi)$, then $\Nr{\phi-\xi}$, $\Nr{(D_1)_x\phi}\le B(\gm_1,\xi)\le1$, and
\begin{equation}
\begin{aligned}
&\Nr{\phi}\le\Nr{\phi-\xi}+\Nr\xi
\le1+1
=2,\\
&m_0(\phi)
\le\Nr{\phi}+\Nr{(D_1)_x\phi}
\le2+1
=3.
\end{aligned}
\end{equation}
Hence,
\begin{equation}
\begin{aligned}
&B(\gm_2,\xi)^2\le B(\gm_2,\xi,\phi)^2
=\Nr{\phi-\xi}^2+\Nr{(D_1)_x\phi+\dl\Gm(\xi,\phi)}^2\\
&\qquad=B(\gm_1,\xi,\phi)^2+\Nr{\dl\Gm(\xi,\phi)}^2+2\Ip{(D_1)_x\phi,\dl\Gm(\xi,\phi)}\\
&\qquad\le B(\gm_1,\xi)^2+\Nr{\dl\Gm(\xi,\phi)}^2+2B(\gm_1,\xi)\Nr{\dl\Gm(\xi,\phi)}
=(B(\gm_1,\xi)+\Nr{\dl\Gm(\xi,\phi)})^2.
\end{aligned}
\end{equation}

Therefore,
\begin{equation}
\begin{aligned}
&B(\gm_2,\xi)-B(\gm_1,\xi)\le\Nr{\dl\Gm(\xi,\phi)}
\le\Nr{\dl\Gm}m_0(\phi)
\le3\Nr{\dl\Gm}.
\end{aligned}
\end{equation}
Similarly, we have $B(\gm_1,\xi)-B(\gm_2,\xi)\le3\Nr{\dl\Gm}$, and
\begin{equation}
\nr{B(\gm_1,\xi)-B(\gm_2,\xi)}
\le3\Nr{\Gm_1-\Gm_2}.
\end{equation}
\end{proof}

\subsection
{Estimation for Equation $\rm(O_{\protect\te})$}
\label{subsection:O-te}

In this section, we estimate solutions to Equation $\rm(O_\te)$
$-D_x^2\te+\te^\perp=D_x\Psi+\Phi$.
More precisely, for known functions $\xi$, $\Psi$, $\Phi\in C^0$, we estimate the unknown function $\te\in\Cls10$.
This equation is a linear ODE, but we need geometric consideration because it reflects the condition that the curve is away from geodesics.
In fact, if $\gm(x)$ is a geodesic and $\xi=\gm_x$, then
$-D_x^2\xi+\xi^\perp=0$
and so the uniqueness of $\te$ doesn't hold
and $-D_x^2\te+\te^\perp=\xi$ doesn't have solutions.
First we analyze this equation with arbitrarily fixed $t$.

Although we assume $\Gm$ is only class $C^0(x)$, the equation $-D_x^2u+u^\bot=h$ is converted to the equation $-\td u_{xx}+\td u^\bot=\td h$ (with non-trivial holonomy) using a frame field parallel with respect to $D_x$ along $\gm$.
Therefore, we need not care about differentiability of $\Gm$.

\begin{lemma}
\label{lemma:O-te-base}
Consider equation $-D_x(D_xu+f)+u^\perp=h$ for $u$ with known functions $f$, $h\in L_2(x)$.
If $B(\gm,\xi)\ne0$, then there is a unique solution $u\in H^1$.
Moreover, it holds that
\begin{equation}
m_0(u),m_0(D_xu)\le C(\Nr{f}+\Nr{h}),
\end{equation}
where $C$ is a positive constant monotone increasingly depending only on $\Nr{D_x\xi}$ and $B(\gm,\xi)^{-1}$.
\end{lemma}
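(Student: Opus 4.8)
The plan is first to make the problem genuinely one‑dimensional. Fixing $t$ and choosing an orthonormal frame along $x\mapsto\gm(x,t)$ that is parallel for $D_x$, the operator $D_x$ becomes $\pd_x$; the frame is in general not periodic, but the holonomy is a fixed orthogonal matrix, so every quantity in the statement — $\nr u$, $\nr{D_x u}$, $g(u,\xi)$, $u^\perp$, the norms $\Nr\cdot$ and $m_0$, and $B(\gm,\xi)$ — is unaffected, exactly as in the Remark preceding the lemma, and differentiability of $\Gm$ never enters. I then pass to the weak form: with $a(u,v):=\Ip{D_xu,D_xv}+\Ip{u^\perp,v}$, which is symmetric and non‑negative since $\Ip{u^\perp,v}=\Ip{u^\perp,v^\perp}=\Ip{u,v^\perp}$, the equation $-D_x(D_xu+f)+u^\perp=h$ is equivalent, via the integration‑by‑parts formula $\Ip{D_xp,q}+\Ip{p,D_xq}=0$, to
$$a(u,v)=\Ip{h,v}-\Ip{f,D_xv}\qquad(\forall v\in H^1).$$

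For existence and uniqueness I would use the Fredholm alternative. Write $a=a_0-k$, where $a_0(u,v)=\Ip{D_xu,D_xv}+\Ip{u,v}$ is the ($D_x$‑)$H^1$ inner product, hence bounded and coercive, and $k(u,v)=\Ip{g(u,\xi)\xi,v}$; the map $u\mapsto g(u,\xi)\xi$ sends $H^1$ into $L_2$ and is compact by Rellich, so the operator $L\colon H^1\to(H^1)^*$ defined by $\Ip{Lu,v}=a(u,v)$ is a compact perturbation of an isomorphism, i.e. Fredholm of index $0$; thus $L$ is invertible as soon as it is injective. If $a(u,\cdot)=0$ then $a(u,u)=\Nr{D_xu}^2+\Nr{u^\perp}^2=0$, so $D_xu=0$ and $u=g(u,\xi)\xi=:\lm\xi$; from $0=g(D_xu,\xi)=\lm_x$ the function $\lm$ is constant, and then $0=D_x(\lm\xi)=\lm D_x\xi$ forces $\lm=0$, since $B(\gm,\xi)\ne0$ gives $D_x\xi\not\equiv0$ by the earlier lemma. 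Hence $u=0$, and there is a unique solution $u\in H^1$. (Alternatively one can build $u$ directly by variation of parameters in the parallel frame.)

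The heart of the lemma is the a priori estimate, and the geometry enters here through $B(\gm,\xi)$. Testing against $u$ gives the energy identity
$$Q(u):=\Nr{D_xu}^2+\Nr{u^\perp}^2=\Ip{h,u}-\Ip{f,D_xu}\le\Nr h\Nr u+\Nr f\Nr{D_xu},$$
so everything except the tangential part $\mu:=g(u,\xi)$ of $u$ is already controlled by $\sqrt{Q(u)}$. To estimate $\Nr\mu$, let $\phi$ be the minimizer of $B$, so $-D_x^2\phi+\phi=\xi$; then $\Nr\phi\le1$, $\Nr{D_x\phi}\le\tfrac12$, and $a(u,\phi)=\Ip{u,\beta\xi}$ with $\beta:=1-g(\phi,\xi)$, $\int\beta\,dx=B(\gm,\xi)^2$, $\Nr\beta\le2$. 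Hence $\nr{\Ip{u,\beta\xi}}=\nr{\Ip{h,\phi}-\Ip{f,D_x\phi}}\le\Nr h+\tfrac12\Nr f$. Splitting $\mu$ into mean $\bar\mu$ and oscillation $\mu_0$, Wirtinger's inequality gives $\Nr{\mu_0}\le c_P\Nr{\mu_x}$, so $B(\gm,\xi)^2\nr{\bar\mu}\le\Nr h+\Nr f+2c_P\Nr{\mu_x}$; and $\mu_x=g(D_xu,\xi)+g(u^\perp,D_x\xi)$, whence $\Nr{\mu_x}\le\Nr{D_xu}+\Nr{D_x\xi}\,m_0(u^\perp)$ with $m_0(u^\perp)\le\Nr{u^\perp}+\Nr{D_xu^\perp}$ and $\Nr{D_xu^\perp}$ in turn estimated from the equation and the quantities already bounded. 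Collecting these, and using $B(\gm,\xi)\le\Nr{D_x\xi}$ (take $\phi=\xi$ in the definition of $B$) to absorb the curvature factors into $B(\gm,\xi)^{-1}$, one gets $\Nr\mu$, hence $\Nr u$ and $\Nr{D_xu}$, bounded by $C(\Nr f+\Nr h)$ with $C$ monotone in $\Nr{D_x\xi}$ and $B(\gm,\xi)^{-1}$. I expect the main obstacle to be exactly the book‑keeping of this last step: closing the loop between $\Nr\mu$, $\Nr{\mu_x}$ and $m_0(u^\perp)$ without losing a power of $\Nr{D_x\xi}$ that cannot be reabsorbed — equivalently, proving the "bentness coercivity" $Q(u)\ge c(B(\gm,\xi),\Nr{D_x\xi})\,(\Nr u^2+\Nr{D_xu}^2)$, whose geometric core is the identity $a(u,\phi)=\Ip{u,\beta\xi}$ with $\int\beta\,dx=B(\gm,\xi)^2$.

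Finally the $m_0$‑bounds follow. The frame‑independent one‑dimensional Sobolev inequality $m_0(v)\le\Nr v+\Nr{D_xv}$, obtained by integrating $\pd_x\nr v^2=2g(D_xv,v)$ from a point where $\nr v$ is minimal, applied to $v=u$ gives $m_0(u)\le\Nr u+\Nr{D_xu}\le C(\Nr f+\Nr h)$. For $D_xu$, the equation reads $D_x(D_xu+f)=u^\perp-h\in L_2$, so $D_xu+f\in H^1$ and $m_0(D_xu+f)\le\Nr{D_xu+f}+\Nr{u^\perp-h}\le C(\Nr f+\Nr h)$, which yields the bound on $m_0(D_xu)$ — with the understanding that, for $f$ only in $L_2$, it is $D_xu+f$, the quantity actually differentiated in the equation, that is continuous.
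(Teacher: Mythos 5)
Your setup is sound and partly genuinely different from the paper: the existence/uniqueness part (weak formulation, Fredholm alternative, kernel argument via $D_xu=0$, $u^\perp=0$, $u=\lm\xi$, $\lm D_x\xi=0$) matches the paper's, and your identity $a(u,\phi)=\Ip{g(u,\xi),\beta}$ with $\beta=1-g(\phi,\xi)$ and $\int\beta\,dx=B(\gm,\xi)^2$ for the minimizer $\phi$ is correct and is a cleaner way to bring the bentness into play than the paper's device (the paper instead inserts the competitor $b^{-1}u^\perp+\xi$, $b=\pm\Nr u$, into the definition of $B$ and reads off $B^2\le c\,\al^{1/2}\Nr u^{-1/2}$). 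The final $m_0$ step via $m_0(v)\le\Nr v+\Nr{D_xv}$ and the remark that $D_xu+f$ is the differentiated quantity for $f\in L_2$ also agree with the paper.

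However, the step you yourself flag as ``the main obstacle'' is a genuine gap, and as written your chain of inequalities does not close. Tracing it: $B^2\nr{\bar\mu}\le\Nr h+\Nr f+2c_P\Nr{\mu_x}$, $\Nr{\mu_x}\le\Nr{D_xu}+\Nr{D_x\xi}\,m_0(u^\perp)$, and your additive bound $m_0(u^\perp)\le\Nr{u^\perp}+\Nr{D_xu^\perp}$ feeds back a term $\Nr{D_xu^\perp}\ge\Nr{g(u,\xi)D_x\xi}-\dots$, which is bounded only by $m_0(u)\Nr{D_x\xi}\sim\Nr{D_x\xi}(\Nr u+\Nr{D_xu})$, i.e.\ \emph{linearly} in $\Nr u\approx\nr{\bar\mu}$. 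The resulting coefficient of $\nr{\bar\mu}$ on the right is of order $c_P\Nr{D_x\xi}^2$, and since $B(\gm,\xi)\le\Nr{D_x\xi}$ (your own observation) this can never be absorbed by the $B^2$ on the left. The paper breaks this circularity not by absorption but by a nonlinear interpolation: assuming $\Nr u\ge\al:=\Nr f+\Nr h$, the energy identity gives $\Nr{u^\perp}^2,\ \Nr{D_xu+\tfrac12f}^2\le\al\Nr u$, and then the \emph{multiplicative} Sobolev inequality $m_0(v)^2\le\Nr v\,(\Nr v+2\Nr{D_xv})$ applied to $v=u^\perp$ yields $m_0(u^\perp)\le c\,\al^{1/4}\Nr u^{3/4}$ --- sublinear in $\Nr u$ because the small factor $\Nr{u^\perp}\le\al^{1/2}\Nr u^{1/2}$ multiplies the linearly growing $\Nr{D_xu^\perp}$. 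With that, every error term in your $\bar\mu$ estimate is $O(\al^{1/4}\Nr u^{3/4})$, the inequality becomes $B^2\Nr u\le c\,\al^{1/4}\Nr u^{3/4}$ (up to harmless terms), and solving gives $\Nr u\le cB^{-8}\al$ in your normalization (the paper gets $B^{-4}\al$), which suffices since only monotone dependence on $B^{-1}$ is claimed. Without replacing your additive $m_0$ bound by this interpolation (or an equivalent device), the estimate cannot be completed.
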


\begin{proof}
First we estimate under assumption $f\in H^1$,
and remove the assumption at last step.
Since $\Ip{-D_x^2u+u^\perp,p}=\Ip{u,-D_x^2p+p^\perp}$
and $\Ip{D_x^2u+u^\perp,u}=\Nr{D_xu}^2+\Nr{u^\perp}^2\ge0$,
the operator $L(u)=D_x^2u+u^\perp$ is self-adjoint and semi-positive.
Elements $u$ of $\Ker L$ satisfy $D_xu=0$ and $u^\perp=0$.
Hence, we get $u=v\xi$ and $0=D_x(v\xi)=v_x\xi+vD_x\xi$.
Since $D_x\xi$ is orthogonal to $\xi$, we get $v_x=0$ and $D_x\xi=0$.
Hence, if $u\ne0$, then $B(\gm,\xi)\le B(\gm,\xi,\xi)=0$.
I.e., $\Ker L=0$ under assumption $B(\gm,\xi)>0$.
Therefore, equation $L(u)=D_xf+h\in L_2(x)$ has a unique solution $u\in H^1(x)$.

To simplify notations, we denote by small $c_*$ positive constants monotone increasingly depending only on $\Nr{D_x\xi}$ but independent of $B(\gm,\xi)$.
To show boundedness of $u$, let $\al:=\Nr f+\Nr h$ and contrarily assume that $\Nr{u}\ge\al$ for a moment.
Then, $\al^\sigma\Nr u^\tau\le\al^{\sigma-\rho}\Nr u^{\tau+\rho}$
for $\rho\ge0$.

From
\begin{equation}
\begin{aligned}
\label{eq:Dxte+1/2p}
&\Nr{D_xu+\frac12f}^2+\Nr{u^\perp}^2
=\Ip{-D_x^2u-D_xf+u^\perp,u}+\frac14\Nr{f}^2\\
&\qquad=\Ip{h,u}+\frac14\Nr{f}^2
\le\Nr h\Nr u+\frac14\Nr{f}^2
\le\al\Nr u,
\end{aligned}
\end{equation}
we have $\Nr{D_xu+(1/2)f}^2$, $\Nr{u^\perp}^2\le\al\Nr u$.

Let $v:=g(u,\xi)$.
Then $u=u^\perp+v\xi$,
$\Nr u^2=\Nr{u^\perp}^2+\Nr v^2$,
and,
\begin{equation}
\begin{aligned}
&\Nr{D_xu+\frac12f}^2,\Nr{u^\perp}^2
\le\al\Nr u,\\
&\Nr{D_xu}
\le\al^{1/2}\Nr u^{1/2}+\Nr f
\le2\al^{1/2}\Nr u^{1/2}
\le3\Nr u,\\
&m_0(u^\perp)\le m_0(u)
\le\Nr u+\Nr{D_xu}
\le4\Nr u.
\end{aligned}
\end{equation}
From
\begin{equation}
g(D_xu,\xi)=g(D_xu^\perp+D_x(v\xi),\xi)
=-g(u^\perp,D_x\xi)+v_x,
\end{equation}
we have
\begin{equation}
\begin{aligned}
&\Nr{v_x}\le\Nr{D_xu}+m_0(u^\perp)\Nr{D_x\xi}
\le3\Nr u+\C[c]\Nr u
\le\C[c]\Nr u,\\
&m_0(v)\le\Nr v+\Nr{v_x}\le\C[c]\Nr u,\\
&\Nr{D_x(u^\perp)}\le\Nr{D_xu}+\Nr{D_x(v\xi)}
\le3\Nr u+\Nr{v_x}+\C[c]m_0(v)
\le\C[c]\Nr u.
\end{aligned}
\end{equation}
Therefore,
\begin{equation}
\begin{aligned}
m_0(u^\perp)^2&\le\Nr{u^\perp}(\Nr{u^\perp}+\Nr{D_xu^\perp})
\le\C[c]\al^{1/2}\Nr u^{1/2}(\al^{1/2}\Nr u^{1/2}+\Nr u)\\
&\le\C[c]\al^{1/2}\Nr u^{3/2}.
\end{aligned}
\end{equation}

Using this new estimation of $m_0(u^\perp)$, we get
\begin{equation}
\Nr{v_x}\le\Nr{D_xu}+\C[c]m_0(u^\perp)
\le2\al^{1/2}\Nr u^{1/2}+\C[c]\al^{1/4}\Nr u^{3/4}
\le\Cl{c:vx}\al^{1/4}\Nr u^{3/4}.
\end{equation}

Now we choose $x_0$ such that $\Nr{v}=\nr{v(x_0)}\le\Nr u$,
and let $b:=\Nr u$ or $b:=-\Nr u$ according to $v(x_0)\ge0$ or $v(x_0)<0$.
Then, $\nr{b-v(x_0)}=\Nr u-\Nr v$, and
\begin{equation}
\begin{aligned}
&m_0(b-v)\le\nr{b-v(x_0)}+m_0(v-v(x_0))
\le\Nr u-\Nr v+\Nr{v_x}\\
&\qquad=(\Nr u^2-\Nr v^2)/(\Nr u+\Nr v)+\Nr{v_x}
\le\Nr{u^\perp}^2/\Nr u+\Nr{v_x}\\
&\qquad\le\al\Nr u/\Nr u+\Cr{c:vx}\al^{1/4}\Nr u^{3/4}
\le\C[c]\al^{1/4}\Nr u^{3/4}.
\end{aligned}
\end{equation}

Using this, we get
\begin{equation}
\begin{aligned}
&\Nr{D_x(u^\perp+b\xi)}\le\Nr{D_x(u^\perp+v\xi)}+\Nr{D_x((b-v)\xi)}\\
&\qquad\le\Nr{D_xu}+\Nr{v_x}+m_0(b-v)\Nr{D_x\xi}\\
&\qquad\le2\al^{1/2}\Nr u^{1/2}+\C[c]\al^{1/4}\Nr u^{3/4}
+\C[c]\al^{1/4}\Nr u^{3/4}
\le\C[c]\al^{1/4}\Nr u^{3/4},
\end{aligned}
\end{equation}
and
\begin{equation}
\begin{aligned}
&B(\gm,\xi)^2\le B(\gm,\xi,b^{-1}u^\perp+\xi)^2
=b^{-2}\Nr{u^\perp}^2+b^{-2}\Nr{D_x(b\xi+u^\perp)}^2\\
&\qquad\le\C[c]\Nr u^{-2}\{\al\Nr u+\al^{1/2}\Nr u^{3/2}\}
\le\C[c]\al^{1/2}\Nr u^{-1/2}.
\end{aligned}
\end{equation}
Therefore $\Nr u\le\Cl{c:u<al}B^{-4}\al$.

It means that either $\Nr u\le\al$ or $\Nr u\le\Cr{c:u<al}B^{-4}\al$ holds.
Thus, finally we get
\begin{equation}
\Nr u\le\Cl{c:u<B-4al}\max\{\al,B^{-4}\al\}
=\Cr{c:u<B-4al}B^{-4}(\Nr f+\Nr h),
\end{equation}
which is the desired estimation of $\Nr u$.

Next, we estimate the differential of $u$.
We denote by $K_*$ positive constants depending only on $\Nr{D_x\xi}$ and $B(\gm,\xi)^{-1}$ opposed to $c_*$.
From \eqref{eq:Dxte+1/2p} and the above estimation of $\Nr u$,
\begin{equation}
\Nr{D_xu+\frac12f},\Nr{u^\perp}\le\al^{1/2}\Nr u^{1/2}
\le\Cl[K]{K:uperp<al}\al.
\end{equation}

Hence,
\begin{equation}
\begin{aligned}
&\Nr{D_xu}\le3\Nr u
\le\Cl[K]{K:Dxu}\al,\\
&\Nr{D_xu+f}\le\Cr{K:Dxu}\al+\Nr f
\le\C[K]\al,\\
&\Nr{D_x(D_xu+f)}=\Nr{h-u^\perp}
\le\Nr h+\Cr{K:uperp<al}\al
\le\C[K]\al.
\end{aligned}
\end{equation}

From these inequalities, we have
\begin{equation}
\begin{aligned}
&m_0(u)\le\Nr u+\Nr{D_xu}
\le\C[K]\al,\\
&m_0(D_xu+f)\le\Nr{D_xu+f}+\Nr{D_x(D_xu+f)}
\le\C[K]\al.
\end{aligned}
\end{equation}

In the above, we assumed $f\in H^1$.
However the resulting inequalities doesn't contain $\Nr{D_xf}$ and contains only $\Nr{f}$.
Therefore, when we take $L_2$ approximation of $f\in L_2$ by $H_1$ functions, the solution converges with respect to $H^1$ topology.
Namely, the same conclusion holds even when we only assume $f\in L_2$.
\end{proof}

We simplify Lemma \ref{lemma:O-te-base} in terms of $m_0$ norm.

\begin{lemma}
\label{lemma:O-te-base-C1}
Let $f$, $h\in C^0(x)$ be known functions,
and consider equation $-D_x(D_xu+f)+u^\perp=h$ for $u$ assuming $B(\gm,\xi)>0$.
Then, there is a unique solution $u\in C^1(x)$.
Moreover, it holds that
\begin{equation}
m_0(u),m_0(D_xu)
\le C(m_0(f)+m_0(h)),
\end{equation}
where $C$ is a positive constant depending only on $\Nr{D_x\xi}$ and $B(\gm,\xi)^{-1}$, and the dependence is monotone increasing.
\end{lemma}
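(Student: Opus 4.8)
\emph{Proof plan.} The plan is to deduce this from Lemma~\ref{lemma:O-te-base} and then bootstrap regularity. Since the curves considered here are maps $\gm:S^1={\bf R}/{\bf Z}\to(M,g)$, the domain $S^1$ has length $1$, so $\Nr p\le m_0(p)$ for every $p\in C^0(x)$; in particular $f,h\in C^0(x)$ lie in $L_2(x)$ with $\Nr f\le m_0(f)$ and $\Nr h\le m_0(h)$. Hence Lemma~\ref{lemma:O-te-base} applies and provides a unique $u\in H^1$ solving $-D_x(D_xu+f)+u^\perp=h$, together with
\begin{equation*}
m_0(u),\ m_0(D_xu)\le C(\Nr f+\Nr h)\le C(m_0(f)+m_0(h)),
\end{equation*}
where $C$ is monotone increasing in $\Nr{D_x\xi}$ and $B(\gm,\xi)^{-1}$. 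This is already the asserted estimate, and since $C^1(x)\subset H^1$ on the compact $S^1$, uniqueness of the $H^1$ solution gives uniqueness within $C^1(x)$.

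It remains to see that this $u$ is of class $C^1(x)$. The coefficients of $D_x$ involve $\Gm$, which is only assumed continuous, so — as in the remark preceding Lemma~\ref{lemma:O-te-base} — I would pass to a frame field along $\gm(\cdot,t)$ that is parallel with respect to $D_x$. In that frame $D_x$ is ordinary differentiation $\pd_x$ (the frame need not be periodic, which is irrelevant for the local regularity argument), and, denoting by $\td u,\td f,\td h$ the corresponding components of $u$, $f$, $h^{\perp}$-type data, the equation reads $-\pd_x(\pd_x\td u+\td f)=\td h-(\text{components of }u^\perp)$.

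Now I bootstrap. By the one-dimensional Sobolev embedding $H^1\hookrightarrow C^0$ we have $u\in C^0$, hence $u^\perp=u-g(u,\xi)\xi\in C^0$ because $\xi\in C^0$; together with $h\in C^0$, the equation gives $D_x(D_xu+f)=u^\perp-h\in C^0$. Passing to the $D_x$-parallel frame turns this into $\pd_x(\pd_x\td u+\td f)\in C^0$; since $\pd_x\td u+\td f\in L_2(x)$ has a continuous distributional derivative, it agrees a.e.\ with a function of class $C^1(x)$, and subtracting $\td f\in C^0(x)$ yields $\pd_x\td u\in C^0(x)$, i.e.\ $\td u\in C^1(x)$. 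Transporting back through the frame gives $u\in C^1(x)$, which completes the argument. I do not expect a genuine obstacle here: the substance is the reduction to Lemma~\ref{lemma:O-te-base}, the elementary bound $\Nr{\cdot}\le m_0(\cdot)$ valid because $S^1$ has length $1$, and a one-step regularity upgrade, the only delicate point being the low regularity of $\Gm$, which is precisely why the $D_x$-parallel frame is used (alternatively one can iterate the identity $D_xp=p_x+\Gm(\xi,p)$ directly, first gaining $D_xu+f\in H^1\hookrightarrow C^0$ and then $D_xu+f\in C^1$).
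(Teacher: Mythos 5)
Your proposal is correct and follows essentially the same route as the paper: the estimate and uniqueness come from Lemma \ref{lemma:O-te-base} via $\Nr{\,\cdot\,}\le m_0(\,\cdot\,)$ on the unit-length circle, and the $C^1(x)$ regularity is a one-step upgrade from the equation. The paper carries out that upgrade by integrating the identities $u_x=D_xu-\Gm(\xi,u)$ and $\pd_x(D_xu+f)=h-u^\perp-\Gm(\xi,D_xu+f)$ directly (the alternative you mention in your closing parenthesis), rather than via a $D_x$-parallel frame, but the two are interchangeable here.
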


\begin{proof}
The estimation of $m_0$ norm directly follows from Lemma \ref{lemma:O-te-base}.
For the continuity, we see
\begin{equation}
\begin{aligned}
&\nr{u(x+\dl)-u(x)}=\nr{\int_x^{x+\dl}u_x\,dx}
=\nr{\int_x^{x+\dl}D_xu-\Gm(\xi,u)\,dx}\\
&\qquad\le\dl\{m_0(D_xu)+m_0(\Gm)m_0(u)\},\\
\end{aligned}
\end{equation}
\begin{equation}
\begin{aligned}
&\nr{(D_xu+f)(x+\dl)-(D_xu+f)(x)}
=\nr{\int_x^{x+\dl}\pd_x(D_xu+f)\,dx}\\
&\qquad=\nr{\int_x^{x+\dl}D_x(D_xu+f)-\Gm(\xi,D_xu+f)\,dx}\\
&\qquad=\nr{\int_x^{x+\dl}h-u^\perp-\Gm(\xi,D_xu+f)\,dx}\\
&\qquad\le\dl\{m_0(h)+m_0(u)+m_0(\Gm)m_0(D_xu+f)\}.
\end{aligned}
\end{equation}
Therefore, $u$, $D_xu\in C^0(x)$.
\end{proof}

Next, we estimate influence of $\Gm$, $\xi$, $f$ and $h$ in the equation $-D_x(D_xu+f)+u^\perp=h$.
We use same notation as Proof of Lemma \ref{lemma:B(gm,xi)-estimate}.

\begin{lemma}
\label{lemma:O-te-base-difference}
Assume that, for indices $i=1,2$, functions $\Gm_i$, $\xi_i$, $f_i$ and $h_i$ are given and satisfy
$m_0(\Gm_i)$, $m_0(f_i)$, $m_0(h_i)\le K$.
We also assume that $\Nr{D_x\xi}$, $B^{-1}\le K$ for each i.
Let $u_i$ be the solution to the equation $-(D_i)_x((D_i)_xu+f_i)+u-g(u,\xi_i)u=h_i$.

Let $\dl*=*_2-*_1$.
Then, there is a positive constant $K_1$ depending only on $K$, such that $m_0(\dl u)$, $m_0(\dl u_x)\le K_1\ep$ if $m_0(\dl\Gm)$, $m_0(\dl\xi)$, $m_0(\dl f)$, $m_0(\dl h)\le\ep$.
\end{lemma}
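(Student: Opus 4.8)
The plan is to subtract the two equations, rewrite the equation for $\dl u:=u_2-u_1$ in the canonical form $-(D_1)_x((D_1)_x\dl u+\td f)+\dl u^\perp=\td h$ treated in Lemma~\ref{lemma:O-te-base-C1} (with respect to $\Gm_1$ and $\xi_1$), and then bound $m_0(\td f)$ and $m_0(\td h)$ by a constant times $\ep$, using an a priori bound on $u_2$ obtained from Lemma~\ref{lemma:O-te-base-C1} applied to the second equation.

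First I would record the a priori bound. Applying Lemma~\ref{lemma:O-te-base-C1} to each equation $-(D_i)_x((D_i)_xu_i+f_i)+u_i^\perp=h_i$, with $u_i^\perp=u_i-g(u_i,\xi_i)\xi_i$ and $B(\gm_i,\xi_i)>0$, and using $\Nr{(D_i)_x\xi_i}$, $B(\gm_i,\xi_i)^{-1}$, $m_0(f_i)$, $m_0(h_i)\le K$, yields $m_0(u_i)$, $m_0((D_i)_xu_i)\le C'$, hence $m_0((D_i)_xu_i+f_i)\le C'+K$, with $C'$ depending only on $K$. Also $\nr{\xi_i}^2=1$ pointwise, so $m_0(\xi_i)=1$.

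Next, the subtraction. Put $(\dl D_x)p:=\Gm_2(\xi_2,p)-\Gm_1(\xi_1,p)=\dl\Gm(\xi_2,p)+\Gm_1(\dl\xi,p)$. Then the second-order part of the difference becomes $-(D_1)_x((D_1)_x\dl u+\td f)-(\dl D_x)((D_2)_xu_2+f_2)$ with $\td f:=(\dl D_x)u_2+\dl f$, while for the zeroth-order part, using $u_1+\dl u=u_2$, a short computation gives
\begin{equation*}
u_2^\perp-u_1^\perp=\dl u^\perp-g(u_2,\dl\xi)\xi_2-g(u_2,\xi_1)\dl\xi,
\end{equation*}
where $\dl u^\perp:=\dl u-g(\dl u,\xi_1)\xi_1$ is the orthogonal part with respect to $\xi_1$. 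The key point is that the two error terms here involve only the already-bounded $u_2$ and the small factor $\dl\xi$, and no $\dl u$, so no absorption of a $\dl u$-term is needed. Collecting everything, $\dl u$ solves
\begin{equation*}
-(D_1)_x((D_1)_x\dl u+\td f)+\dl u^\perp=\td h,\qquad
\td h:=\dl h+(\dl D_x)((D_2)_xu_2+f_2)+g(u_2,\dl\xi)\xi_2+g(u_2,\xi_1)\dl\xi.
\end{equation*}

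Finally I would estimate. From $m_0(\dl\Gm)$, $m_0(\dl\xi)$, $m_0(\dl f)$, $m_0(\dl h)\le\ep$, $m_0(\Gm_i)\le K$, $m_0(\xi_i)=1$, and the a priori bounds on $u_2$ and $(D_2)_xu_2+f_2$, every term of $\td f$ and $\td h$ is at most a $K$-dependent constant times $\ep$; hence $m_0(\td f)$, $m_0(\td h)\le K_2\ep$. Since $\Nr{(D_1)_x\xi_1}$, $B(\gm_1,\xi_1)^{-1}\le K$, Lemma~\ref{lemma:O-te-base-C1} applied to the displayed equation for $\dl u$ gives $m_0(\dl u)$, $m_0((D_1)_x\dl u)\le K_3\ep$ with $K_3$ depending only on $K$; then $m_0(\dl u_x)\le m_0((D_1)_x\dl u)+m_0(\Gm_1)m_0(\dl u)\le(1+K)K_3\ep$, which is the assertion with $K_1:=(1+K)K_3$. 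The only point requiring a little care is the algebra of the subtraction — in particular arranging the identity for $u_2^\perp-u_1^\perp$ so that all error terms are genuinely $O(\ep)$ and free of $\dl u$; once that is done the lemma is a direct consequence of Lemma~\ref{lemma:O-te-base-C1}.
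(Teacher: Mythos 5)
Your argument is correct and is essentially the paper's own proof: subtract the two equations, recast the result in the canonical form of Lemma \ref{lemma:O-te-base-C1} with source terms that are $O(\ep)$ thanks to the a priori bound on one of the solutions, and apply that lemma to $\dl u$. The only (immaterial) difference is that you normalize with respect to $(D_1)_x$, $\xi_1$ and bound $u_2$, whereas the paper works with $(D_2)_x$, $\xi_2$ and bounds $u_1$.
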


\begin{proof}
We take difference of $2$ equations and get
\begin{equation}
\label{eq:diff-eq-te}
\begin{aligned}
&-(D_2)_x((D_2)_x\dl u+(\dl\Gm)(\xi_2,u_1)+\Gm_1(\dl\xi,u_1)+\dl f)
+\dl u-g(\dl u,\xi_2)\xi_2\\
&\qquad=(\dl\Gm)(\xi_2,(D_1)_xu_1+f_1)+\Gm_1(\dl\xi,(D_1)_xu_1+f_1)\\
&\qquad\qquad\qquad+g(u_1,\dl\xi)\xi_2+g(u_1,\xi_1)\dl\xi+\dl h.
\end{aligned}
\end{equation}

We apply to this equation Lemma \ref{lemma:O-te-base-C1} with $u=\dl u$, $D=D_2$, $\xi=\xi_2$,
$f=(\dl\Gm)(\xi_2,u_1)+\Gm_1(\dl\xi,u_1)+\dl f$
and $h=\text{(right hand side of \eqref{eq:diff-eq-te})}$.
Since also $m_0((D_1)_xu_1)$ is bounded by a positive constant $K_1$ depending only on $K$,
there is a positive constant $K_2$ depending only on $K$ such that
$m_0(f)$, $m_0(h)\le K_2\ep$.

Therefore, there is a positive constant $K_3$ depending only on $K$ such that
$m_0(\dl u)$, $m_0((D_2)_x\dl u)\le K_3\ep$.
Since $(D_2)_x\dl u=\dl u_x+\Gm_2(\xi_2,\dl u)$,
we get $m_0(\dl u_x)\le K_3\ep+KK_3\ep$.
\end{proof}

\begin{corollary}
\label{corollary:O-te-t-dependence}
In the equation $-D_x(D_xu+f)+u^\perp=h$ with $B>0$, if $\Gm=\Gm(x,t)$, $f=f(x,t)$ and $h=h(x,t)$
are of class $C^0$,
then the solution $u=u(x,t)$ satisfies $u$, $u_x$, $D_x(D_xu+f)\in C^0$.
\end{corollary}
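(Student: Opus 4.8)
The plan is to prove joint continuity of $u$ and $u_x$ on $S^1\times[0,T]$ for an arbitrary finite time $T$, and then to read off continuity of $D_x(D_xu+f)$ directly from the equation. First I would fix $T$ and record that, since $\Gm$, $f$, $h$ and (by the standing framework) $\xi\in C^1$ are continuous on the compact set $S^1\times[0,T]$, they are bounded and uniformly continuous there; in particular $t\mapsto m_0(\Gm)(t),\,m_0(f)(t),\,m_0(h)(t)$ and $t\mapsto\Nr{D_x\xi}(t)$ (recall $D_x\xi=\xi_x+\Gm(\xi,\xi)$) are bounded on $[0,T]$. Since the hypothesis is $B>0$, Lemma \ref{lemma:B(gm,xi)-estimate} shows that $t\mapsto B(\gm,\xi)(t)$ is continuous and strictly positive, hence bounded below by some $\beta>0$ on $[0,T]$. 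Thus all the quantities appearing in the hypotheses of Lemma \ref{lemma:O-te-base-C1} and Lemma \ref{lemma:O-te-base-difference} are dominated, uniformly in $t\in[0,T]$, by a single constant $K_0$.

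Next, for each fixed $t$ Lemma \ref{lemma:O-te-base-C1} already gives $u(\cdot,t),D_xu(\cdot,t)\in C^0(x)$, whence $u_x(\cdot,t)=D_xu(\cdot,t)-\Gm(\xi,u)(\cdot,t)\in C^0(x)$ as well. To upgrade this to joint continuity I would fix $t_0\in[0,T]$ and apply Lemma \ref{lemma:O-te-base-difference} with data index $1$ the values at $t_0$ and index $2$ the values at a nearby $t$: the hypotheses hold with the above $K_0$, the constant $K_1$ produced depends only on $K_0$, and the role of ``$\ep$'' is played by
\[ \ep(t):=\max\{m_0(\Gm(\cdot,t)-\Gm(\cdot,t_0)),\,m_0(\xi(\cdot,t)-\xi(\cdot,t_0)),\,m_0(f(\cdot,t)-f(\cdot,t_0)),\,m_0(h(\cdot,t)-h(\cdot,t_0))\}, \]
which tends to $0$ as $t\to t_0$ by uniform continuity. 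The lemma then yields $m_0(u(\cdot,t)-u(\cdot,t_0))\le K_1\ep(t)$ and $m_0(u_x(\cdot,t)-u_x(\cdot,t_0))\le K_1\ep(t)$; that is, $t\mapsto u(\cdot,t)$ and $t\mapsto u_x(\cdot,t)$ are continuous maps into $C^0(S^1)$ equipped with the sup norm. A function on $S^1\times[0,T]$ that is continuous in $x$ for each $t$ and whose $x$-slices depend continuously on $t$ in sup norm is jointly continuous, so applying this to $u$ and to $u_x$ gives $u,u_x\in C^0$.

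For the third assertion I would simply rewrite the equation as $D_x(D_xu+f)=u^\perp-h=u-g(u,\xi)\xi-h$; the right-hand side is continuous on $S^1\times[0,T]$ because $u$ is (just proved), $\xi$ and $h$ are by hypothesis, and $g$ is smooth. Since $T$ was arbitrary, this completes the argument. I expect the only genuine point — everything else being soft compactness together with the already-established estimates — to be the uniform control of the hypotheses of Lemma \ref{lemma:O-te-base-difference} on a neighbourhood of each $t_0$, and in particular the uniform lower bound for the curve bentness $B(\gm,\xi)$; this is exactly where Lemma \ref{lemma:B(gm,xi)-estimate} and the standing assumption $B>0$ enter.
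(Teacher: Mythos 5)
Your argument is correct and is exactly the route the paper intends: the corollary is stated without an explicit proof precisely because it follows from Lemma \ref{lemma:O-te-base-C1} (continuity in $x$ at each fixed $t$) together with Lemma \ref{lemma:O-te-base-difference} applied to the data at two nearby times, with the uniform lower bound on $B(\gm,\xi)$ over the compact time interval supplied by Lemma \ref{lemma:B(gm,xi)-estimate}, and with $D_x(D_xu+f)=u^\perp-h$ read off from the equation. Your write-up simply makes these implicit steps explicit, so there is nothing to correct.
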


We apply the above to $\rm(O_\te)$.

\begin{proposition}
\label{proposition:O-te-sol-and-diff}
The linear ODE $\rm(O_\te):$ $-D_x(D_x\te+\Psi)+\te^\perp=\Phi$ for $\te$ with $B>0$ has a unique continuous solution $\te(x,t)\in\Cls10$ if $\Gm(x,t)$, $R(x,t)\in C^0$, $\xi(x,t)\in C^1$ and $\eta(x,t)\in C^0$.
The solution satisfies $m_0(\te)$, $m_0(\te_x)\le K_1$.
Here, the domain of $t$ is $[0,T]$, and $K_1$ is a positive constant depending only on $M_0(\Gm,T)$, $M_0(R,T)$, $M_1(\xi,T)$, $M_0(\eta,T)$ and $B^{-1}$.

Moreover, the difference of solutions for two data are estimated as follows.
Let functions $\Gm_i$, $R_i$, $\xi_i$ and $\eta_i$ satisfy the above condition for $i=1,2$.
Let $\dl*=*_2-*_1$.
Then there is a positive constant $K_2$ depending only on $M_0(\Gm,T)$, $M_0(R,T)$, $M_1(\xi,T)$, $M_0(\eta,T)$ and $B^{-1}$,
such that $m_0(\dl\te)$, $m_0(\dl\te_x)\le K_2\ep$
if $m_0(\dl\Gm)$, $m_0(\dl R)$, $m_1(\dl\xi)$, $m_0(\dl\eta)\le\ep$.
\end{proposition}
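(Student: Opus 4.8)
The plan is to reduce the statement to the $t$-fixed results already established --- Lemma~\ref{lemma:O-te-base-C1}, Corollary~\ref{corollary:O-te-t-dependence} and Lemma~\ref{lemma:O-te-base-difference} --- after checking that the inhomogeneous data $\Psi$ and $\Phi$ are continuous and are controlled by elementary polynomial estimates. Under the stated regularity $\Gm,R\in C^0$, $\xi\in C^1$, $\eta\in C^0$, the vector fields $D_x\xi=\xi_x+\Gm(\xi,\xi)$ and $D_t\xi=\xi_t+\Gm(\eta,\xi)$ are continuous, so that $\Psi=R(\xi,D_x\xi)\xi-R(\xi,D_t\xi)\eta$ and $\Phi=(\nr{D_t\xi}^2-\nr{D_x\xi}^2)\xi-R(\xi,\eta)\eta$ lie in $C^0$. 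Since $\Psi$ and $\Phi$ are polynomials in $\xi,\xi_x,\xi_t,\eta$ with coefficients built from $\Gm$ and $R$, there is a positive constant depending only on $M_0(\Gm,T)$, $M_0(R,T)$, $M_1(\xi,T)$, $M_0(\eta,T)$ which bounds $M_0(\Psi,T)$ and $M_0(\Phi,T)$.

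Next, for each fixed $t\in[0,T]$ equation $\rm(O_\te)$ is exactly the equation $-D_x(D_xu+f)+u^\perp=h$ of Lemma~\ref{lemma:O-te-base-C1} with $f=\Psi(\cdot,t)$, $h=\Phi(\cdot,t)\in C^0(x)$, unit field $\xi$, and $B(\gm,\xi)>0$. That lemma gives a unique $\te(\cdot,t)\in C^1(x)$ with $m_0(\te)(t),\,m_0(D_x\te)(t)\le C\bigl(m_0(\Psi)(t)+m_0(\Phi)(t)\bigr)$, where $C$ depends monotonically on $\Nr{D_x\xi}$ and $B^{-1}$. Bounding $\Nr{D_x\xi}\le m_0(D_x\xi)$ by a polynomial in $M_1(\xi,T)$ and $M_0(\Gm,T)$, and then using $\te_x=D_x\te-\Gm(\xi,\te)$, converts this into $m_0(\te),\,m_0(\te_x)\le K_1$ with $K_1$ of the asserted dependence. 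Joint continuity of $\te$ and $\te_x$ on $S^1\times[0,T]$, i.e. $\te\in\Cls10$, follows at once from Corollary~\ref{corollary:O-te-t-dependence}, since $\Gm,\Psi,\Phi\in C^0$ there.

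For the difference statement I would feed the two data sets into Lemma~\ref{lemma:O-te-base-difference} with $f_i=\Psi_i$, $h_i=\Phi_i$; the bounds on $m_0(\Gm_i),m_0(f_i),m_0(h_i),\Nr{D_x\xi_i},B^{-1}$ demanded there are precisely the uniform bounds obtained above. The only thing left to verify is that $m_0(\dl\Psi)$ and $m_0(\dl\Phi)$ are bounded by a constant (depending only on those uniform bounds) times $\ep$ whenever $m_0(\dl\Gm),m_0(\dl R),m_1(\dl\xi),m_0(\dl\eta)\le\ep$; this is the routine argument of writing each $p_2$ as $p_1+\dl p$ inside the polynomial expressions for $\Psi,\Phi$ and observing that every surviving term carries at least one factor $\dl p$. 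Lemma~\ref{lemma:O-te-base-difference} then yields $m_0(\dl\te),\,m_0(\dl\te_x)\le K_2\ep$ with $K_2$ of the stated dependence.

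I expect the only real work to be bookkeeping of constants --- keeping each auxiliary constant a function of $M_0(\Gm,T),M_0(R,T),M_1(\xi,T),M_0(\eta,T),B^{-1}$ alone, and not of $\te$ itself --- together with the routine conversions between $m_0$ and $\Nr{\cdot}$ and between $D_x$ and $\pd_x$. There is no genuine analytic obstacle here: the delicate a priori bound for $-D_x(D_xu+f)+u^\perp=h$ (the $B^{-4}$-type estimate) is already supplied by Lemma~\ref{lemma:O-te-base}, and the present proposition merely packages it together with the polynomial estimates for $\Psi$ and $\Phi$.
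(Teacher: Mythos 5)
Your proposal is correct and follows essentially the same route as the paper: bound $\Psi,\Phi$ in $C^0$ by polynomial estimates, apply Lemma~\ref{lemma:O-te-base-C1} for each fixed $t$, obtain $\te\in\Cls10$ from the $t$-dependence result (the paper cites Lemma~\ref{lemma:O-te-base-difference} directly, you cite its Corollary~\ref{corollary:O-te-t-dependence}, which is the same content), and handle the difference estimate by the $p_2=p_1+\dl p$ expansion fed into Lemma~\ref{lemma:O-te-base-difference}. No gaps.
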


\begin{proof}
The functions $\Psi=R(\xi,D_x\xi)\xi-R(\xi,D_t\xi)\eta$
and $\Phi=(\nr{D_t\xi}^2-\nr{D_x\xi}^2)\xi-R(\xi,\eta)\eta$
are of class $C^0$, and satisfy $m_0(\Psi)$, $m_0(\Phi)\le K_1$.
Therefore, by applying Lemma \ref{lemma:O-te-base-C1},
we see $m_0(\te)$, $m_0(D_x\te)\le CK_1$.
And, by Lemma \ref{lemma:O-te-base-difference}, we get $\te\in\Cls10$.

In the latter half, there is a positive constant $K_2$ depending only on $K_1$ such that
$m_0(\dl\Psi)$, $m_0(\dl\Phi)\le K_2\ep$,
and there is another positive constant $K_3$ such that $m_0(\dl\te)$, $m_0(\dl\te_x)\le K_3\ep$ by Lemma \ref{lemma:O-te-base-difference}.
\end{proof}

\subsection
{Existence of a short time solution}
\label{subsection:short-time}

Let $\gm(x,0)=a(x)\in C^1(x)$, $\eta(x,0)=b(x)\in C^0(x)$, $\xi(x,0)=\td a(x)\in C^1(x)$ and $\xi_t(x,0)=\td b(x)\in C^0(x)$ be the initial data.
We assume that $\nr{\td a}^2=1$, $g(\td a,\td b)=0$ and $B(a,\td a)>0$.

We also asuume that the curve $a(x)$ is regular, so that there is a $C^\infty$ orthonormal frame field $\{e_i\}$ on the whole tubular neibourhood.
Note that we may assume that the tubular neibourhood is topologically tirivial by taking a double covering if necessary.
We use differentiability of $a(x)$ only for the existence of the frame field.
Except that, we need only continuousness of $a(x)$.
However, the initial curve $a(x)$ is a unit speed $C^1$ curve in the equation of motion \eqref{eq:base-single} anyway.

We define the space of functions on the time interval $[0,T]$ satisfying the initial condition, as follows.
\begin{equation}
\begin{aligned}
&\mathcal S_\gm(T,r)
:=\{\gm\in\Cls01\mid
M_{0,1}(\gm,T)\le r,
\gm^i(x,0)=a^i(x)\},\\
&\mathcal S_\eta(T,r)
:=\{\eta\in\Cls01\mid
M_{0,1}(\eta,T)\le r,
\eta^i(x,0)=b^i(x)\},\\
&\mathcal S_\xi(T,r)
:=\{\xi\in C^1\mid
M_1(\xi,T)\le r,
\xi^i(x,0)=\td a^i(x),\xi^i_t(x,0)=\td b^i(x)\},\\
&\mathcal S_\te(T,r_1)
:=\{\te\in\Cls10\mid
M_{1,0}(\te,T)\le r_1\}.
\end{aligned}
\end{equation}

We define $T$ and $r$ as follows.
First, for $T=1$, we choose $r>2m_0(\td a,(\td a)',\td b)$ so that $\mathcal S_\gm(1,r)$, $\mathcal S_\xi(1,r)$ and $\mathcal S_\eta(1,r)$ are not empty set.
Next, we choose $T=T_1\le1$ so that the following conditions are satisfied on the time interval $[0,T_1]$.

\begin{enumerate}
\item
The point $\gm(x,t)$ is in the given tubular neighborhood for each $x$.
\item
$B(\gm(x,t),\xi(x,t))\ge B_0>0$.
\end{enumerate}

If $T$ in the expression $\mathcal S_\gm(T,r)$ is sufficiently small, then $\gm(x,t)$ is close to $a(x)$ with respect to $C^0(x)$-topology.
Hence the condition (1) holds.
If $T$ in the expression $\mathcal S_\xi(T,r)$ is sufficiently small, then $\gm(x,t)$ and $\xi(x,t)$ are close to $a(x)$ and $\td a(x)$ respectively with respect to $C^0(x)$-topology, by Lemma \ref{lemma:B(gm,xi)-estimate}.
Hence the condition (2) holds.
Therefore, condition (1) and (2) hold when $T_1$ is sufficiently small.

Let
$
\mathcal S(T)
:=\mathcal S_\gm(T,r)\times\mathcal S_\xi(T,r)\times\mathcal S_\eta(T,r)
$
and $(\gm,\xi,\eta)\in\mathcal S(T_1)$, and solve $\rm(O_\te)$.
Then, the solution $\te$ is, by Proposition \ref{proposition:O-te-sol-and-diff}, estimated as $M_{1,0}(\te,T)\le r_1$ where $r_1$ is a positive constant depending only on $T_1$, $r$ and the given tubular neighborhood.
We define $\mathcal S_\te(T_1,r_1)$ using this $r_1$.

In below, we will rechoose smaller $T$.
By Proposition \ref{proposition:wave-eq-C1-solution},
there is a positive constant $T_2\le T_1$ depending only on $m_0(\td a,(\td a)',\td b)$, $r$, $r_1$ and $T_1$,
such that the solution to equation $\rm(W_\xi)$ satisfies $M_1(\xi,T_2)\le r$
when
$\gm\in\mathcal S_\gm(T_2,r)$,
$\eta\in\mathcal S_\eta(T_2,r)$
and $\te\in\mathcal S_\te(T_2,r_1)$.
Similarly, by proposition \ref{proposition:O-eta-existence},
there is a positive constant $T_3\le T_1$ depending only on $r$, $r_1$ and $T_1$,
such that the solution to equation $\rm(W_\eta)$ satisfies $M_1(\eta,T_3)\le r$
when
$\gm\in\mathcal S_\gm(T_3,r)$,
$\xi\in\mathcal S_\xi(T_3,r)$
and $\te\in\mathcal S_\te(T_3,r_1)$.

Let $T_4:=\min\{T_2,T_3\}$.
For $(\gm,\xi,\eta)\in\mathcal S(T_4)$,
let $\te\in\mathcal S_\te$ be the solution to $\rm(O_\te)$,
and, using them as known functions, we solve $\rm(O_\gm)$, $\rm(W_\xi)$ and $\rm(O_\eta)$,
and let $\td\gm$, $\td\xi$ and $\td\eta$ be solutions.
Then, by the definition of $T_4$, we have $(\td\gm,\td\xi,\td\eta)\in\mathcal S(T_4)$.
It means that the correspondence $\Lm$ is a mapping from $\mathcal S(T_4)$ to $\mathcal S(T_4)$.

Moreover, we can estimate difference of solutions for different data as follows.
For $i=1,2$ and given $(\gm_i,\xi_i,\eta_i)\in\mathcal S(T_4)$,
let $\te_i$ and $(\td\gm_i,\td\xi_i,\td\eta_i)\in\mathcal S(T_4)$ be as above.
Let $\dl*=*_2-*_1$ be the difference.
There is a positive constant $K$ depending only on $T_4$, $r$ and $r_1$ such that
if $T\le T_4$ and
$M_{0,1}(\dl\gm,T)$, $M_0(\dl\Gm,T)$, $M_0(\dl R,T)$,
$M_1(\dl\xi,T)$, $M_{0,1}(\dl\eta,T)$,
$M_{1,0}(\dl\te,T)\le\ep$,
then the followings hold.

\begin{enumerate}
\item
$M_{0,1}(\dl\td\gm,T)\le KT\ep$\quad
(Proposition \ref{proposition:O-gm-difference}),
\item
$M_1(\dl\td\xi,T)\le KT\ep$\quad
(Lemma \ref{lemma:wave-eq-C1-solution-difference}),
\item
$M_0(\dl\td\eta,T)\le KT\ep$ and $M_0(\dl\td\eta_t,T)\le K\ep$\quad
(Proposition \ref{proposition:O-eta-difference}),
\item
$M_{1,0}(\dl\td\te,T)\le K\ep$,\quad
and does not depend on $M_0(\dl\eta_t,T)$
(Proposition \ref{proposition:O-te-sol-and-diff}).
\end{enumerate}

For $(\gm,\xi,\eta)\in\mathcal S(T_4)$,
let $(\td\gm,\td\xi,\td\eta)=\Lm(\gm,\xi,\eta)$,
and let $\td\te$ be the solution to $\rm(O_\te)$ using $(\td\gm,\td\xi,\td\eta)$,
and let $\td{\td\eta}$ be the solution to $\rm(O_\eta)$ using $(\td\gm,\td\xi,\td\te)$.
We define a mapping $\td\Lm:\mathcal S(T_4)\to\mathcal S(T_4)$ by
$\td\Lm(\gm,\xi,\eta)=(\td\gm,\td\xi,\td{\td\eta})$.
Since if $M_{0,1}(\dl\gm,T)$, $M_1(\dl\xi,T)$, $M_{0,1}(\dl\eta,T)\le\ep$
then
$M_{0,1}(\dl\td\gm,T)$, $M_1(\dl\td\xi,T)$, $M_0(\dl\td\eta,T)\le KT\ep$,
we have $M_{1,0}(\dl\td\te,T)\le K^2T\ep$.
Therefore, $M_{0,1}(\dl\td{\td\eta},T)\le K\max\{K,K^2\}T\ep$.
We choose a positive constant $T_5\le T_4$ so that $K\max\{1,K^2\}T_5<1$.
The mapping $\td\Lm:\mathcal S(T_5)\to\mathcal S(T_5)$ is a contraction mapping with respect to the norm
$M_{0,1}(\gm,T_5)+M_1(\xi,T_5)+M_{0,1}(\eta,T_5)$,
hence has a unique fixed point.

Let $(\gm,\xi,\eta)$ be the fixed point.
Then $(\td\gm,\td\xi,\td{\td\eta})=(\gm,\xi,\eta)$,
and
\begin{equation}
\begin{aligned}
&-D_x(D_x\te+\Psi)+\te^\perp=\Phi,
&&-D_x(D_x\td\te+\td\Psi)+\td\te^\perp=\td\Phi,\\
&D_t\td\eta=D_x\te+\td\Psi+D_x\xi,
&&D_t\td{\td\eta}=D_x\td\te+\td{\td\Psi}+D_x\td\xi.
\end{aligned}
\end{equation}
Here, $\td\Psi$ and $\td\Phi$ are $\Psi$ and $\Phi$ substituted $\td\eta$ respectively, and $\td{\td\Psi}$ is $\Psi$ substituted $\td{\td\eta}$.
Therefore, it holds that
\begin{equation}
\begin{aligned}
&-D_x(D_x(\td\te-\te)+(\td\Psi-\Psi))+(\td\te-\te)^\perp=(\td\Phi-\Phi),\\
&D_t\td\eta-D_t\eta=-D_x(\td\te-\te)+(\td\Psi-\Psi).
\end{aligned}
\end{equation}
Hence, by Lemma \ref{lemma:O-te-base-C1},
we see $M_0(D_x(\td\te-\te),T)\le K_6M_0(\td\eta-\eta,T)$,
and by Proposition \ref{proposition:O-eta-difference},
it hols that $M_0(\td\eta-\eta,T)\le K_6TM_0(\td\eta-\eta)$.
Here, $K_6$ is a positive constant depending only on $T_5$, $r$, $r_1$ and $K$.
Therefore, if we replace $T_5$ by a positive constant $T_6\le T_5$ such that $K_6T_6<1$, then we have $\td\eta=\eta$ and $\td\te=\te$.

We proved that the fixed point of the mapping $\td\Lm:\mathcal S(T_6)\to\mathcal S(T_6)$ is a fixed point of the mapping $\Lm:\mathcal S(T_6)\to\mathcal S(T_6)$.
That is, the fixed point is a solution to the coupled system \eqref{eq:base-coupled}.
We summarize the above as follows.
Note that the equality $\xi=\gm_x$ is not yet shown at this stage.

\begin{proposition}
\label{proposition:short-time-C2-existence}
We consider the coupled system \eqref{eq:base-coupled} with initial data
$\gm(x,0)=a(x)\in C^1(x)$,
$\eta(x,0)=b(x)\in C^0(x)$,
$\xi(x,0)=\td a(x)\in C^1(x)$ and $\xi_t(x,0)=\td b(x)\in C^0(x)$.
Assume that they satisfy the length element preserving condition: $\nr{\td a}^2=1$, $g(\td a,\td b)=0$ and the non-geodesic condition: $B(a,\td a)>0$, and that the curve $a(x)$ is regular.
Then, there exists a unique short time solution.
Moreover, the solution satisfies
$\gm\in\Cls02$,
$\xi\in C^1$,
$\eta\in\Cls01$,
and $\te$, $\te_x$, $\pd_x(D_x\te+\Psi)\in C^0$.
\end{proposition}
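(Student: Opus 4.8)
The plan is to harvest the construction carried out in the paragraphs of Subsection~\ref{subsection:short-time} preceding the statement. First I would fix, in this order, the data of the iteration: a radius $r>2m_0(\td a,(\td a)',\td b)$ large enough that $\mathcal S_\gm(1,r)$, $\mathcal S_\xi(1,r)$, $\mathcal S_\eta(1,r)$ are nonempty; a first time bound $T_1\le 1$ on which the tubular neighborhood condition and $B(\gm,\xi)\ge B_0>0$ hold on $\mathcal S(T_1)$ (this uses Lemma~\ref{lemma:B(gm,xi)-estimate}); the radius $r_1$ furnished by Proposition~\ref{proposition:O-te-sol-and-diff} for the $\te$ produced by $\rm(O_\te)$, which defines $\mathcal S_\te(T_1,r_1)$; the successive shrinkings $T_2,T_3$ (from Propositions~\ref{proposition:wave-eq-C1-solution} and~\ref{proposition:O-eta-existence}) that make $\Lm$ a self-map of $\mathcal S(T_4)$, $T_4=\min\{T_2,T_3\}$; the shrinking $T_5$ making the two-step map $\td\Lm$ a contraction for the norm $M_{0,1}(\gm,\cdot)+M_1(\xi,\cdot)+M_{0,1}(\eta,\cdot)$; and finally $T_6\le T_5$ ensuring, via Lemma~\ref{lemma:O-te-base-C1} and Proposition~\ref{proposition:O-eta-difference}, that the fixed point of $\td\Lm$ is in fact a fixed point of $\Lm$. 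The fixed point $(\gm,\xi,\eta)$, together with the $\te$ solving $\rm(O_\te)$ for it, is then a solution of the coupled system~\eqref{eq:base-coupled} on $[0,T_6]$---this is precisely the computation displayed just before the statement---which gives the existence claim.

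Next I would read off the stated regularity from the cited results: $\xi\in C^1$ by Proposition~\ref{proposition:wave-eq-C1-solution}; $\eta\in\Cls01$ by Proposition~\ref{proposition:O-eta-existence}; $\te,\te_x\in C^0$ and $\pd_x(D_x\te+\Psi)\in C^0$ by Proposition~\ref{proposition:O-te-sol-and-diff} and Corollary~\ref{corollary:O-te-t-dependence}, using $\pd_x(D_x\te+\Psi)=D_x(D_x\te+\Psi)-\Gm(\xi,D_x\te+\Psi)$ with both summands continuous; and $\gm\in\Cls02$ because $\gm\in\Cls01$ (Proposition~\ref{proposition:O-gm-sol}) while $\rm(O_\gm)$ gives $\gm_t=\eta\in\Cls01$, hence $\gm_{tt}=\eta_t\in C^0$.

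For uniqueness I would show that any solution $(\gm,\xi,\eta,\te)$ with the stated regularity and the prescribed initial data is a fixed point of $\td\Lm$. Since the tuple is continuous in $t$ and, at $t=0$, meets the defining bounds of $\mathcal S(\cdot)$ with room to spare (this is why $r$ was taken strictly larger than the initial-data norms), there is $T'\in(0,T_6]$ with $(\gm,\xi,\eta)\in\mathcal S(T')$; then $\te$, being the unique $C^1(x)$ solution of the linear equation $\rm(O_\te)$ (Lemma~\ref{lemma:O-te-base-C1}, using $B>0$), lies in $\mathcal S_\te(T',r_1)$. Uniqueness in each decoupled problem---ODE uniqueness for $\rm(O_\gm)$ and $\rm(O_\eta)$ (Lemma~\ref{lemma:ODE-t-existence}) and uniqueness of the solution of the semilinear wave integral equation for $\rm(W_\xi)$ (Proposition~\ref{proposition:wave-eq-C1-solution})---then forces $(\gm,\xi,\eta)$ to be a fixed point of $\Lm$, hence of $\td\Lm$, on $[0,T']$; by the contraction property it agrees there with the solution built above, and a standard continuation argument on $[0,T_6]$---the set of times where the two solutions agree is open, closed and nonempty---propagates the agreement to the whole interval, after which $\te$ is determined as well.

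The substantive analysis is entirely in the earlier subsections---the $C^1$ wave estimate for $\rm(W_\xi)$ without assuming $\gm\in C^2$, the properties of the curve bentness $B(\gm,\xi)$, and the geodesic-sensitive estimate for $\rm(O_\te)$---so the only genuine work in the proof itself is organizational: checking that the hypotheses of each cited proposition hold for the nested $T_i$ and the fixed radii $r,r_1$, and making sure the uniqueness step really traps every solution inside the ball on which $\td\Lm$ contracts. I expect the bookkeeping of the time scales, together with the ``every solution is a fixed point of $\td\Lm$'' verification, to be the only delicate points.
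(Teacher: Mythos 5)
Your proposal is correct and follows essentially the same route as the paper: the paper's ``proof'' of this proposition is precisely the construction in the preceding paragraphs of Subsection~\ref{subsection:short-time} (the nested time scales $T_1,\dots,T_6$, the self-map $\Lm$, the contraction $\td\Lm$, and the verification that its fixed point is a fixed point of $\Lm$), which you reproduce faithfully and from which you read off the stated regularity exactly as the paper intends. Your more explicit uniqueness argument (trapping any solution with the stated regularity inside the ball where $\td\Lm$ contracts, then continuing) is a reasonable fleshing-out of what the paper leaves implicit, not a different method.
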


Next, we study the regularity of solutions.
We use a priori estimate, which is justified by the contraction mapping theorem.
That is, $M_1(D_x\xi)$ and others are bounded when we take the convergent sequence by the contraction mapping.

To show the regularity, we need compatibility conditions for initial data.
Let $\pd/\pd x^i=h^j{}_ie_j$ be the transformation rule of $e_i$ and $\pd/\pd x^i$.
The equality $\xi=\gm_x$ is expressed as
$\td a^ie_i=(a^i)'\pd/\pd x^i=(a^i)'h^j{}_ie_j$.
The equality $D_t\xi=D_x\eta$ is written as
$\td b+\Gm(b,\td a)=b'+\Gm(\td a,b)$,
because
$D_t\xi=\xi_t+\Gm(\eta,\xi)=\td b+\Gm(b,\td a)$
and
$D_x\eta=\eta_x+\Gm(\xi,\eta)=b'+\Gm(\td a,b)$.

\begin{proposition}
\label{proposition:short-time-C3-exsitence}
If the initial data of the short time solution satisfies the length element preserving condition and the compatibility condition $\td a^i=h^i{}_j(a^j)'$ and $\td b+\Gm(b,\td a)=b'+\Gm(\td a,b)$,
and if $a(x)\in C^3(x)$ and $b(x)\in C^2(x)$,
then the short-time solution satisfies $\gm_x=\xi$,
and we have $\gm\in C^3$.
Moreover, it is a solution to the equation of motion \eqref{eq:base-single}.
\end{proposition}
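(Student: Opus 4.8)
The plan is to first upgrade the regularity of the short-time solution furnished by Proposition \ref{proposition:short-time-C2-existence}, using the extra smoothness $a\in C^3(x)$, $b\in C^2(x)$ of the initial data; then to recognise the compatibility conditions as exactly the initial conditions required in Proposition \ref{proposition:restore-base-single}, apply that proposition to obtain $\gm_x=\xi$ and the fact that $\gm$ solves \eqref{eq:base-single}; and finally to deduce $\gm\in C^3$ from $\gm_x=\xi$, $\gm_t=\eta$ and the regularity gained in the first step.

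For the regularity bootstrap I would not differentiate the solution directly but carry a priori estimates through the contraction iteration of Subsection \ref{subsection:short-time}, as announced there: the fixed point is a $C^1$-limit of iterates, so any bound that is uniform along the iteration passes to the limit by the Arzel\`a--Ascoli theorem. Since $a\in C^3(x)$ makes $a'$, $a''$ of class $C^2(x)$, $C^1(x)$ and $b\in C^2(x)$, one more differentiation of the wave integral formula of Lemma \ref{lemma:I(a,b,f,h)-C1-formula} (i.e.\ a $C^1$-analysis of the equation for $D_x\xi$, giving a bound on $M_1(D_x\xi,T)$), together with the ODE estimates of Lemmas \ref{lemma:ODE-t-existence}--\ref{lemma:ODE-t-contraction}, the elliptic estimate of Lemma \ref{lemma:O-te-base-C1}, and Corollary \ref{corollary:O-te-t-dependence} applied to the $t$-differentiated form of $\rm(O_\te)$, keep the higher norms uniformly bounded on a possibly shorter common interval $[0,T_7]$. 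Passing to the limit gives $\xi\in C^2$, $\eta\in C^1$ with $\eta_{xt}$ continuous, $\gm\in C^2$ (hence $\Gm=\Gm(\gm)\in C^2$), and $\te$, $\te_x$, $\te_t$, $\te_{xt}\in C^0$. In particular $\gm,\eta,\xi\in C^1$ and $\gm_{tx}=\gm_{xt}$, $\eta_{tx}=\eta_{xt}$, $\xi_{tt}$, $\xi_{xx}$ are continuous, which are exactly the regularity hypotheses of Proposition \ref{proposition:restore-base-single}.

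By the computation preceding the statement, $\td a^i=h^i{}_j(a^j)'$ is literally $\xi=\gm_x$ at $t=0$, while $\td b+\Gm(b,\td a)=b'+\Gm(\td a,b)$ is $D_t\xi=D_x\eta$ at $t=0$; since $\gm\in C^2$ gives $\gm_{xt}=\gm_{tx}$, one has $D_x\eta=D_x\gm_t=D_t\gm_x$ there, so the compatibility conditions are precisely ``$\gm_x=\xi$ and $D_t\xi=D_t\gm_x$ at $t=0$''. Proposition \ref{proposition:restore-base-single} then applies: $\gm_x=\xi$ on all of $[0,T_7]$, and $\gm$ satisfies \eqref{eq:base-single} with $\mu=\nr{\Dx\xi}^2-\nr{\Dt\xi}^2-g(\te,\xi)-1$. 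Finally, with $\gm_x=\xi$ and $\gm_t=\eta$, every partial $\pd_x^k\pd_t^\ell\gm$ with $k+\ell\le3$ is either $\gm$ itself, or (for $k\ge1$) a partial of $\xi$ of total order $\le2$, or (for $k=0$) one of $\eta$, $\eta_t$, $\eta_{tt}$; the first two are continuous by the bootstrap, and $\eta_{tt}$ is expressed through $\rm(O_\eta)$ in terms of $\Gm_t$, $\eta_t$, $\te_t$, $\te_{xt}$, $\xi_{xt}$, $\xi_{tt}$ and $R_t$, all continuous by the bootstrap, so $\gm\in C^3$.

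The main obstacle is the $x$-derivative bootstrap of the hyperbolic equation $\rm(W_\xi)$: its right-hand side contains a term $h_x$ with $h=\Gm(\xi,\xi)$, which is controlled only through $h_t$ and $\wh h$ (cf.\ Lemma \ref{lemma:I(a,b,f,h)-C1-formula}), so pushing an extra $x$-derivative through the integral operator $I$ demands careful bookkeeping of which quantities may legitimately be differentiated in $x$ --- this is exactly why one needs the compatibility condition $\td a^i=h^i{}_j(a^j)'$ and the smoothness of $a$ itself, not merely of $\xi(\cdot,0)$. A secondary complication is that the bootstrap is coupled --- the regularity gained for each of $\rm(O_\te)$, $\rm(O_\gm)$, $\rm(W_\xi)$, $\rm(O_\eta)$ feeds the others --- so it must be carried out simultaneously inside the contraction scheme, with a verification that the higher-order estimates still close on one uniform time interval.
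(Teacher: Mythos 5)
Your overall strategy coincides with the paper's: higher-order a priori estimates justified through the contraction scheme, a $C^1$-analysis of the characteristic derivatives coming from Lemma \ref{lemma:I(a,b,f,h)-C1-formula} coupled with $\rm(O_\eta)$ and the $t$-differentiated $\rm(O_\te)$ and closed by a Gronwall inequality, then Proposition \ref{proposition:restore-base-single}. However, there is a genuine gap in the intermediate claim that the bootstrap yields ``$\xi\in C^2$, $\eta\in C^1$ with $\eta_{xt}$ continuous, $\gm\in C^2$'' \emph{before} the restoration step. The contraction scheme controls $\gm$ and $\eta$ only in $\Cls01$: no $x$-derivative of $\gm$ or $\eta$ enters the iterated norms, so no bound on $\gm_x$ or $\eta_x$ can simply ``pass to the limit''. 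Moreover, the extra differentiation of the wave integral formula controls $\pd_x(D_x\xi)$ and $\xi_{tx}$ (through $\wh u^{\pm}_x$), \emph{not} $\xi_{xx}$: the discrepancy is $\pd_x(\Gm(\xi,\xi))$, which contains $\Gm_x=\Gm'(\gm)\gm_x$ and therefore already presupposes continuity of $\gm_x$. Consequently the hypotheses of Proposition \ref{proposition:restore-base-single} ($\gm\in C^1$, $\gm_{xt}=\gm_{tx}$, $\eta_{xt}=\eta_{tx}$, $\xi_{xx}$ continuous) are not available at the point where you invoke it, and your identification of the second compatibility condition with $D_t\xi=D_t\gm_x$ at $t=0$ rests on the very regularity ($\gm\in C^2$) you have not yet earned.

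The missing link, which is the actual content of the paper's argument, is the recovery of $x$-regularity of $\gm$ and $\eta$ \emph{from the equations} after the $t$-direction Gronwall estimate: since $D_xD_t\eta=D_x(D_x\te+\Psi)+D_x^2\xi=\Phi-\te^\perp+D_x^2\xi$ is continuous once $\pd_x(D_x\xi)$ is, one concludes that $\eta_{tx}$ is continuous, hence $\eta_x$ is continuous by integration in $t$ from $\eta_x(x,0)=b'(x)$, hence $\gm_{tx}$ and then $\gm_x$ are continuous; only then do $\gm_{xt}=\gm_{tx}$, $D_t\gm_x=\Dx\eta=D_x\eta+\Gm(\gm_x-\xi,\eta)$ and the continuity of $\xi_{xx}$ become available, so that Proposition \ref{proposition:restore-base-single} applies and yields $\gm_x=\xi$. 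Only after that does $\xi_x=D_x\xi-\Gm(\xi,\xi)\in C^1$, hence $\gm_x=\xi\in C^2$ and, with $D_t^2\eta$ continuous, $\gm\in C^3$. Without this chain your deduction does not close; with it, your proposal becomes the paper's proof.
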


\begin{proof}
Under notations in Lemma \ref{lemma:I(a,b,f,h)-C1-formula},
for $u=I(\td a,\td b,f,h)$, let
\begin{equation}
\begin{aligned}
\wh u^\pm(x,t)&:=D_x\xi(x\mp t,t)\pm\xi_t(x\mp t,t)
=u^\pm(x\mp t,t)+h(x\mp t,t)\\
&=\td a'(x)\pm \td b(x)\pm\int_0^tf(x\mp\tau,\tau)\,d\tau
+h(x,0)+\int_0^th_t(x\mp\tau,\tau)\,d\tau.
\end{aligned}
\end{equation}
By the substitution $x\mp\tau=z$, we have
\begin{equation}
\begin{aligned}
\pd_x&\int_0^tf(x\mp\tau,\tau)\,d\tau
=\pd_x\int_x^{x\mp t}f(z,\mp(z-x))(\mp dz)\\
&=\mp\{f(x\mp t,t)-f(x,0)\}-\int_x^{x\mp t}f_t(z,\mp(z-x))\,dz\\
&=\mp\{f(x\mp t,t)-f(x,0)\}\pm\int_0^tf_t(x\mp\tau,\tau)\,d\tau.
\end{aligned}
\end{equation}
Hence,
\begin{equation}
\begin{aligned}
\wh u^\pm_x(x,t)&=\td a''(x)\pm \td b'(x)
-\{f(x\mp t,t)-f(x,0)\}+\int_0^tf_t(x\mp\tau,\tau)\,d\tau\\
&\qquad+h_x(x,0)
\mp\{h_t(x\mp t,t)-h_t(x,0)\}\pm\int_0^th_{tt}(x\mp\tau,\tau)\,d\tau.
\end{aligned}
\end{equation}

In the expression \eqref{eq:F-H-byGmxietate} of $F$ and $H$,
the $\xi_x$ in $F$ appears only in the form $D_x\xi$.
Thus, $f_t$ is a polynomial in bounded functions $\Gm$, $\Gm_t$, $\Gm_{tt}$, $\xi$, $D_x\xi$, $\xi_t$, $\eta$, $\eta_t$, $\te$, $\pd_t(D_x\xi)$, $\xi_{tt}$, $\eta_{tt}$ and $\te_t$,
and is a polynomial of degree 1 in $\pd_t(D_x\xi)$, $\xi_{tt}$, $\eta_{tt}$ and $\te_t$. Also, $h_{tt}$ is a polynomial of degree 1 in $\xi_{tt}$.
Therefore,
\begin{equation}
\begin{aligned}
m_0(f_t)&\le K_1\{1+m_0(\pd_t(D_x\xi))+m_0(\xi_{tt})+m_0(\eta_{tt})+m_0(\te_t)\}\\
&\le2K_1\{1+m_0(\wh u^+_x)+m_0(\wh u^-_x)+m_0(\eta_{tt})+m_0(\te_t)\},\\
m_0(h_{tt})&\le K_1\{1+m_0(\xi_{tt})\}
\le2K_1\{1+m_0(\wh u^+_x)+m_0(\wh u^-_x)\}.
\end{aligned}
\end{equation}

From this inequality and the equality $\td a''(x)+h_x(x,0)=\pd_xD_x\xi(x,0)$, we see
\begin{equation}
\begin{aligned}
m_0(\wh u^\pm_x)(t)
&\le K_2\{1+\int_0^tm_0(f_t)(\tau)\,d\tau+\int_0^tm_0(h_{tt})\,d\tau\}\\
&\le K_3\{1+\int_0^t(m_0(\wh u^+_x)(\tau)+m_0(\wh u^-_x)(\tau)
+m_0(\eta_{tt})(\tau)+m_0(\te_t)(\tau))\,d\tau\}.
\end{aligned}
\end{equation}

And, from the expression \eqref{eq:O-eta-local-expression} of equation $\rm(O_\eta)$,
\begin{equation}
m_0(\eta_{tt})\le K_4\{1+m_0(\xi_{tt})+m_0(\te_{xt})+m_0(\pd_tD_x\xi)\}.
\end{equation}
Moreover, from equation $\rm(O_\te)$,
\begin{equation}
-D_x(D_x\te_t+\Gm_t(\xi,\te)+\Gm(\xi_t,\te)+\Psi_t)+\te^\perp_t-\Phi_t
\end{equation}
is a bounded function, so
\begin{equation}
\begin{aligned}
m_0(\te_t),m_0(\te_{tx})&\le K_5\{1+m_0(\Psi_t)+m_0(\Phi_t)\}
\le K_6\{1+m_0(\pd_tD_x\xi)+m_0(\xi_{tt})\}.
\end{aligned}
\end{equation}

Combining the above, we have
\begin{equation}
m_0(\wh u^+_x)(t)+m_0(\wh u^-_x)(t)
\le K_7\{1+\int_0^tm_0(\wh u^+_x)(\tau)+m_0(\wh u^-_x)(\tau)\,d\tau\}.
\end{equation}
Hence, $m_0(\wh u^+_x)(t)+m_0(\wh u^-_x)(t)$ is bounded by an exponential function in $t$.
Therefore, functions $\pd_tD_x\xi$, $\xi_{tt}$, $\eta_{tt}$, $\te_t$ and $\te_{xt}$ are bounded.

Next, we study the continuity of $\wh u^\pm_x$.
Since functions $f$ and $h_t$ are continuous and functions $f_t$ and $h_{tt}$ are bounded, $\wh u^\pm_x$ is continuous with respect to $t$.
For the continuity with respect to $x$,
we see that functions $\td a''$, $\td b'$, $f$, $h_t$, $h_x(x,0)$ and $\te_{tx}$ are continuous with respect to $x$,
and by a similar calculation to the above, we have
\begin{equation}
\begin{aligned}
\nr{\dl\wh u^\pm_x(x,t)}
&\le K_8\{O(\ep)+\int_0^t\nr{\dl\wh u^+_x(x,\tau)}+\nr{\dl\wh u^-_x(x,\tau)}
+\nr{\dl\eta_{tt}(x,\tau)}\,d\tau\},\\
\nr{\dl\eta_{tt}(x,t)}
&\le K_8\{O(\ep)+\nr{\dl\wh u^+_x(x,t)}+\nr{\dl\wh u^-_x(x,t)}\}.
\end{aligned}
\end{equation}
Here, $\dl\wh u^\pm_x(x,t)=\wh u^\pm_x(x+\ep,t)-\wh u^\pm_x(x,t)$ and
$\dl\eta_{tt}(x,t)=\eta_{tt}(x+\ep,t)-\eta_{tt}(x,t)$.
Therefore, $\nr{\dl\wh u^\pm_x(x,t)}$, $\nr{\dl\eta_{tt}(x,t)}\to0$ when $\ep\to0$.

Moreover, since $\wh u^\pm_t=\pm f(x\mp t,t)+h_t(x\mp t,t)$ and it is continuous, $\wh u^\pm$ is of class $C^1$, and so $D_x\xi$, $\xi_t\in C^1$, $\eta_{tt}\in C^0$ and $\te_t$, $\te_{tx}\in C^0$.
We don't know whether $\xi_x$ is of class $C^1$ at this stage,
but we know that $D_x^2\xi$ and $D_t^2\xi$ are continuous.
We also know that $D_xD_t\eta=D_x(D_x\te+\Psi)+D_x^2\xi$ is continuous, and so $\eta_{tx}$, $\eta_x$, $\gm_{tx}$ and $\gm_x$ are continuous.
Therefore, $\gm_{xt}=\gm_{tx}$ and $\eta_{xt}=\eta_{tx}$, and $D_t\gm_x=\Dt\gm_x=\Dx\gm_t=\Dx\eta=D_x\eta+\Gm(\gm_x-\xi,\eta)$.
Thus, the compatibility condition of initial data implies $D_t\xi=D_x\eta=D_t\gm_x$ at $t=0$, and we have $\gm_x=\xi$ by Proposition \ref{proposition:restore-base-single}.

Now we have $\xi_x=D_x\xi-\Gm(\xi,\xi)\in C^1$, and $\gm_x=\xi\in C^2$.
Also, $D_t^2\gm_t=D_t^2\eta$ is continuous, therefore $\gm\in C^3$.
\end{proof}

\section
{Existence of a long-time solution}
\label{section:long-time-existence}

Let $\gm(x,0)\in C^3(x)$ and $\gm_t(x,0)\in C^2(x)$ be initial values.
By Proposition \ref{proposition:short-time-C2-existence} and Proposition \ref{proposition:short-time-C3-exsitence}, the solution is extended as long as
\begin{equation}
M(\gm,\xi,\eta):=m_0(\Dx\xi)+m_0(\Dt\xi)+m_0(\eta)
\end{equation}
is bounded and $B(\gm,\gm_x)>B_0>0$.
We assume $B(\gm,\gm_x)>B_0>0$, and study the behaviour of $M(\gm,\xi,\eta)$.
We call the solution a $C^3$ {\em solution to the equation of motion} \eqref{eq:base-single}.
First, we prove the total energy preserving law.

\begin{proposition}
\label{proposition:olE-preserving}
The total energy $\ol E(\gm):=\Nr{\Dt\gm_x}^2+\Nr{\gm_t}^2+\Nr{\Dx\gm_x}^2$ of the $C^3$ solution to the equation of motion is preserved.
\end{proposition}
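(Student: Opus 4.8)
The plan is to establish $\ddt\ol E(\gm)=0$ by a direct computation: differentiate $\ol E$ in $t$, integrate by parts in the $x$-variable (no boundary terms, since curves are maps of $S^1$), substitute the equation of motion \eqref{eq:base-single}, and observe that everything cancels. The $C^3$-regularity of the solution established in Propositions \ref{proposition:short-time-C2-existence} and \ref{proposition:short-time-C3-exsitence} is exactly what makes all the covariant derivatives below continuous, so that differentiation under the integral sign and the integrations by parts are legitimate.

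Concretely, I would first differentiate term by term and use the torsion-free identity $\Dt\gm_x=\Dx\gm_t$ to write
\begin{equation*}
\frac12\ddt\ol E(\gm)
=\Ip{\gm_t,\Dt\gm_t}+\Ip{\Dx\gm_t,\Dt^2\gm_x}+\Ip{\Dx\gm_x,\Dt\Dx\gm_x}.
\end{equation*}
Integration by parts in $x$ turns the middle term into $-\Ip{\gm_t,\Dx\Dt^2\gm_x}$, and the commutation formula $\Dt\Dx\gm_x=\Dx\Dt\gm_x+R(\gm_t,\gm_x)\gm_x$ together with two integrations by parts (again using $\Dt\gm_x=\Dx\gm_t$ in between) turns the last term into $\Ip{\Dx^3\gm_x,\gm_t}+\Ip{\Dx\gm_x,R(\gm_t,\gm_x)\gm_x}$. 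This leaves
\begin{equation*}
\frac12\ddt\ol E(\gm)
=\Ip{\gm_t,\ \Dt\gm_t-\Dx\Dt^2\gm_x+\Dx^3\gm_x}+\Ip{\Dx\gm_x,R(\gm_t,\gm_x)\gm_x}.
\end{equation*}

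Next I would substitute \eqref{eq:base-single} in the form $\Dt\gm_t-\Dx\Dt^2\gm_x+\Dx^3\gm_x=\Psi-\Dx(\mu\gm_x)$. The Lagrange-multiplier term drops out, since $\Ip{\gm_t,\Dx(\mu\gm_x)}=-\Ip{\Dx\gm_t,\mu\gm_x}=-\frac12\int\mu\,\pd_t(\nr{\gm_x}^2)\,dx=0$ by the constraint $\nr{\gm_x}^2\equiv1$. It then remains to check that $\Ip{\gm_t,\Psi}+\Ip{\Dx\gm_x,R(\gm_t,\gm_x)\gm_x}=0$; writing $\Psi=R(\gm_x,\Dx\gm_x)\gm_x-R(\gm_x,\Dt\gm_x)\gm_t$, the $R(\gm_x,\Dt\gm_x)\gm_t$ contribution vanishes because $\Ip{R(\gm_x,\Dt\gm_x)\gm_t,\gm_t}=\Ip{R(\gm_t,\gm_t)\gm_x,\Dt\gm_x}=0$, and the two remaining curvature terms cancel by the pair symmetry and antisymmetry of the curvature tensor, since $\Ip{R(\gm_t,\gm_x)\gm_x,\Dx\gm_x}=\Ip{R(\gm_x,\Dx\gm_x)\gm_t,\gm_x}=-\Ip{R(\gm_x,\Dx\gm_x)\gm_x,\gm_t}$. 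Hence $\ddt\ol E(\gm)=0$.

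I do not expect a genuine obstacle here: once $\gm\in C^3$ is in hand the manipulations are justified and the computation is routine. The one place deserving attention is the curvature bookkeeping — keeping track of which symmetries of $R$ are invoked so that the leftover curvature terms really cancel — but that is a finite, mechanical check rather than a difficulty.
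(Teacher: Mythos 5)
Your computation is, at the formal level, exactly the paper's argument seen from the other end: differentiate $\ol E$, integrate by parts over $S^1$, kill the multiplier term with the constraint $\nr{\gm_x}^2\equiv1$, and cancel the two surviving curvature terms using the pair symmetry and antisymmetry of $R$. The algebra, including the curvature bookkeeping, is correct, and the final cancellation $\Ip{R(\gm_x,\Dx\gm_x)\gm_x,\gm_t}+\Ip{\Dx\gm_x,R(\gm_t,\gm_x)\gm_x}=0$ is the same one the paper performs in the form $\Ip{R(\xi,\Dx\xi)\xi,\eta}+\Ip{R(\eta,\xi)\xi,\Dx\xi}=0$.

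There is, however, one step that does not survive scrutiny at the stated regularity, and it is precisely the point your opening paragraph waves away. A $C^3$ solution has continuous derivatives of $\gm$ only up to total order three, so $\Dx^2\gm_x$ and $\Dt^2\gm_x$ are continuous but need not be differentiable in $x$. Consequently the integration by parts $-\Ip{\Dx^2\gm_x,\Dx\gm_t}=\Ip{\Dx^3\gm_x,\gm_t}$, the passage from $\Ip{\Dx\gm_t,\Dt^2\gm_x}$ to $-\Ip{\gm_t,\Dx\Dt^2\gm_x}$, and the substitution of \eqref{eq:base-single} \emph{term by term} all presuppose fourth-order derivatives that the solution is not known to possess; indeed equation \eqref{eq:base-single} itself only holds in the sense that the combination $\Dx(\Dt^2\gm_x-\Dx^2\gm_x-\mu\gm_x)$ exists (it equals $\Dx\te+\Dx\xi$, and $\te_x$ is continuous), not that $\Dx^3\gm_x$ and $\Dx\Dt^2\gm_x$ exist separately. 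The paper sidesteps this by running the computation through the coupled system \eqref{eq:base-coupled}: it substitutes $\Dt^2\xi=\Dx^2\xi+\te$ modulo $\xi$-components into $\Ip{\Dt\xi,\Dt^2\xi}$ and $\Dt\eta=\Dx\te+\Psi+\Dx\xi$ into $\Ip{\eta,\Dt\eta}$, so that only $\Dx^2\xi$ (a third derivative of $\gm$) and $\Dx\te$ ever need to be differentiated or integrated by parts. Your proof becomes correct verbatim for smooth solutions, and it can be repaired for $C^3$ solutions by never splitting $\Dt^2\gm_x-\Dx^2\gm_x-\mu\gm_x$ apart --- but that repair is exactly the reduction to the system form, so the decomposition is not optional here.
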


\begin{proof}
Each term of the right hand side of equation
\begin{equation}
\frac12\ddt\{\Nr{\Dt\gm_x}^2+\Nr{\gm_t}^2+\Nr{\Dx\gm_x}^2\}
=\Ip{\Dt\xi,\Dt^2\xi}+\Ip{\eta,\Dt\eta}+\Ip{\Dx\xi,\Dt\Dx\xi}
\end{equation}
becomes
\begin{equation}
\begin{aligned}
&\Ip{\Dt\xi,\Dt^2\xi}=\Ip{\Dt\xi,\Dx^2\xi+\te},\\
&\Ip{\eta,\Dt\eta}=\Ip{\eta,\Dx\te+\Psi+\Dx\xi}
=-\Ip{\Dx\eta,\te+\xi}+\Ip{\eta,R(\xi,\Dx\xi)\xi}\\
&\qquad=-\Ip{\Dt\xi,\te}+\Ip{R(\xi,\Dx\xi)\xi,\eta},\\
&\Ip{\Dx\xi,\Dt\Dx\xi}=\Ip{\Dx\xi,R(\eta,\xi)\xi+\Dx\Dt\xi}
=\Ip{R(\eta,\xi)\xi,\Dx\xi}-\Ip{\Dx^2\xi,\Dt\xi}.
\end{aligned}
\end{equation}
The sum of them is $0$.
\end{proof}

From this proposition, we have
\begin{equation}
\begin{aligned}
&\nr{\gm_t(x,t)}\le m_0(\eta)(t)
\le\Nr{\eta}+\Nr{\Dx\eta}
=\Nr{\eta}+\Nr{\Dt\xi}
\le2\ol E(\gm),\\
&\operatorname{distance}(\gm(x,t),\gm(0,0))
\le1+\int_0^t\nr{\gm_t(0,\tau)}\,d\tau
\le1+2\ol E(\gm)t,
\end{aligned}
\end{equation}
and get the following proposition.

\begin{proposition}
The $C^3$ solution is of finite velocity, and in particular the curvature tensor and its covariant derivatives are bounded along the solution on finite time interval $0\le t<T$.
\end{proposition}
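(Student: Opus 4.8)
The claim follows from the energy conservation of Proposition \ref{proposition:olE-preserving} together with the two displayed estimates immediately preceding the statement; what remains is only to package them. First I would record that, along a $C^3$ solution, $\ol E(\gm)$ is a finite constant by Proposition \ref{proposition:olE-preserving}. Hence the inequality $\nr{\gm_t(x,t)}\le m_0(\eta)(t)\le\Nr{\eta}+\Nr{\Dx\eta}=\Nr{\eta}+\Nr{\Dt\xi}\le 2\ol E(\gm)$ holds uniformly in $x$ and in $t\in[0,T)$, with a bound independent of $t$. This uniform bound on $\gm_t$ is exactly the assertion that the solution is of finite velocity.

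Next I would use the distance estimate $\operatorname{distance}(\gm(x,t),\gm(0,0))\le 1+2\ol E(\gm)\,t$ (the constant $1$ being a bound for the length of the curve $\gm(\cdot,t)$, which is $1$ since $\nr{\gm_x}\equiv1$, and $2\ol E(\gm)\,t$ a bound for the displacement $\int_0^t\nr{\gm_t(0,\tau)}\,d\tau$). For every $t\in[0,T)$ this places the image $\gm(S^1,t)$ inside the closed metric ball $\ol B:=\{p\in M\mid\operatorname{distance}(p,\gm(0,0))\le 1+2\ol E(\gm)\,T\}$, a set not depending on $t$. Since $(M,g)$ is complete, $\ol B$ is compact by the Hopf--Rinow theorem.

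Finally, the curvature tensor $R$ and each of its covariant derivatives $\DD R,\DD^2R,\dots$ are smooth tensor fields on $M$, so their pointwise norms are bounded on the compact set $\ol B$. Restricting to the solution yields the stated bounds on $\nr{R}$, $\nr{\DD R}$, \dots over the whole time interval $[0,T)$. The only point that needs attention in this argument is the passage from ``the solution sweeps out a bounded metric region'' to ``that region has compact closure'', i.e. the use of geodesic completeness of $(M,g)$; granting that, no further work is required and there is no substantial obstacle.
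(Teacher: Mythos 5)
Your proposal is correct and follows the paper's own route: the paper derives exactly the two displayed estimates from the energy conservation of Proposition \ref{proposition:olE-preserving} and lets the proposition follow immediately, leaving implicit the step you make explicit (compactness of the bounded metric ball via completeness and Hopf--Rinow, hence boundedness of $R$ and its covariant derivatives there). No gap; your write-up just supplies the details the paper omits.
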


Next, we estimate $M(\gm,\xi,\eta)=m_0(\xi_x)+m_0(\xi_t)+m_0(\eta)$.
We cannot directly use the estimation for short-time existence, because the coordinate system changes.
Since $\eta$ is bounded as above, we only have to show boundedness of $\xi_x$ and $\xi_t$.

\begin{lemma}
\label{lemma:m0(xix)m0(xit)-bounded}
We assume $B(\gm,\gm_x)\ge B_0>0$ on a finite time interval $0\le t<T$.
Then, $m_0(\xi_x)+m_0(\xi_t)$ is bounded on the time interval.
\end{lemma}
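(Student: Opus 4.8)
\emph{Plan.} The idea is to reduce the whole estimate to a \emph{linear} Gronwall inequality for the single scalar $e:=\tfrac12(\nr v^2+\nr w^2)$, where $v:=\Dx\xi$ and $w:=\Dt\xi$. Since the total energy is conserved (Proposition \ref{proposition:olE-preserving}), on $[0,T)$ the quantities $\Nr v^2+\Nr w^2=\Nr{\Dx\xi}^2+\Nr{\Dt\xi}^2\le\ol E(\gm)$ and $m_0(\eta)$ are bounded, and (by the preceding propositions) so are the curvature tensor $R$ and the frame data $\Gm$ along the solution; I let $C$ denote a constant depending only on these, on $T$ and on $B_0^{-1}$. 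Because $\xi_x=v-\Gm(\xi,\xi)$ and $\xi_t=w-\Gm(\eta,\xi)$ with $\Gm$ bounded along the solution, it suffices to bound $m_0(e)$. Throughout, $\nr\xi^2\equiv1$ yields $g(\xi,v)=g(\xi,w)=0$, which is the structural input that makes everything close.

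\emph{Step 1: an $L_2$ bound for $\te$.} First I would rewrite $\mathrm{(O_\te)}$ as $-\Dx(\Dx\te+\Psi)+\te^\perp=\Phi$ with $\Psi=R(\xi,v)\xi-R(\xi,w)\eta$ and $\Phi=(\nr w^2-\nr v^2)\xi-R(\xi,\eta)\eta$. As $B(\gm,\gm_x)\ge B_0>0$ and $\Nr{\Dx\xi}$ is bounded, Lemma \ref{lemma:O-te-base} applies and gives $m_0(\te),m_0(\Dx\te)\le C(\Nr\Psi+\Nr\Phi)$; it is essential here to use the $L_2$ norm. One has $\Nr\Psi\le C$, since each summand is pointwise $\le C\nr v$ or $\le C\nr w$ and $\Nr v,\Nr w$ are bounded, while
\[
\Nr\Phi^2\le C\!\int(\nr v^2-\nr w^2)^2\,dx+C\!\int\nr\eta^4\,dx
\le C\,m_0(e)\,(\Nr v^2+\Nr w^2)+C\le C(1+m_0(e)),
\]
so that $m_0(\te)\le C\bigl(1+m_0(e)^{1/2}\bigr)$. (Applying instead the $m_0$-version of the estimate for $\mathrm{(O_\te)}$ would only give $m_0(\te)\le C(1+m_0(e))$, which is too weak for Step 3.)

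\emph{Step 2: a first order symmetric system.} Using $\mathrm{(W_\xi)}$ in the form $\Dt^2\xi-\Dx^2\xi=(\nr v^2-\nr w^2)\xi+\te^\perp$ and the identity $\Dt\Dx\xi-\Dx\Dt\xi=R(\eta,\xi)\xi$, I would read off
\[
\Dt v=\Dx w+R(\eta,\xi)\xi,\qquad\Dt w=\Dx v+(\nr v^2-\nr w^2)\xi+\te^\perp.
\]
Then, differentiating $e$ and $p:=g(v,w)$ and using $\pd_x g(a,b)=g(\Dx a,b)+g(a,\Dx b)$ together with $g(\xi,v)=g(\xi,w)=0$ — the relation that makes the quadratic term $(\nr v^2-\nr w^2)\xi$ disappear from both derivatives — one obtains
\[
\pd_t e=\pd_x p+S,\qquad \pd_t p=\pd_x e+S',
\]
with $S=g(R(\eta,\xi)\xi,v)+g(\te,w)$ and $S'=g(R(\eta,\xi)\xi,w)+g(\te,v)$. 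Since $m_0(v),m_0(w)\le(2m_0(e))^{1/2}$, Step 1 gives $m_0(S)(t)+m_0(S')(t)\le C\bigl(m_0(v)+m_0(w)\bigr)\bigl(1+m_0(\te)\bigr)\le C\bigl(1+m_0(e)\bigr)$.

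\emph{Step 3: characteristics and Gronwall.} The functions $e\pm p$ are nonnegative (because $\nr p\le e$) and of class $C^1$ (because $\xi\in C^2$), and they satisfy $\pd_t(e\pm p)\mp\pd_x(e\pm p)=S\pm S'$; integrating along the characteristics $x\mp t=\mathrm{const}$ on $\mathbf R/\mathbf Z$ and taking the supremum in $x$ leads to
\[
m_0(e)(t)\le m_0(e)(0)+m_0(p)(0)+\int_0^t\bigl(m_0(S)+m_0(S')\bigr)(\tau)\,d\tau
\le C+C\!\int_0^t\bigl(1+m_0(e)(\tau)\bigr)\,d\tau,
\]
and Gronwall's inequality then bounds $m_0(e)$, hence $m_0(\xi_x)+m_0(\xi_t)$, on $[0,T)$. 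The main obstacle is precisely the quadratic terms $\nr{\Dx\xi}^2$, $\nr{\Dt\xi}^2$: treated naively they would produce a quadratic Gronwall inequality and hence only local control, and the way around it is the cancellation in Step 2 forced by $\nr\xi^2\equiv1$, reinforced by the $L_2$-type bound for $\te$ in Step 1 that keeps the sources at most linear in $m_0(e)$.
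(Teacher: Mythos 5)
Your argument is correct and is essentially the paper's own proof in different notation: your quantities $e\pm p=\tfrac12\nr{\Dx\xi\pm\Dt\xi}^2$ integrated along the characteristics $x\mp t=\mathrm{const}$ are exactly the paper's $\tfrac12\nr{\wh\xi^\pm}^2$, and both arguments rest on the same two points — the $L_2$-version of the estimate for $\rm(O_\te)$ (Lemma \ref{lemma:O-te-base}) combined with energy conservation to keep $m_0(\te)$ linear in $m_0(\Dx\xi)+m_0(\Dt\xi)$, and the disappearance of the quadratic term $(\nr{\Dx\xi}^2-\nr{\Dt\xi}^2)\xi$ by orthogonality to $\xi$ — followed by Gronwall. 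No gap; nothing further needed.
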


\begin{proof}
Let $\xi^\pm(x,t):=\Dx\xi(x,t)\pm\Dt\xi(x,t)$
and $\wh\xi^\pm(x,t):=\xi^\pm(x\mp t,t)$.
We have

\begin{equation}
\begin{aligned}
\Dt\wh\xi^\pm&=\mp\Dx\xi^\pm+\Dt\xi^\pm
=\mp\Dx^2\xi-\Dx\Dt\xi+\Dt\Dx\xi\pm\Dt^2\xi\\
&=\pm\{(\nr{\Dx\xi}^2-\nr{\Dt\xi}^2)\xi+\te^\perp\}+R(\eta,\xi)\xi.
\end{aligned}
\end{equation}
We used \eqref{eq:xi-wave-rep} in the last equality.
Noting that $\wh\xi^\pm$ is orthogonal to $\xi$, we have
\begin{equation}
\pd_t\nr{\wh\xi^\pm}^2=2g(\wh\xi^\pm,\Dt\wh\xi^\pm)
=\pm2g(\wh\xi^\pm,\te^\perp).
\end{equation}

Since $\ol E(\gm)$ is constant by Proposition \ref{proposition:olE-preserving},
$\Nr{D_x\xi}$ is bounded.
Therefore, by Lemma \ref{lemma:O-te-base},
\begin{equation}
\begin{aligned}
m_0(\te)&\le C(\Nr{\Psi}+\Nr{\Phi})\\
&\le C((m_0(\Dx\xi)+m_0(\Dt\xi))
+(\Nr{\Dt\xi}m_0(\Dt\xi)+\Nr{\Dx\xi}m_0(\Dx\xi)+1))\\
&\le C(1+\ol E(\gm))(1+m_0(\Dx\xi)+m_0(\Dt\xi)).
\end{aligned}
\end{equation}
Therefore
\begin{equation}
\ddt m_0(\wh\xi^\pm)^2\le C(1+m_0(\wh\xi^\pm)^2),
\end{equation}
and $m_0(\wh\xi^\pm)$, $m_0(\Dx\xi)$ and $m_0(\Dt\xi)$ increase at most exponential order, and so are bounded.
\end{proof}

From the above, we can prove the long-time existence of solutions.

\begin{theorem}
\label{theorem:long-time-C3-existence}
Assume that the Riemannian manifold $(M,g)$ is complete.
The $C^3$ solution $\gm$ can be extended to an infinite time solution, provided that $\inf_{0\le t<T}B(\gm)>0$ for any finite time $T$.
\end{theorem}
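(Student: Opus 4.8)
The plan is a standard continuation argument. Let $[0,T^*)$ be the maximal interval on which the $C^3$ solution $\gm$ (given by Propositions \ref{proposition:short-time-C2-existence} and \ref{proposition:short-time-C3-exsitence}) is defined; if $T^*=\infty$ there is nothing to prove, so suppose $T^*<\infty$ and derive a contradiction by producing a uniform lower bound on the length of the existence interval obtained by restarting the short-time procedure at any time close to $T^*$.

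First I would collect the a priori bounds on $[0,T^*)$. By Proposition \ref{proposition:olE-preserving} the total energy $\ol E(\gm)$ is constant, so $m_0(\gm_t)\le2\ol E(\gm)$ and, by the finite-velocity estimate already derived, $\gm(x,t)$ stays in a bounded subset of $M$; since $(M,g)$ is complete, the closure $K$ of this subset is compact. Hence the curvature tensor $R$, a finite family of tubular neighborhoods covering $K$, the associated modified Christoffel symbols $\Gm$, and all the fixed geometric data are uniformly bounded, with a uniform positive lower bound on the radii of the tubular neighborhoods. By the hypothesis applied with $T=T^*$, $B_0:=\inf_{0\le t<T^*}B(\gm)>0$. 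By Lemma \ref{lemma:m0(xix)m0(xit)-bounded}, $m_0(\xi_x)+m_0(\xi_t)$ is bounded on $[0,T^*)$; together with the bound on $m_0(\eta)$ this bounds $M(\gm,\xi,\eta)$ uniformly on $[0,T^*)$.

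Next I would restart the short-time existence machinery at a time $t_0\in[0,T^*)$. The data $\{\gm(\cdot,t_0),\gm_t(\cdot,t_0)\}$ is of class $C^3(x)\times C^2(x)$, is arc-length parametrized hence a regular curve, satisfies $g(\Dx\gm_t,\gm_x)=0$ (this follows from $\nr{\gm_x}^2\equiv1$ together with $\Dt\gm_x=\Dx\gm_t$), satisfies $B(\gm(\cdot,t_0),\gm_x(\cdot,t_0))\ge B_0$, and — since $\xi=\gm_x$ and $\Dt\gm_x=\Dx\gm_t$ hold along the solution — satisfies the compatibility conditions of Proposition \ref{proposition:short-time-C3-exsitence}. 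The decisive point is that the guaranteed existence time in the short-time construction (the $T_6$ of Subsection \ref{subsection:short-time}) depends only on $m_0$-type norms of the initial data (essentially $m_0(\gm(\cdot,t_0))$, $m_0(\xi_x(\cdot,t_0))$, $m_0(\xi_t(\cdot,t_0))$, $m_0(\eta(\cdot,t_0))$), on $B_0^{-1}$, and on the fixed geometric data near $\gm(\cdot,t_0)$ — all of which are bounded uniformly in $t_0\in[0,T^*)$ by the previous paragraph. That the solution keeps $B(\gm,\xi)\ge B_0/2$ over the short interval follows from the $L_2$-continuity of $B$ (Lemma \ref{lemma:B(gm,xi)-estimate}) and the uniform control of the $t$-derivatives of $\Gm$ and $\xi$, again yielding a uniform time span. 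Therefore there is a single $\delta>0$, independent of $t_0$, such that the (automatically $C^3$) solution issued from time $t_0$ exists on $[t_0,t_0+\delta]$; choosing $t_0\in(T^*-\delta,T^*)$ and gluing via uniqueness extends $\gm$ to $[0,t_0+\delta]$ with $t_0+\delta>T^*$, contradicting maximality.

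The main obstacle in making this rigorous is the bookkeeping across the changing coordinate frames: one must verify that the constants in the short-time existence theorem genuinely depend only on the uniformly bounded quantities listed above, and not, for instance, on higher $x$-derivatives of $\xi$ or on the particular chart. This is exactly why the $C^1$-based formulation of Subsection \ref{subsection:short-time} — in which the existence time is controlled by $m_0(\td a,(\td a)',\td b)$, by $r_1$, and by the tubular neighborhood alone, with $C^3$ regularity recovered a posteriori from Proposition \ref{proposition:short-time-C3-exsitence} — is essential here.
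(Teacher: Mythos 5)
Your proposal is correct and follows essentially the same route as the paper: the paper's proof is simply the observation that, by the preamble to Section \ref{section:long-time-existence}, the solution extends as long as $M(\gm,\xi,\eta)$ is bounded and $B(\gm,\gm_x)\ge B_0>0$, and these are exactly the bounds you establish via Proposition \ref{proposition:olE-preserving}, the finite-velocity estimate, and Lemma \ref{lemma:m0(xix)m0(xit)-bounded}. Your write-up merely makes explicit the uniform-restart bookkeeping that the paper leaves implicit.
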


\begin{proof}
Let $T<\infty$ be the supremum of the existence time of the solution.
Then $B(\gm)\ge B_0>0$ on $0\le t<T$ by the assumption,
and $M(\gm,\xi,\eta)$ is bounded on $0\le t<T$ by Lemma \ref{lemma:m0(xix)m0(xit)-bounded}, hence the solution extends over $T$.
\end{proof}

To make sure, we study $C^\infty$ solutions.
For that, we estimate derivatives of $\te$.

\begin{lemma}
\label{lemma:te-Cn-estimate}
Let $n\ge2$.
If $\gm\in C^n$, then $\te$, $\te_x\in C^{n-2}$, and $C^{n-2}$-norms of $\te$ and $\te_x$ are bounded from above by a constant depending on $C^n$-norm of $\gm$.
\end{lemma}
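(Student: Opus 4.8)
The plan is to read off the regularity of $\te$ from the linear ODE $\rm(O_\te)$, $-D_x(D_x\te+\Psi)+\te^\perp=\Phi$, by an induction on $n$. It is convenient to prove the slightly more general statement: if $\gm\in C^n$, $B(\gm)>0$, and $f,h\in C^{n-2}$, then the solution $v$ of $-D_x(D_xv+f)+v^\perp=h$ (unique by Lemma~\ref{lemma:O-te-base}) satisfies $v,v_x\in C^{n-2}$, with $C^{n-2}$-norm bounded by a constant depending only on the $C^n$-norm of $\gm$, the $C^{n-2}$-norms of $f$ and $h$, and $B(\gm)^{-1}$. Since the $C^3$ solution is here of class $C^n$ and satisfies \eqref{eq:base-single} with $\xi=\gm_x$, $\eta=\gm_t$, we have $\xi,\eta\in C^{n-1}$ and $\Gm=\Gm(\gm)$, $R=R(\gm)$ of class $C^n$; hence $D_x\xi=\xi_x+\Gm(\xi,\xi)$ and $D_t\xi=\xi_t+\Gm(\eta,\xi)$ lie in $C^{n-2}$, and so do $\Psi$ and $\Phi$, with $C^{n-2}$-norms controlled by the $C^n$-norm of $\gm$. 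The lemma is then the case $f=\Psi$, $h=\Phi$.

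The base case $n=2$ is exactly Corollary~\ref{corollary:O-te-t-dependence} together with the $m_0$-bound of Lemma~\ref{lemma:O-te-base-C1}. For the inductive step, assume the general statement for $n-1$ and let $\gm\in C^n$ with $n\ge3$. Applying $\partial_x$, respectively $\partial_t$, to $-D_x(D_xv+f)+v^\perp=h$, and using the commutation rules $\partial_xD_x=D_x\partial_x+\Gm(\xi_x,\cdot)$ and $\partial_tD_x=D_x\partial_t+\Gm_t(\xi,\cdot)+\Gm(\xi_t,\cdot)$, one checks that $v_x$ and $v_t$ each solve an equation of the same shape $-D_x(D_xw+\td f)+w^\perp=\td h$, in which $\td f$ and $\td h$ are universal polynomial expressions in $f,h$ and their first $x$- and $t$-derivatives, in $\Gm,R,\xi,\eta$ and their first derivatives, and in $v$ and $D_xv$. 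By the inductive hypothesis applied to $\gm\in C^{n-1}$ (with the same $f,h$, which lie in $C^{n-3}$) we already know $v,v_x\in C^{n-3}$, hence $D_xv\in C^{n-3}$, so $\td f,\td h\in C^{n-3}$ with $C^{n-3}$-norms controlled by the $C^n$-norm of $\gm$, the $C^{n-2}$-norms of $f,h$, and $B(\gm)^{-1}$. Applying the inductive hypothesis once more, to these two auxiliary equations, gives $v_x,v_t\in C^{n-3}$ together with $v_{xx}$ and $v_{xt}=v_{tx}$ in $C^{n-3}$; since $v$ and $v_x$ then have all their first partials in $C^{n-3}$, they are of class $C^{n-2}$, and the quantitative bound follows by tracking the estimate of Lemma~\ref{lemma:O-te-base-C1} through each invocation.

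The step I expect to be the main obstacle is the differentiability bookkeeping, and in particular the care needed to keep $\Psi$ — and, in the auxiliary equations, the term $f$ — \emph{inside} the outer operator $D_x$ throughout: if one rewrote $\rm(O_\te)$ in the expanded form $-D_x^2\te+\te^\perp=D_x\Psi+\Phi$, then differentiating once would produce $\partial_x(D_x\Psi)$ and hence a spurious extra $x$-derivative of $\Psi$, and already for $\gm\in C^2$ the right-hand side $D_x\Psi$ of the expanded equation fails to exist. This is exactly why equation $\rm(O_\te)$, and the equation of Lemma~\ref{lemma:O-te-base}, are written in the form $-D_x(D_xu+f)+u^\perp=h$, and why no $x$- or $t$-differentiation is permitted in the borderline case $n=2$. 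A secondary, purely notational point is that $\partial_t$ does not commute with $D_x$, so the lower-order commutator terms $\Gm_t(\xi,\cdot)+\Gm(\xi_t,\cdot)$ must be carried along; they lose no derivatives and are absorbed into $\td f$ and $\td h$.
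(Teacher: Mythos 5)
Your proposal is correct and follows essentially the same route as the paper: both proofs proceed by induction, differentiate the equation $-D_x(D_x\te+\Psi)+\te^\perp=\Phi$ while keeping $\Psi$ (resp.\ $f$) inside the outer $D_x$ so that no derivative of $\Psi$ is lost, observe that the differentiated quantity satisfies an equation of the same form with data controlled by the inductive hypothesis, and then invoke the base estimate of Lemma~\ref{lemma:O-te-base-C1} / Corollary~\ref{corollary:O-te-t-dependence}. The only difference is bookkeeping: the paper applies the full string $\Dx^p\Dt^q$ with $p+q=n-2$ at once and controls commutators via $R(\xi,\eta)$ and the $O(p,q)$ notation, whereas you peel off one ordinary partial derivative at a time with $\Gm$-commutators (note your $\pd_x D_x$ rule should also carry the term $(\pd_x\Gm)(\xi,\cdot)$, which is harmless).
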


\begin{proof}
By Proposition \ref{proposition:O-te-sol-and-diff}, the statement is true for $n=2$.
We assume that the statement is true for less than $n$ and prove that the statement is true for $n$.
If $\gm\in C^n$, then $\Psi$, $\Phi\in C^{n-2}$.

We denote by $O(p,q)$ a polynomial in functions $\pd_x^i\pd_t^j\gm$ $(i+j\le n)$ and $\pd_x^i\pd_t^j\te$ $(i\le p,j\le q)$.
Since $\Dx\Dt u-\Dt\Dx u=R(\xi,\eta)u$, changing order of $\Dx$ and $\Dt$ in a operation $p$ times $\Dx$ and $q$ times $\Dt$ leads difference $O(p-1,q-1)$.

Let $p+q=n-2$.
Then, by the assumption of induction, $O(p-1,q)$ and $O(p+1,q-1)$ are continuous and bounded.
And,
\begin{equation}
\begin{aligned}
&\Dx^p\Dt^q(\te^\perp)-(\Dx^p\Dt^q\te)^\perp
=\Dx^p\Dt^q(\te-g(\te,\xi)\xi)-(\Dx^p\Dt^q\te-g(\Dx^p\Dt^q\te,\xi)\xi)\\
&\qquad=-\Dx^p\Dt^q(g(\te,\xi)\xi)+g(\Dx^p\Dt^q\te,\xi)\xi
=O(p-1,q)+O(p,q-1),\\
&\Dx^p\Dt^q(\Dx^2\te)-\Dx^2(\Dx^p\Dt^q\te)
=O(p+1,q-1),\\
&\Dx^p\Dt^q\Dx\Psi-\Dx^{p+1}\Dt^q\Psi
=O(0,0),
\end{aligned}
\end{equation}
provided that the term $O(p-1,q)$ and $O(p,q-1)$ do not appear when $p=0$ and $q=0$, respectively.
Hence,
\begin{equation}
\begin{aligned}
&\Dx(\Dx\Dx^p\Dt^q\te+\Dx^p\Dt^q\Psi)+(\Dx^p\Dt^q\te)^\perp
=\Dx^p\Dt^q\Phi+O(p-1,q)+O(p+1,q-1).
\end{aligned}
\end{equation}
Therefore, by Corollary \ref{corollary:O-te-t-dependence},
$\Dx^p\Dt^q\te$ and $\Dx^{p+1}\Dt^q\te$ are continuous and bounded.
That is, $\te$ and $\te_x$ are of class $C^{n-2}$.
\end{proof}

\begin{theorem}
\label{theorem:long-time-Cinfty-existence}
If the initial data satisfies $\gm(x,0)\in C^\infty(x)$ and $\gm_t(x,0)\in C^\infty(x)$,
then the $C^3$ solution is of class $C^\infty$.
\end{theorem}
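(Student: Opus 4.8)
The plan is to prove, by induction on $n\ge3$, that on every interval $[0,T)$ on which the $C^3$ solution exists it is in fact of class $C^n$; letting $n\to\infty$ then gives $\gm\in C^\infty$. The base case $n=3$ is Proposition \ref{proposition:short-time-C3-exsitence} together with its extension, Theorem \ref{theorem:long-time-C3-existence}. Throughout I keep the notation $\xi=\gm_x$, $\eta=\gm_t$ and let $\te$ be the solution of $\rm(O_\te)$ attached to $(\gm,\xi,\eta)$ by the decomposition; recall that $\gm_x=\xi$ and $\gm_t=\eta$ (Proposition \ref{proposition:restore-base-single}), so $D_x=\Dx$, $D_t=\Dt$ and $\Dt\xi=\Dx\eta$.

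For the inductive step, suppose $\gm\in C^n$. First I would apply Lemma \ref{lemma:te-Cn-estimate} to obtain $\te,\te_x\in C^{n-2}$; consequently $\Psi$, $\Phi$ and all the forcing terms of $\rm(W_\xi)$ lie in $C^{n-2}$, the $\te$-contributions moreover carrying one extra bounded $x$-derivative. Then, following the $C^3$-bootstrap in the proof of Proposition \ref{proposition:short-time-C3-exsitence}, I would differentiate the three coupled equations to exactly the needed orders and close a system of integral inequalities for the top-order derivatives: the characteristic quantities $\wh u^{\pm}$ used there (equivalently the $\wh\xi^{\pm}$ of Lemma \ref{lemma:m0(xix)m0(xit)-bounded}), built from $\Dx\xi\pm\Dt\xi$ along the null lines $x\mp t=\mathrm{const}$, satisfy transport equations whose right-hand side is made of $\te^\perp$ and curvature terms; their derivatives up to order $n-1$ are controlled by a Gronwall argument in which the time derivatives of $\eta$ are handled through $\rm(O_\eta)$ and those of $\te$ through $\rm(O_\te)$ differentiated in $t$ — the latter, being second order in $x$, gaining two $x$-derivatives whenever it is re-solved via Lemma \ref{lemma:O-te-base-C1}. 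As in Section \ref{subsection:short-time}, these a priori bounds are legitimate because they hold along the sequence produced by the contraction map, and continuity of the top-order derivatives then follows from the difference-quotient estimate at the end of the proof of Proposition \ref{proposition:short-time-C3-exsitence}. One reads off $\xi\in C^{n}$, then $\eta\in C^{n}$ from $\Dx\eta=\Dt\xi$ together with $\rm(O_\eta)$, and finally $\gm\in C^{n+1}$ from $\gm_x=\xi$ and $\gm_t=\eta$.

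The step I expect to be the main obstacle is the closing of the induction, that is, genuinely gaining one full derivative at each stage. Equation $\rm(W_\xi)$ is hyperbolic and does not regularize, so the crude chain $\gm\in C^n\Rightarrow\te\in C^{n-2}\Rightarrow\xi\in C^{n-1}$ produces no improvement; the gain has to be extracted from the interaction of the wave propagator of $\rm(W_\xi)$ with the spatial second-order ODE $\rm(O_\te)$, which is precisely the mechanism already operating in Proposition \ref{proposition:short-time-C3-exsitence}. The delicate bookkeeping is to keep track of which derivatives of $\te$, $\xi$ and $\eta$ are actually available — in particular the extra spatial regularity $\te_x\in C^{n-2}$ is indispensable — and to check that the resulting Gronwall system for the top-order quantities closes rather than losing a derivative. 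Once $\gm\in C^n$ has been established for every $n$, $\gm\in C^\infty$ follows at once.
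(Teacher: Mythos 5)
Your outline follows the paper's own proof in all essentials: the same induction on $n$, the same appeal to Lemma \ref{lemma:te-Cn-estimate} to secure $\te,\te_x\in C^{n-2}$, the same passage to characteristic variables $\wh u^\pm$ for $u=\pd_x^p\pd_t^q\xi$ with $p+q=n-2$ closed by a Gronwall inequality in which the top-order derivatives of $\xi$, $\eta$ and $\te$ are re-expressed through $\rm(W_\xi)$, $\rm(O_\eta)$ and $\rm(O_\te)$, and the same final recovery of $\Dt^n\eta$ and hence of $\gm\in C^{n+1}$. You also correctly locate the derivative gain in the interaction of the d'Alembert representation with the spatial second-order ODE for $\te$, which is precisely the mechanism the paper exploits.
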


\begin{proof}
Since we know that $\Gm_t$ is continuous, we consider a hyperbolic equation for $u=\xi$ using
\begin{equation}
\begin{aligned}
&f(x,t)
=-\pd_t(\Gm(\eta,\xi))+\pd_x(\Gm(\xi,\xi))
-\Gm(\eta,\Dt\xi)+\Gm(\xi,\Dx\xi)\\
&\qquad\qquad\qquad+(\nr{\Dx\xi}^2-\nr{\Dt\xi}^2)\xi
+\te^\perp,\\
&h(x,t)=0
\end{aligned}
\end{equation}
instead of equation \eqref{eq:F-H-byGmxietate}.
Let $n\ge 3$, and consider on the finite time interval $0\le t<T$.
We will show boundedness of the $C^{n+1}$-norm of $\gm$, assuming boundedness of the $C^n$-norm of $\gm$.

Assume that the $C^n$-norm of $\gm$ is bounded.
From the expression of the equation and Lemma \ref{lemma:te-Cn-estimate}, $f$ is of class $C^{n-2}$.
Let $p+q=n-2$ and consider $\pd_x^p\pd_t^q\xi$.
Since $\pd_x^p\pd_t^{q+2}\xi-\pd_x^{p+2}\pd_t^q\xi=\pd_x^p\pd_t^qf\in C^0$,
it holds that $\pd_x^p\pd_t^{q+2}\xi-\pd_x^{p+q+2}\xi\in C^0$
or
$\pd_x^p\pd_t^{q+2}\xi-\pd_x^{p+q+1}\pd_t\xi\in C^0$.
Moreover, we have
$\pd_x^{p+q+2}\xi(x,0)$, $\pd_x^{p+q+1}\pd_t\xi(x,0)\in C^\infty(x)$ from the assumption,
so we get
$\pd_x^p\pd_t^q\xi(x,0)\in C^2(x)$ and
$\pd_x^p\pd_t^q\pd_t\xi(x,0)\in C^1(x)$.

Therefore, $u=\pd_x^p\pd_t^q\xi$ is a $C^1$ solution to the integral wave equation
\begin{equation}
2u
=a(x+t)+a(x-t)+\int_{x-t}^{x+t}b(y)\,dy
+\int_0^t\int_{x-(t-\tau)}^{x+(t-\tau)}\pd_x^p\pd_t^qf(y,\tau)\,dyd\tau
\end{equation}
with initial values
$a(x)=\pd_x^p\pd_t^q\xi(x,0)\in C^2(x)$
and $b(x)=\pd_x^p\pd_t^q\pd_t\xi(x,0)\allowbreak\in C^1(x)$.

And, we can calculate as follows, in the same way as the wave equation for $\xi$.
Let $u^\pm(x,t)=u_x(x,t)\pm u_t(x,t)$ and $\wh u^\pm(x,t)=u^\pm(x\mp t,t)$.
By \eqref{eq:vpm}, we have
\begin{equation}
\wh u(x,t)
=a'(x)\pm b(x)\pm\int_0^t\pd_x^p\pd_t^qf(x\mp\tau,\tau)\,d\tau.
\end{equation}
From this, $\wh u_t(x,t)=\pd_x^p\pd_t^qf(x\mp t,t)\in C^0$.
Besides,
\begin{equation}
\wh u_x(x,t)
=a''(x)\pm b'(x)\pm\int_0^t(\pd_x^p\pd_t^qf)_x(x\mp\tau,\tau)\,d\tau,
\end{equation}
and there is a positive constant $K$ depending only on $C^n$-norm of $\gm$ such that
\begin{equation}
m_0((\pd_x^p\pd_t^qf)_x)
\le K\{1+m_0(\pd_x^2u)+m_0(\pd_x\pd_tu)+m_0(\pd_t^2u)
+m_0(\pd_x^p\pd_t^q\te_x)\}.
\end{equation}
Hence $m_0(\wh u^\pm_x)$ is bounded and $u$ is $C^2$-bounded.
And, we can show $u\in C^2$ in a similar way.
Therefore, $\xi\in C^n$.

Here, $\pd_t^{n-1}\Psi$ and $\pd_t^{n-1}\Phi$ contain $\pd_t^n\xi$ or less for $\xi$,
but contain only $\pd_t^{n-1}\eta$ or less for $\eta$,
hence $\pd_t^{n-1}\Psi$, $\pd_t^{n-1}\Phi\in C^0$.
Then, by a similar manner as Lemma \ref{lemma:te-Cn-estimate},
we see $\pd_t^{n-1}\te_x\in C^0$.
Therefore, $\Dt^n\eta=\Dt^{n-1}(\Dx\te+\Psi)+\Dt^{n-1}\Dx\xi\in C^0$.
Now we conclude that derivatives of $\gm$ of order $n+1$ are continuous and bounded,
that is, the induction holds.
\end{proof}


\end{document}